\numberwithin{equation}{section}
\newtheorem{theorem}{Theorem}[section]
\newtheorem{lemma}[theorem]{Lemma}
\newtheorem{assumption}{Assumption}
\newtheorem{remark}{Remark}
\newtheorem*{notation}{Notation}
\newcommand{\norm}[1]{\left\Vert#1\right\Vert}
\newcommand{\keywords}[1]{\textbf{Keywords}:#1}
\begin{document}
\date{ }

\title{Non-relativistic limit of the Euler-HMP$_N$ approximation model arising in radiation hydrodynamics}
\maketitle

\vspace{.1in}
\centerline{
Zhiting Ma \footnote{School of Mathematical Sciences,
Peking University, Beijing, China.} 
and Wen-An Yong \footnote{Department of Mathematical Sciences, 
Tsinghua University, Beijing, China.}
}

%
%
%
%
%
%
%

\vspace{.4in}

\abstract{In this paper, we are concerned with the non-relativistic limit of a class of computable approximation models for radiation hydrodynamics. The models consist of the compressible Euler equations coupled with moment closure approximations to the radiative transfer equation. They are first-order partial differential equations with source terms. As hyperbolic relaxation systems, they are showed to satisfy the structural stability condition proposed by W.-A. Yong (1999). Base on this, we verify the non-relativistic limit by combining an energy method with a formal asymptotic analysis.}

\vspace{.1in}

\keywords{ Radiation hydrodynamics,  Moment closure systems, Non-relativistic limit, Structural stability condition, Formal asymptotic expansion}

\section{Introduction}

Radiation hydrodynamics \cite{mihalas2013foundations} studies interactions of radiation and matters through momentum and energy exchanges.
It is modeled with the compressible Euler equations coupled with a radiation transport equation via an integral-type source \cite{buet2004asymptotic,lowrie1999coupling,pomraning2005equations}:
\begin{equation}\label{intro-eq1}
	\begin{aligned}
		&\partial_t \rho + \textit{div} (\rho \boldsymbol{v}) = 0,\\
		&\partial_t (\rho \boldsymbol{v}) + \textit{div} (\rho \boldsymbol{v} \otimes \boldsymbol{v}) + \nabla p = -\boldsymbol{S}_F, \\
		&\partial_t (\rho E) + \textit{div} (\rho \boldsymbol{v} E + p\boldsymbol{v} ) = - cS_E,\\
		&\partial_t I + c\boldsymbol{\mu} \cdot  \nabla I = S.
	\end{aligned}
\end{equation}
Here the unknowns $\rho$, $\boldsymbol{v}$ and $E$ denote the density, velocity and energy of the fluid, respectively;  $I = I(x, t,\boldsymbol{\mu})\geq 0$ is the radiative intensity depending on the direction variable $\boldsymbol{\mu} \in \mathcal{S}^{D-1}$  as well; the thermodynamics pressure $p=p(\rho, \theta)$ is a smooth function of $\rho$ and temperature $\theta$; $S=S(\rho, \theta, I; c)$ is the source of radiation; $c$ is the speed of light; the source term in \eqref{intro-eq1} is taken to be \cite{buet2004asymptotic}
\begin{equation}\label{2eq:soure-term-mpn}
	S = c \rho \sigma_a(\theta)\bigg(|S^{D-1}|b(\theta) - I\bigg) + \frac{1}{c} \rho \sigma_s(\theta)\bigg(\vert S^{D-1}\vert\int_{S^{D-1}} I \mathrm{d} \boldsymbol{\mu} - I\bigg),
\end{equation}
where $\sigma_a = \sigma_a(\theta)>0$ is the absorption coefficient, $\sigma_s = \sigma_s(\theta)>0$ is the scattering coefficient, and the Planck function $b = b(\theta)$ is smooth and satisfies 
\begin{equation*}
	b(\theta)>0, \qquad b'(\theta)>0;
\end{equation*}
$S_F(S_E)$ characterizes the energy (resp. impulse) exchange between the radiation and matter \cite{Jiang2015Nonrelativistic}:
\begin{equation*}
	\boldsymbol{S}_F = \int_{S^{D-1}} \boldsymbol{\mu} S \mathrm{d} \boldsymbol{\mu},\qquad S_E = \int_{S^{D-1}} S \mathrm{d} \boldsymbol{\mu}.
\end{equation*}

The theory of radiation hydrodynamics has a wide range of applications, including nonlinear pulsation, supernova explosions, stellar winds, and laser fusion \cite{mihalas2013foundations,pomraning2005equations,Zel2002physics}. 
However, the full set of radiation hydrodynamics equations are computationally expensive and numerically difficult to solve since the radiative equation is a high-dimensional integro-differential equation. Various solution methods have been developed. Among them, the moment method is quite attractive due to its numerous advantages such as clear physical interpretation and high efficiency in transitional regimes. It has been regarded as a successful tool to solve radiative equation \cite{minerbo1978maximum,levermore1996moment,CFL2015siam}.

Recently, a new moment method was proposed \cite{fan2020nonlinear,Fan2020nonlinearJcp} for the radiative transfer equation, which is basically the last equation in equations \eqref{intro-eq1}. The resultant model (called the HMP$_N$ model) is globally hyperbolic, and some important physical properties are preserved. 
In this paper, we focus on the equation \eqref{intro-eq1} with the last equation replaced by its HMP$_N$ approximation (and the source terms are treated accordingly). The resultant coupling system will be called the Euler-HMP$_N$ approximation of equations \eqref{intro-eq1}. See Section \ref{sec3-2} for the Euler-HMP$_N$ approximation.
 
The goal of this paper is to investigate the non-relativistic limit of Euler-HMP$_N$ approximation, i.e., the limit as the light speed tends to infinity. We restrict ourself to the mono-dimensional geometry. 
Under quite general assumptions, we prove that as the light speed goes to infinity, the Euler-HMP$_N$ approximation  of equations \eqref{intro-eq1} converges to 
\begin{equation*}
	\begin{aligned}
			&\partial_t \rho + \partial_x(\rho v)=0,\\
			&\partial_t (\rho v) + \partial_x\bigg(\rho v^2 + p + \frac{1}{3}b(\theta) \bigg)=0,\\
			&\partial_t \big(\rho E + b(\theta) \big) + \partial_x \big(\rho E v + p v \big) = \partial_x \bigg(\frac{1}{3\rho\sigma_a(\theta)} \partial_x b(\theta) \bigg)
	\end{aligned}
\end{equation*} 
with corresponding initial data. See details in Section \ref{4-1sec}. 

Note that the non-relativistic limit is a singular perturbation problem. Such singular limit problems have attracted much attention for many years. For instance, Marcati and Milani \cite{Marcati1988Singular,Marcati1990darcy} firstly analyzed the singular limit for weak solutions of hyperbolic balance laws with particular source terms. Bardos et al.   \cite{Bardos1988nonaccretive,Bardos1987Rosseland} studied the limit problem for non-smooth solutions of the closely related nonlinear radiative transfer equations. 
With the well-known compensated compactness theory, Marcati \cite{Marcati2000Hyperbolic} studied general $2\times 2$ systems with applications to multi-dimensional problems and a class of one-dimensional semilinear systems. Recently, for a class of first-order symmetrizable hyperbolic systems, the authors \cite{lattanzio2001hyperbolic,peng2016parabolic} studied the diffusion relaxation limit and derived parabolic type equations.

For the above works, the structural stability condition proposed by Yong \cite{yong1993singular,yong2004entropy} is the key. It is a proper counterpart of the H-theorem for the kinetic equation. Indeed, this condition has been tacitly respected by many well-developed physical theories \cite{yong2008interesting}. Recently, it was shown in \cite{Di2017nm} to be satisfied by the hyperbolic regularization models derived in \cite{CFL2015siam,CFL2014cpam}, which provides a basis for the first author to prove that the models well approximate the Navier-Stokes equations \cite{Ma2021}. 
In contrast, the Biot/squirt (BISQ) model for wave propagation in saturated porous media violates this condition and thus allows exponentially exploding asymptotic solutions \cite{liu2016stability}. On the other hand, this condition also implies that the resultant moment system is compatible with the classical theories \cite{zhao2017stability,Ma2021}. 

In this paper, we verify the structural stability condition for the Euler-HMP$_N$ system and construct formal asymptotic solutions thereof. On the basis of the stability condition, we use the energy method to prove the validity of the asymptotic approximations. Moreover, we conclude the existence of the solution to the Euler-HMP$_N$ systems in the time interval where the approximations are well-defined.

Here, we mention some related works for the equations of radiation hydrodynamics. The system \eqref{intro-eq1} was introduced by Pomraning and Mihalas \cite{pomraning2005equations} in the framework of special relativity. For the radiation hydrodynamics system with the radiation transfer equation replaced by its discrete-ordinate approximations, Rohde and Yong \cite{Rohde2007nonrelativistic} showed the existence of entropy solutions to the Cauchy problems in the framework of functions of bounded variation and investigated the non-relativistic limit of the entropy solutions. In \cite{Fan2016Non-relativistic}, Fan, Li, and Nakamura studied the non-relativistic and low Mach number limits for the Navier-Stokes-Fourier-P1 approximation radiation model. In \cite{Jiang2015Nonrelativistic}, Jiang, Li and Xie studied non-relativistic limit problem of the compressible NSF--P1 approximation radiation hydrodynamics model arising in radiation hydrodynamics. We refer to \cite{Necasova2015Nonrelativistic,Wang2010nonrelativistic,Teleaga2008,Danchin2016} for more references.
 
The paper is organized as follows. Section \ref{2sec} presents a brief introduction of MP$_N$ and HMP$_N$ moment methods for the radiative transfer equation. In Section \ref{3sec}, we verify the structural stability condition for the Euler-HMP$_N$ systems. Section \ref{4sec} is devoted to the non-relativistic limit. In particular, the formal asymptotic expansion is constructed in Subsection \ref{4-1sec} and justified in Subsection \ref{4-2sec}. Finally, we conclude our paper in Section \ref{5sec}.

\section{HMP$_N$ model}\label{2sec}

In this section, we present the HMP$_N$ model proposed by Fan et. \cite{fan2020nonlinear} for the radiative transfer equation (RTE) for a gray medium in the slab geometry:
\begin{equation}\label{2eq:rte-equ}
	\begin{aligned}
		\frac{1}{c}\frac{\partial I}{\partial t} + \mu \frac{\partial I}{\partial x} = \emph{S}(I)
	\end{aligned}.
\end{equation}
Here $I = I(x, t, \mu)\geq 0$ is the specific intensity of radiation, the variable $\mu \in [-1, 1]$ is the cosine of the angle between the photon velocity and the positive $x$-axis, the time variable $t\in \mathrm{R}_+$ and space variable $x\in \Omega$ with $\Omega$ a closed interval,  
and the right-hand side $\emph{S}(I)$ is defined in \eqref{2eq:soure-term-mpn}.

Define the $k$th moment of the specific intensity as
\begin{equation*}
	E_k = \langle I \rangle_{k} \dot{=} \int_{-1}^1 \mu^k I \mathrm{d} \mu, \qquad k \in \mathrm{N}.
\end{equation*}
Multiplying \eqref{2eq:rte-equ} by $\mu^k$ and integrating it with respect to $\mu$ over $[-1, 1]$ yield the moment equations
\begin{equation}\label{moment-equ-rte}
	\frac{1}{c}\frac{\partial E_{k}}{\partial t} + \frac{\partial E_{k+1}}{\partial x} = \langle\emph{S}(I)\rangle_k.
\end{equation}
Notice that the governing equation of $E_{k}$  depends on the $(k+1)$th moment $E_{k+1}$, which indicates that the full system contains an infinite number of equations, so we need to provide a so-called moment closure for the model. A common strategy is to construct an Ansatz: $\hat{I} = \hat{I}(E_0,E_1, \cdots, E_N;\mu)$ with a prescribed integer $N$ such that
\begin{equation*}
	\langle \hat{I}(E_0,E_1, \cdots, E_N;\mu) \rangle_{k} = E_k, \qquad k = 0,1,\cdots, N.
\end{equation*}
Then the moment closure is given by
\begin{equation*}
	E_{N+1} = \langle \hat{I}(E_0,E_1, \cdots, E_N;\mu) \rangle_{N+1}.
\end{equation*}
Based on this strategy, many moment systems have been developed, such as the $P_N$ model \cite{Jeans1917RTE}, the $M_N$ model \cite{levermore1996moment,dubroca1999theoretical}, the positive $P_N$ model \cite{hauck2010positive}, the MP$_N$ model \cite{Fan2020nonlinearJcp}, the  HMP$_N$ model \cite{fan2020nonlinear} and so on. 

In this paper, we focus on the HMP$_N$ model which is based on the MP$_N$ model \cite{Fan2020nonlinearJcp}. 
The latter takes the ansatz of the $M_1$ model (the first order of the $M_N$ model) as a weight function and then constructs the ansatz by expanding the specific intensity around the weight function in terms of orthogonal polynomials in the velocity direction. 
Therefore, we briefly describe the MP$_N$ model.

\subsection{MP$_N$ model}
The construction of the MP$_N$ model starts with the following weight function
\begin{equation}\label{2eq:defin-w}
	\omega^{[\alpha]}(\mu) = \frac{1}{(1+\alpha\mu)^4},\qquad \alpha \in (-1, 1).
\end{equation}
Here $\alpha$ is related to the low-order moment of radiation intensity and its expression will be given later. Having this weight function, we use the Gram-Schmidt orthogonalization to define a series of orthogonal polynomials on the interval $[-1,1]$:
\begin{equation*}
	\phi_{0}^{[\alpha]}(\mu) = 1, \qquad \phi_{j}^{[\alpha]}(\mu) = \mu^j - \sum_{k=0}^{j-1}\frac{\kappa_{j, k}}{\kappa_{k, k}}\phi_{k}^{[\alpha]}(\mu), \qquad j\geq 1,
\end{equation*}
where the coefficients are
\begin{equation}\label{2eq:kappa-define}
	\kappa_{j, k} = \int_{-1}^1 \mu^j \phi_{k}^{[\alpha]}(\mu)\omega^{[\alpha]}(\mu) \mathrm{d} \mu.
\end{equation}
From the orthogonality, it is easy to see that
\begin{equation}\label{2eq:kappa-define-ii}
	\kappa_{j, k} = 0, ~\text{if}~ j < k, \qquad \kappa_{k, k} = \int_{-1}^1 (\phi_{k}^{[\alpha]}(\mu))^2\omega^{[\alpha]}(\mu) \mathrm{d} \mu > 0.
\end{equation}

Set $\Phi_{i}^{[\alpha]}(\mu) = \phi_{i}^{[\alpha]}(\mu) \omega^{[\alpha]}(\mu)$ for $i = 0, 1,\cdots, N$.
The Ansatz for the MP$_N$ model is 
\begin{equation*}
	\hat{I}(E_0,E_1, \cdots, E_N;\mu) \dot{=} \sum_{i=0}^N f_i \Phi_{i}^{[\alpha]}(\mu),
\end{equation*}
where $f_i$ are the expansion coefficients. Thanks to the orthogonality, the coefficients can be expressed as
\begin{equation}\label{2-3equ:f-moment-relation}
	f_i = \frac{1}{\kappa_{i, i}}\bigg(E_i - \sum_{j=0}^{i-1}\kappa_{i, j}f_j\bigg), \qquad 0\leq i \leq N.
\end{equation}
The moment closure form is given by
\begin{equation*}
	E_{N+1} = \sum_{k=0}^N \kappa_{N+1, k}f_k.
\end{equation*}
For the MP$_N$ systems, the parameter $\alpha$ is taken to be
\begin{equation*}
	\alpha = -\frac{3E_1/E_0}{2+\sqrt{4-3(E_1/E_0)^2}}.
\end{equation*}
A simple calculation shows that $f_1 = 0$.

Define the Hilbert space $\mathbb{H}_N^{[\alpha]}$ as  
\begin{equation*}
	\mathbb{H}_N^{[\alpha]} :=\text{span} \bigg\{\Phi_{i}^{[\alpha]}(\mu),i=0,\cdots,N \bigg\}
\end{equation*}
with the inner product 
\begin{equation*}
	\langle \Phi, \Psi \rangle_{\mathbb{H}_N^{[\alpha]}}=\int_{-1}^1 \Phi(\mu) \Psi(\mu)/\omega^{[\alpha]}(\mu) \mathrm{d} \mu.
\end{equation*}
Let $\mathbb{H}$ be the space of all the admissible specific intensities for the RTE. Consider the map from $\mathbb{H}$ to $\mathbb{H}_N^{[\alpha]}$:
\begin{equation*}
	\mathrm{P}:I \rightarrow \hat{I}=\sum_{i=0}^N f_i \Phi_{i}^{[\alpha]}(\mu), \qquad f_i = \frac{\langle I, \Phi_{i}^{[\alpha]} \rangle_{\mathbb{H}_N^{[\alpha]}}} {\langle \Phi_{i}^{[\alpha]}, \Phi_{i}^{[\alpha]} \rangle_{\mathbb{H}_N^{[\alpha]}}} = \frac{\int_{-1}^1 I \phi_{i}^{[\alpha]}\mathrm{d} \mu}{\kappa_{i,i}},
\end{equation*}
where $\kappa_{ii}$ is defined in \eqref{2eq:kappa-define-ii}. Clearly, this map is an orthogonal projection.

Similar to the reduction framework in the literature \cite{CFL2015siam}, the MP$_N$ moment equation can be obtained as
\begin{equation*}
	\frac{1}{c}\mathrm{P}\frac{\partial \mathrm{P}I}{\partial t} + \mathrm{P}\mu\frac{\partial \mathrm{P}I}{\partial x} = \mathrm{P}\emph{S}(\mathrm{P}I).
\end{equation*}
Note that the unknown variables are coefficients
\begin{equation*}
	w = (f_0, \alpha, f_2, \cdots, f_N)^T  
\end{equation*}
of $\mathrm{P}I$ in the basis space $\mathbb{H}_N^{[\alpha]}$.

The MP$_2$ moment model was showed in \cite{Fan2020nonlinearJcp} to be globally hyperbolic and perform well in numerical experiments. But it allows a non-physical characteristic velocity exceeding the speed of light. When $N\geq 3$, the global hyperbolicity fails. For these reasons, the HMP$_N$ moment closure model as a novel hyperbolic regularization was proposed \cite{fan2020nonlinear}. 

\subsection{HMP$_N$ model}\label{4subsec:regularized MP_N model}
 
This class of models uses
\begin{equation}\label{2eq:defin-tilde-w}
	\tilde{\omega}^{[\alpha]}(\mu) = \frac{1}{(1+\alpha\mu)^5},\qquad \alpha \in (-1, 1)
\end{equation}
as the weight function which is different from that of MP$_N$ models. As before, we introduce the orthogonal polynomials with respect to this new weight function:
\begin{equation}\label{2eq:phi_tilde}
	\tilde{\phi}_{0}^{[\alpha]}(\mu) = 1, \qquad \tilde{\phi}_{j}^{[\alpha]}(\mu) = \mu^j - \sum_{k=0}^{j-1}\frac{\tilde{\kappa}_{j, k}}{\tilde{\kappa}_{k, k}}\tilde{\phi}_{k}^{[\alpha]}(\mu), \qquad j\geq 1.
\end{equation}
The coefficients are
\begin{equation}\label{2eq:kappa-tilde-define}
	\tilde{\kappa}_{j, k} = \int_{-1}^1 \mu^j \tilde{\phi}_{k}^{[\alpha]}(\mu)\tilde{\omega}^{[\alpha]}(\mu) \mathrm{d} \mu.
\end{equation}
and the analogue of \eqref{2eq:kappa-define-ii} also holds:
\begin{equation}\label{2-3:kappa-prop}
	\tilde{\kappa}_{j, k} = 0, ~\textit{if}~ j < k, \qquad \tilde{\kappa}_{k, k} = \int_{-1}^1 (\tilde{\phi}_{k}^{[\alpha]}(\mu))^2\tilde{\omega}^{[\alpha]}(\mu) \mathrm{d} \mu > 0.
\end{equation}

Similarly, we have a new Hilbert space
\begin{equation*}
	\tilde{\mathbb{H}}_N^{[\alpha]} :=\text{span} \bigg\{\tilde{\Phi}_{i}^{[\alpha]}(\mu) = \tilde{\phi}_{i}^{[\alpha]}(\mu)\tilde{\omega}^{[\alpha]}(\mu),i=0,\cdots, N \bigg\}
\end{equation*}
with the inner product
\begin{equation}\label{2eq:tilde-h-dot}
	\langle \Phi, \Psi \rangle_{\tilde{\mathbb{H}}_N^{[\alpha]}}=\int_{-1}^1 \Phi(\mu) \Psi(\mu)/\tilde{\omega}^{[\alpha]}(\mu) \mathrm{d} \mu
\end{equation}
and the orthogonal projection from $\mathbb{H}$ to $\tilde{\mathbb{H}}_N^{[\alpha]}$:
\begin{equation*}
	\tilde{\mathrm{P}}:I \rightarrow \hat{I}=\sum_{i=0}^N g_i \tilde{\Phi}_{i}^{[\alpha]}(\mu),\qquad g_i = \frac{\langle I, \tilde{\Phi}_{i}^{[\alpha]}(\mu) \rangle_{\tilde{\mathbb{H}}_N^{[\alpha]}}} {\langle \tilde{\Phi}_{i}^{[\alpha]}, \tilde{\Phi}_{i}^{[\alpha]} \rangle_{\tilde{\mathbb{H}}_N^{[\alpha]}}} = \frac{\int_{-1}^1 I \tilde{\phi}_{i}^{[\alpha]}\mathrm{d} \mu}{\tilde{\kappa}_{i,i}}.
\end{equation*}

Having these preparations, the HMP$_N$ models were constructed in \cite{fan2020nonlinear} as 
\begin{equation*}
	\frac{1}{c}\tilde{\mathrm{P}}\frac{\partial \mathrm{P}I}{\partial t} + \tilde{\mathrm{P}}\mu \tilde{\mathrm{P}}\frac{\partial \mathrm{P}I}{\partial x} = \tilde{\mathrm{P}}\emph{S}(\mathrm{P}I).
\end{equation*}
They can be rewritten as the equations for $w = (f_0, \alpha, f_2, \cdots, f_N)^T$:
\begin{equation}\label{4eq:w_moment_equ}
	\frac{1}{c}\frac{\partial w}{\partial t}+\tilde{D}^{-1}\tilde{M}\tilde{D}\frac{\partial w}{\partial x}=\tilde{D}^{-1}\tilde{\mathrm{S}}.
\end{equation}
Here the matrix $\tilde{D}$ is denoted as
\begin{equation*}
	\begin{aligned}
		\tilde{\mathrm{P}}\frac{\partial \mathrm{P}I}{\partial t} = (\tilde{\Phi}_i^{[\alpha]})^T\tilde{D}\frac{\partial w}{\partial t}
	\end{aligned}
\end{equation*}
and
\begin{equation}\label{2eq:defin-s}
\begin{aligned}
	&\tilde{M}=\tilde{\Lambda}^{-1}\langle \mu\tilde{\Phi}^{[\alpha]},(\tilde{\Phi}^{[\alpha]})^T\rangle_{\tilde{\mathrm{H}}_N^{[\alpha]}},
	\qquad \tilde{\Lambda} = \mathbf{diag}(\tilde{\kappa}_{0,0},\tilde{\kappa}_{1,1},\cdots,\tilde{\kappa}_{N,N}),\\
	&\tilde{S}=(\langle \tilde{\Phi}_i^{[\alpha]},\mathrm{S}(\mathrm{P}I)\rangle_{\tilde{\mathrm{H}}_N^{[\alpha]}}/\tilde{\kappa}_{i,i})_{i=0,\cdots,N}.
\end{aligned}
\end{equation}
The details can be found in the literature \cite{fan2020nonlinear}.

\section{Stability Analysis}\label{3sec}

\subsection{Structural stability condition}\label{stability_condition}

In \cite{yong1999singular}, Yong proposed a structural stability condition for systems of first-order partial differential equations with source terms:
\begin{equation*}
	U_t+\sum^D_{j=1}A_j(U)U_{x_j}=Q(U),
\end{equation*}
where $A_j(U)$ and $Q(U)$ are $n\times n$-matrix and $n$-vector smooth functions of $U\in G\subset R^n $ with state space $G$ open and convex. The subscripts $t$ and $x_j$ refer to the partial derivatives with respect to $t$ and $x_j$.

Set $Q_U=\frac{\partial Q}{\partial U}$ and define the equilibrium manifold
\begin{equation*}
	E:=\{U \in G: Q(U)=0\}.	
\end{equation*} 
The structural stability condition consists of the following three items:
\begin{enumerate}
	\item[(i)] There are an invertible $n \times n$ matrix $P(U)$ and an invertible $r\times r$ matrix $S(U)$, defined on the equilibrium manifold $E$, such that
		\begin{equation*}
			P(U)Q_U(U)=	
			\begin{bmatrix}
				0 & 0\\
				0 & S(U)
			\end{bmatrix}
			P(U), \quad \textit{for} \quad  U \in E.
		\end{equation*}
	\item[(ii)]  There is a symmetric positive definite matrix $A_0(U)$ such that
		\begin{equation*}
			A_0(U)A_j(U)=A_j(U)^TA_0(U), \quad \textit{for} \quad U \in G.
		\end{equation*}
	\item[(iii)] The left-hand side and the source term are coupled in the following way:
		\begin{equation*}
			A_0(U)Q_U(U)+Q_U^T(U)A_0(U)\leq	-P^T(U)
			\begin{bmatrix}
				0 & 0\\
				0 & I
			\end{bmatrix}
			P(U), \quad \textit{for} \quad  U \in E.
		\end{equation*}
\end{enumerate}
Here $I$ is the unit matrix of order $r$.

As shown in \cite{Yong2008}, this set of conditions has been tacitly respected by many well-developed physical theories. Condition (i) is classical for initial value problems of the system of ordinary differential equations (ODE, spatially homogeneous systems), while (ii) means the symmetrizable hyperbolicity of the PDE system. Condition (iii) characterizes a kind of coupling between the ODE and PDE parts. Recently, this structural stability condition was shown in \cite{Di2017nm} to be proper for certain moment closure systems. Furthermore, this condition also implies the existence and stability of the zero relaxation limit of the corresponding initial value problems \cite{yong1999singular}.

\subsection{Stability of the Euler-HMP$_N$ system}\label{sec3-2}

In this subsection, we verify the structural stability condition for the following one-dimensional Euler-HMP$_N$ system
\begin{equation}
	\begin{aligned}\label{4eq:radiation_hy_equ}
		& \partial_t \rho + \partial_x (\rho v) = 0,\\
		& \partial_t (\rho v) + \partial_x (\rho v^2 +p) = \rho\bigg(c\sigma_a(\theta) + \frac{1}{c}\sigma_s(\theta)\bigg) \kappa_{1,0}(\alpha)f_0 ,\\
		& \partial_t (\rho E) + \partial_x (\rho E v + p v) = c^2\rho\sigma_a(\theta)\bigg(\kappa_{0,0}(\alpha) f_0 - b(\theta)\bigg),\\
		& \partial_t w + c\tilde{D}^{-1}\tilde{M}\tilde{D} \partial_x w = c\tilde{D}^{-1}\tilde{S},
	\end{aligned}
\end{equation}
which is the equations \eqref{intro-eq1} with its last equation replaced by the HMP$_N$ approximation \eqref{4eq:w_moment_equ}. Here the following relations have been used:
\begin{equation*}
	\begin{aligned}
		S_F = \int_{-1}^1 \mu S \mathrm{d} \mu = -\rho (c\sigma_a(\theta) + \frac{1}{c}\sigma_s(\theta)) E_1,\qquad
		S_E = \int_{-1}^1 S \mathrm{d} \mu = -c\rho \sigma_a(\theta)(b(\theta) - E_0)
	\end{aligned}
\end{equation*}
with $E_0 = \kappa_{0,0}(\alpha) f_0$ and $E_1 = \kappa_{1,0}(\alpha)f_0$ due to the formula \eqref{2-3equ:f-moment-relation}.

Let $u = (\rho, ~\rho v, ~\rho E)^{T}\in \mathrm{R}^{3}$ be the hydrodynamical variables and $w = (f_0, \alpha, f_2, \cdots, f_N)^T\in \mathrm{R}^{N+1}$ be radiation variables. Denoting $F(u) = (\rho v, ~\rho v^2 + p, ~\rho E v + pv)^T$ and $\varepsilon = 1/c$, we can rewrite \eqref{4eq:radiation_hy_equ} as 
\begin{equation}\label{4eq:U_equ}
	\partial_t U + \frac{1}{\varepsilon}A(U;\varepsilon)\partial_x U = \frac{1}{\varepsilon^2} Q(U;\varepsilon),
\end{equation}
with 
\begin{equation*}
	\begin{aligned}
		&U = \begin{pmatrix}
			u\\w
		\end{pmatrix},\qquad A(U;\varepsilon)=\begin{pmatrix}
			\varepsilon F_u(u)&0\\
			0&\tilde{D}^{-1}\tilde{M}\tilde{D}
		\end{pmatrix}, \qquad Q(U;\varepsilon)=\begin{pmatrix}
			q^{(1)}(U;\varepsilon)\\q^{(2)}(U;\varepsilon)
		\end{pmatrix},\\
		&q^{(1)}(U;\varepsilon) \triangleq \bigg(0,\quad \rho(\varepsilon\sigma_a(\theta) + \varepsilon^3 \sigma_s(\theta)) \kappa_{1,0}(\alpha)f_0, \quad \rho\sigma_a(\theta)(\kappa_{0,0}(\alpha) f_0 - b(\theta))\bigg)^T,\\
		&q^{(2)}(U;\varepsilon) \triangleq \varepsilon\tilde{D}^{-1}\tilde{S}(U;\varepsilon).
	\end{aligned}
\end{equation*}
Note that $\tilde{D}=\tilde{D}(U)$ and $\tilde{M}=\tilde{M}(U)$ are independent of $\varepsilon$. The state space is
\begin{equation*}
	G = \{U = (u, w) \vert \rho > 0, ~\theta > 0, ~\alpha \in (-1,1) \}.
\end{equation*}

Next, we write down the explicit expression of $\tilde{S} = \tilde{S}(U; \varepsilon)$. 
Recall that $PI = \sum_{i=0}^N f_i \Phi_{i}^{[\alpha]}(\mu)$. Then by its definition \eqref{2eq:soure-term-mpn} we have 
\begin{equation*}
	S(PI) = \rho \bigg(\frac{1}{2}c\sigma_a b(\theta) + \frac{1}{2c}\sigma_s \int_{-1}^1 I \mathrm{d}\mu -(c\sigma_a + \frac{1}{c}\sigma_s) \sum_{j=0}^N f_j\Phi_j^{[\alpha]} \bigg).
\end{equation*}
From \cite{fan2020nonlinear} we know that $\Phi_j^{[\alpha]} = \alpha\tilde{\Phi}_{j+1}^{[\alpha]} +\beta_j\tilde{\Phi}_j^{[\alpha]}$ with $\beta_i = \frac{\kappa_{i, i}}{\tilde{\kappa}_{i, i}}$. Thus, we compute the $i$th component of $\tilde{S}$ in equation \eqref{2eq:defin-s} : 
\begin{equation*}\label{2c-suorce-equ}
	\begin{aligned}
		\tilde{S}_i(U; \varepsilon)=& \frac{<\tilde{\Phi}_i^{[\alpha]},S(PI)>_{\tilde{H}_N^{[\alpha]}}}{\tilde{\kappa}_{i,i}}\\
		=& \frac{1}{\tilde{\kappa}_{i,i}}\int_{-1}^1 \tilde{\phi}_i^{[\alpha]}\rho \bigg(\frac{1}{2}c\sigma_a b(\theta) + \frac{1}{2c}\sigma_s \int_{-1}^1 I \mathrm{d}\mu -(c\sigma_a + \frac{1}{c}\sigma_s) \sum_{j=0}^N f_j\Phi_j^{[\alpha]} \bigg)\mathrm{d}\mu\\
		=&\frac{\rho (\frac{1}{\varepsilon} \sigma_a b(\theta) + \varepsilon \sigma_s E_0)}{2\tilde{\kappa}_{i,i}}\int_{-1}^1 \tilde{\phi}_i^{[\alpha]}\mathrm{d}\mu - \frac{\rho(\frac{1}{\varepsilon} \sigma_a + \varepsilon \sigma_s)}{\tilde{\kappa}_{i,i}}\sum_{j=0}^N f_j\int_{-1}^1 \tilde{\phi}_i^{[\alpha]} \Phi_j^{[\alpha]} \mathrm{d}\mu\\
		=&\frac{\rho (\frac{1}{\varepsilon} \sigma_a b(\theta) + \varepsilon \sigma_s E_0)}{2\tilde{\kappa}_{i,i}}\int_{-1}^1 \tilde{\phi}_i^{[\alpha]}\mathrm{d}\mu- \frac{\rho(\frac{1}{\varepsilon}  \sigma_a + \varepsilon \sigma_s)}{\tilde{\kappa}_{i,i}}\sum_{j=0}^N f_j\int_{-1}^1 \tilde{\phi}_i^{[\alpha]} \bigg(\alpha\tilde{\Phi}_{j+1}^{[\alpha]} +\beta_j\tilde{\Phi}_j^{[\alpha]}\bigg) \mathrm{d}\mu\\
		=&\frac{\rho (\frac{1}{\varepsilon} \sigma_a b(\theta) + \varepsilon \sigma_s E_0)}{2\tilde{\kappa}_{i,i}}\int_{-1}^1 \tilde{\phi}_i^{[\alpha]}\mathrm{d}\mu - \frac{\rho(\frac{1}{\varepsilon} \sigma_a + \varepsilon \sigma_s)}{\tilde{\kappa}_{i,i}}\sum_{j=0}^N f_j \bigg(\alpha\delta_{i, j+1}\tilde{\kappa}_{i,i} +\beta_j\delta_{i, j}\tilde{\kappa}_{i,i}\bigg) \\
		=&\frac{1}{\varepsilon} \frac{\rho ( \sigma_a b(\theta) + \varepsilon^2 \sigma_s \kappa_{0,0}f_0)}{2\tilde{\kappa}_{i,i}}\int_{-1}^1 \tilde{\phi}_i^{[\alpha]}\mathrm{d}\mu - \frac{1}{\varepsilon} \rho(\sigma_a + \varepsilon^2 \sigma_s)(\alpha f_{i-1} +\beta_i f_i) .\\
	\end{aligned}
\end{equation*}
Here we have used $E_0 = \int_{-1}^1 I \mathrm{d}\mu= \kappa_{0,0}f_0$ and $\int_{-1}^1 \tilde{\phi}_i^{[\alpha]} \tilde{\Phi}_{j}^{[\alpha]}\mathrm{d}\mu = \delta_{i, j} \tilde{\kappa}_{i, i}$.
Set $\hat{S}(U; \varepsilon) = \varepsilon \tilde{S}(U; \varepsilon)$. We have $q^{(2)}(U;\varepsilon)=\tilde{D}^{-1}\hat{S}(U; \varepsilon)$ and 
\begin{equation}\label{2eq:s-hat}
	\hat{S}_i(U; \varepsilon) = \frac{\rho ( \sigma_a b + \varepsilon^2 \sigma_s \kappa_{0,0}f_0)R_i}{2\tilde{\kappa}_{i,i}} - \rho(\sigma_a + \varepsilon^2 \sigma_s)(\alpha f_{i-1} +\beta_i f_i)
\end{equation}
with $R_i \triangleq \int_{-1}^1 \tilde{\phi}_i^{[\alpha]}\mathrm{d} \mu$.
Note that $\kappa_{0,0}$, $\tilde{\kappa}_{i,i}$, $R_i$ and $\beta_i$ depend on $\alpha$ and $\hat{S}_i(U; \varepsilon)$ is a polynomial of $\varepsilon$. 
Since $f_{-1} = f_1 =0$, $\hat{S}(U; 0)$ can be rewritten as
\begin{equation}\label{2eq:hat-s}
	\begin{aligned}
		\hat{S}_0(U; 0) & = \frac{\rho \sigma_a}{\tilde{\kappa}_{0,0}}(b -\kappa_{0,0} f_0),\qquad
		\hat{S}_1(U; 0) = \frac{\rho\sigma_a b}{2\tilde{\kappa}_{1,1}}R_1 -\rho\sigma_a\alpha f_0, \qquad
		\hat{S}_2(U; 0) = \frac{\rho \sigma_a b}{2\tilde{\kappa}_{2,2}}R_2 - \rho\sigma_a\beta_2f_2 ,\\
		\hat{S}_i(U; 0) &= \frac{\rho \sigma_a b}{2\tilde{\kappa}_{i,i}}R_i - \rho\sigma_a(\alpha f_{i-1} +\beta_i f_i), \qquad \textit{for}~ i = 3,\cdots,N.
	\end{aligned}
\end{equation}
Here $\rho$, $\sigma_a$, $b$, $\kappa_{i,i}$, $\tilde{\kappa}_{i,i} >0$.

For $\kappa_{i, j}$ and $\tilde{\kappa}_{i, j}$, we have the following explicit expressions.
\begin{equation}\label{2lemma:kappa}
	\begin{aligned}
		&\kappa_{0,0} = \int_{-1}^1 w^{[\alpha]}(\mu) \mathrm{d}\mu = \frac{2 (3+\alpha^2)}{3 (1 - \alpha^2)^3}, \qquad \tilde{\kappa}_{0,0} = \int_{-1}^1 \tilde{w}^{[\alpha]}(\mu) \mathrm{d}\mu = \frac{2 \left(\alpha^2+1\right)}{\left(\alpha^2-1\right)^4},\\
		&\kappa_{1,0} = \int_{-1}^1 \mu w^{[\alpha]}(\mu) \mathrm{d}\mu = \frac{8 \alpha}{3 \left(\alpha^2-1\right)^3},\qquad \tilde{\kappa}_{1,0} = \int \mu \tilde{w}^{[\alpha]}(\mu) \mathrm{d}\mu = -\frac{2 \alpha \left(\alpha^2+5\right)}{3 \left(\alpha^2-1\right)^4},\\
		&\tilde{\kappa}_{1,1}(0) = \int_{-1}^1 (\tilde{\phi}_{1}^{[0]})^2(\mu) \tilde{w}^{[0]}(\mu) \mathrm{d}\mu = \frac{2}{3}.
	\end{aligned}
\end{equation}	
Here $\tilde{\phi}_{1}^{[\alpha]}= \tilde{\phi}_1^{[\alpha]}(\mu) = \mu-\tilde{\kappa}_{1, 0}(\alpha)/\tilde{\kappa}_{0, 0}(\alpha)$ according to equations \eqref{2eq:phi_tilde}.
These can be easily checked by using the expressions of $w^{[\alpha]}(\mu)$ and $\tilde{w}^{[\alpha]}(\mu)$ given in \eqref{2eq:defin-w} and \eqref{2eq:defin-tilde-w}.

For $R_i = R_i(\alpha)$ in \eqref{2eq:s-hat}, we have
\begin{lemma}\label{2lemma:R_i}
	$R_0(\alpha) = 2$, $R_1(\alpha) = \frac{2 \alpha (\alpha^2+5)}{3 (\alpha^2+1)}$, $R_i(0) = 0$ and $R'_i(0) = 0$ for $i \geq 2$.
\end{lemma}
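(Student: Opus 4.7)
The plan is to treat the four assertions separately, exploiting the orthogonality structure of $\tilde{\phi}_i^{[\alpha]}$ with respect to the weight $\tilde{\omega}^{[\alpha]}$, and reducing everything to the special case $\alpha=0$ where $\tilde{\omega}^{[0]}\equiv 1$ and the $\tilde{\phi}_i^{[0]}$ are monic Legendre polynomials.

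First, $R_0(\alpha)=2$ is immediate from $\tilde{\phi}_0^{[\alpha]}\equiv 1$. For $R_1(\alpha)$, I would use the explicit Gram--Schmidt expression
\begin{equation*}
\tilde{\phi}_1^{[\alpha]}(\mu)=\mu-\frac{\tilde{\kappa}_{1,0}(\alpha)}{\tilde{\kappa}_{0,0}(\alpha)},
\end{equation*}
integrate over $[-1,1]$ to get $R_1(\alpha)=-2\tilde{\kappa}_{1,0}(\alpha)/\tilde{\kappa}_{0,0}(\alpha)$, and substitute the closed-form values of $\tilde{\kappa}_{1,0}$ and $\tilde{\kappa}_{0,0}$ listed in \eqref{2lemma:kappa}. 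This is a routine simplification that yields the stated rational function of $\alpha$.

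For the vanishing of $R_i(0)$ with $i\geq 2$, the observation is that at $\alpha=0$ the weight reduces to $\tilde{\omega}^{[0]}\equiv 1$, so the orthogonality relation $\langle\tilde{\phi}_i^{[0]},\tilde{\phi}_0^{[0]}\rangle=0$ becomes $\int_{-1}^1\tilde{\phi}_i^{[0]}(\mu)\,d\mu=0$, which is exactly $R_i(0)=0$.

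The only step requiring any care is $R_i'(0)=0$ for $i\geq 2$. The key idea is to differentiate the exact orthogonality identity
\begin{equation*}
\int_{-1}^1\tilde{\phi}_i^{[\alpha]}(\mu)\,\tilde{\omega}^{[\alpha]}(\mu)\,d\mu=0
\end{equation*}
(valid for every $i\geq 1$) with respect to $\alpha$ and evaluate at $\alpha=0$. Using $\tilde{\omega}^{[0]}=1$ and $\partial_\alpha\tilde{\omega}^{[\alpha]}|_{\alpha=0}=-5\mu$, the product rule gives
\begin{equation*}
R_i'(0)=\int_{-1}^1\partial_\alpha\tilde{\phi}_i^{[\alpha]}(\mu)\Big|_{\alpha=0}d\mu=5\int_{-1}^1\mu\,\tilde{\phi}_i^{[0]}(\mu)\,d\mu.
\end{equation*}
Since $\tilde{\phi}_1^{[0]}(\mu)=\mu$ (because $\tilde{\kappa}_{1,0}(0)=0$) and $\tilde{\phi}_i^{[0]}$ is orthogonal under weight $1$ to every polynomial of degree less than $i$, the right-hand side vanishes as soon as $i\geq 2$. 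The main (minor) obstacle is the bookkeeping behind the differentiation step; I would phrase it so that the derivative falls only on the weight, which is clean, rather than on the complicated $\alpha$-dependent polynomials themselves. A sanity check against the explicit formula for $R_1$ gives $R_1'(0)=10/3=5\cdot\frac{2}{3}$, consistent with the same identity for $i=1$.
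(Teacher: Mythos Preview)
Your proposal is correct and follows essentially the same argument as the paper: both compute $R_0$ and $R_1$ directly from the explicit form of $\tilde{\phi}_0^{[\alpha]}$ and $\tilde{\phi}_1^{[\alpha]}$, use $\tilde{\omega}^{[0]}\equiv 1$ to read $R_i(0)=\tilde{\kappa}_{0,i}(0)=0$, and obtain $R_i'(0)$ by differentiating the orthogonality identity $\int_{-1}^1\tilde{\phi}_i^{[\alpha]}\tilde{\omega}^{[\alpha]}\,d\mu=0$ at $\alpha=0$ using $\partial_\alpha\tilde{\omega}^{[\alpha]}|_{\alpha=0}=-5\mu$. Your presentation is in fact slightly more streamlined, since you differentiate the vanishing identity directly rather than going through $\tilde{\kappa}_{0,i}'(0)$ as an intermediate quantity.
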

\begin{proof}
	According to equations \eqref{2eq:phi_tilde}, we know that $\tilde{\phi}_{0}^{[\alpha]}=1$ and $\tilde{\phi}_1^{[\alpha]}(\mu) = \mu-\tilde{\kappa}_{1, 0}(\alpha)/\tilde{\kappa}_{0, 0}(\alpha)$.
Thereby
	\begin{equation*}
		\begin{aligned}
			R_0(\alpha) = \int_{-1}^1 \tilde{\phi}_0^{[\alpha]}(\mu) \mathrm{d}\mu = 2,\qquad R_1(\alpha) = \int_{-1}^1 \tilde{\phi}_1^{[\alpha]}(\mu) \mathrm{d}\mu = \int_{-1}^1\mu \mathrm{d}\mu - 2\frac{\tilde{\kappa}_{1,0}(\alpha)}{\tilde{\kappa}_{0,0}(\alpha)} = \frac{2\alpha (\alpha^2+5)}{3 (\alpha^2+1)}.
		\end{aligned}
	\end{equation*}
Since $\tilde{w}^{[0]}(\mu)=1 $ in \eqref{2eq:defin-tilde-w}, we have 
\begin{equation*}
	R_i(0) = \int_{-1}^1 \tilde{\phi}_i^{[0]}(\mu) \mathrm{d}\mu = \int_{-1}^1 \mu^0 \tilde{\phi}_i^{[0]}(\mu)\tilde{w}^{[0]}(\mu) \mathrm{d} \mu = \tilde{\kappa}_{0, i}(0) = 0, ~ \text{for}~ i \geq 2.
\end{equation*}
Here we have used the orthogonality of $\tilde{\kappa}_{i, j}$ in \eqref{2-3:kappa-prop}  i.e., $\tilde{\kappa}_{i, j}(\alpha) = 0$ for $i < j$.
Based on the expression of $\tilde{\kappa}_{i, j}$ in \eqref{2eq:kappa-tilde-define}, we have
\begin{equation*}
	\tilde{\kappa}'_{0, i}(\alpha) = \int_{-1}^1 \frac{\partial \tilde{\phi}_i^{[\alpha]}(\mu)}{\partial \alpha} \tilde{w}^{[\alpha]}(\mu) \mathrm{d} \mu + \int_{-1}^1 \tilde{\phi}_i^{[\alpha]}(\mu) \frac{\partial \tilde{w}^{[\alpha]}(\mu)}{\partial \alpha} \mathrm{d} \mu.
\end{equation*}
Note that $\frac{\partial \tilde{w}^{[\alpha]}(\mu)}{\partial \alpha} = \frac{- 5\mu}{(1+\alpha\mu)^6}$, thus $\frac{\partial \tilde{w}^{[0]}(\mu)}{\partial \alpha} = - 5\mu $.
Taking $\alpha \rightarrow 0$, we can obtain
	\begin{equation*}
		\begin{aligned}
			R'_i(0) &= \int_{-1}^1 \frac{\partial \tilde{\phi}_i^{[0]}(\mu)}{\partial \alpha} \tilde{w}^{[0]}(\mu) \mathrm{d} \mu =\tilde{\kappa}'_{0, i}(0) - \int_{-1}^1 \tilde{\phi}_i^{[0]}(\mu) \frac{\partial \tilde{w}^{[0]}(\mu)}{\partial \alpha} \mathrm{d} \mu \\
			&= \tilde{\kappa}'_{0, i}(0) + 5 \int_{-1}^1 \mu \tilde{\phi}_i^{[0]}(\mu)\mathrm{d} \mu\\
			&= \tilde{\kappa}'_{0, i}(0) + 5 \int_{-1}^1 \mu \tilde{\phi}_i^{[0]}(\mu) \tilde{w}^{[0]}(\mu) \mathrm{d} \mu\\
			& = \tilde{\kappa}'_{0, i}(0) + 5 \tilde{\kappa}_{1, i}(0).
		\end{aligned}
	\end{equation*}
Similarly, it follows from  the orthogonality of $\tilde{\kappa}_{i, j}$ that
	\begin{equation*}
		R'_i(0) = 0, ~ \textit{for} ~i \geq 2.	
	\end{equation*}
\end{proof}

The equilibrium manifold $G_{eq}$ is defined as following 
\begin{equation*}
	G_{eq} = \{ U\in G: Q(U;0)=0 \}.
\end{equation*}
Due to equation \eqref{4eq:U_equ} and the expression of $\hat{S}$ in \eqref{2eq:hat-s}, we know that $U\in G_{eq}$ if and only if $\hat{S}(U; 0)=0$. We denote the equilibrium state as $U_{eq}$.
Using formulas \eqref{2lemma:kappa} and Lemma \ref{2lemma:R_i}, one can obtain the equilibrium state $U_{eq}$ as
\begin{equation}\label{2eq:radiation-equilibrium}
	f_0 = \frac{b(\theta)}{\kappa_{0,0}(0)}=\frac{1}{2}b(\theta), \qquad \alpha = 0, \qquad f_i = 0, \qquad \textit{for}\quad i=2,\cdots,N.
\end{equation}
It can be seen from system \eqref{4eq:U_equ} that the source term of the fluid variable is also zero on the above-mentioned equilibrium manifold. It is worth noting that for any $U_{eq}\in G_{eq}$ and any $\varepsilon$, there are
\begin{equation*}
	Q(U_{eq};\varepsilon)=0.
\end{equation*}

Next, we verify that the Euler-HMP$_N$ system \eqref{4eq:U_equ} satisfies the structural stability condition. 
Throughout this paper, we make the standard thermodynamical assumptions \cite{godlewski1991hyperbolic}:
\begin{equation*}
	p_\theta(\rho, \theta),\quad p_\rho(\rho, \theta),\quad e_\theta(\rho, \theta)>0, \qquad \textit{for} ~ \rho >0, \theta>0.
\end{equation*}
Assume the existence of a specific entropy function $s=s(\rho, e)$ satisfying the classical Gibbs relationship
\begin{equation*}
	\theta \mathrm{d} s = \mathrm{d} e + p \mathrm{d} \nu,\quad \nu := 1/\rho.
\end{equation*}
We take $\eta(u) = -\rho s(\rho, e)$ as the classical entropy function of Euler equations. This means that $\eta_{uu}$ is symmetrizer of Euler equations \cite{rohde2008dissipative}. According to equation \eqref{2eq:defin-s},  $\tilde{\Lambda}\tilde{M} = \langle \mu\tilde{\Phi}^{[\alpha]},(\tilde{\Phi}^{[\alpha]})^T\rangle_{\tilde{\mathrm{H}}_N^{[\alpha]}}$ is symmetric. Therefore, it can be seen that the Euler-HMP$_N$ system \eqref{4eq:U_equ} has the following symmetrizer 
\begin{equation}\label{2eq:radiation-A0}
	A_0(U)=\begin{pmatrix}
		\eta_{uu}&0\\
		0&\tilde{D}^T\tilde{\Lambda}\tilde{D}
	\end{pmatrix}.
\end{equation}
That is, $A_0(U)A(U;\varepsilon) = A(U;\varepsilon)^TA_0(U)$. Note that the symmetrizer $A_0 = A_0(U)$ is independent with $\varepsilon$.

In order to verify the first and third requirement in the structural stability condition, we need to compute $Q_{U}(U_{eq};0)$. Therefore, we now write down the source term of system \eqref{4eq:U_equ} as:
\begin{equation*}
	\begin{aligned}
		&Q(U;0)=\begin{pmatrix}
			q^{(1)}(U;0)\\q^{(2)}(U;0)
		\end{pmatrix},\\
		&q^{(1)}(U;0) =\big(0,\quad 0, \quad \rho\sigma_a(\theta)(\kappa_{0,0}(\alpha) f_0 - b(\theta))\big)^T,\\
		&q^{(2)}(U;0) = \tilde{D}^{-1}(U)\hat{S}(U;0).
	\end{aligned}
\end{equation*}
Set $S_{\rho E} \dot{=} \rho\sigma_a(\theta)(\kappa_{0,0}(\alpha) f_0-b(\theta))$. Resorting to formulas \eqref{2lemma:kappa} and \eqref{2eq:radiation-equilibrium}, we note that, on the equilibrium manifold $G_{eq}$,
\begin{equation*}
	\begin{aligned}
		&\frac{\partial S_{\rho E}}{\partial \rho}(U_{eq};0) = - \rho \sigma_a  b' \theta_{\rho},&  &\frac{\partial S_{\rho E}}{\partial (\rho v)}(U_{eq};0) = - \rho \sigma_a  b' \theta_{\rho v},\\
		&\frac{\partial S_{\rho E}}{\partial (\rho E)}(U_{eq};0)  = - \rho \sigma_a  b' \theta_{\rho E},&  &\frac{\partial S_{\rho E}}{\partial f_0}(U_{eq};0)= 2\rho \sigma_a ,\\
		&\frac{\partial S_{\rho E}}{\partial w}(U_{eq};0) = 0, \qquad \textit{for}~ w\neq f_0.
	\end{aligned}
\end{equation*}
For $q^{(2)}(U;0)$, we know that 
\begin{equation*}
	\begin{aligned}
		\frac{\partial (\tilde{D}^{-1}\hat{S})}{\partial U}(U_{eq};0)=\tilde{D}^{-1}\frac{\partial \hat{S}}{\partial U}(U_{eq};0) + \frac{\partial \tilde{D}^{-1}}{\partial U}\hat{S}(U_{eq};0) =\tilde{D}^{-1}\frac{\partial \hat{S}}{\partial U}(U_{eq};0).
	\end{aligned}
\end{equation*}
Thus we need to compute $\frac{\partial \hat{S}}{\partial U}(U_{eq};0)$. 
Noted that $\hat{S}_0 = \frac{\rho \sigma_a(\theta)}{\tilde{\kappa}_{0,0}(\alpha)}(b(\theta) -\kappa_{0,0}(\alpha) f_0)$ according to equations \eqref{2eq:hat-s}. Using formulas \eqref{2lemma:kappa}, we can obtain 
\begin{equation*}
	\begin{aligned}
		&\frac{\partial \hat{S}_0}{\partial \rho}(U_{eq};0) = \frac{1}{2} \rho \sigma_a  b' \theta_{\rho}, & &\frac{\partial \hat{S}_0}{\partial (\rho v)}(U_{eq};0) = \frac{1}{2} \rho \sigma_a  b' \theta_{\rho v} ,\\
		&\frac{\partial \hat{S}_0}{\partial (\rho E)}(U_{eq};0)  = \frac{1}{2} \rho \sigma_a  b' \theta_{\rho E}, & &\frac{\partial \hat{S}_0}{\partial f_0}(U_{eq};0)= - \rho \sigma_a ,\\
		&\frac{\partial \hat{S}_0}{\partial w}(U_{eq};0) = 0, \quad \textit{for} \quad w\neq f_0.
	\end{aligned}
\end{equation*}
Similarly, for $\hat{S}_1 = \frac{\rho\sigma_a (\theta)b(\theta)}{2\tilde{\kappa}_{1,1}(\alpha)}R_1(\alpha) -\rho \alpha \sigma_a(\theta) f_0$, we have 
\begin{equation*}
	\begin{aligned}
		\frac{\partial \hat{S}_1}{\partial u}(U_{eq};0) =& 0, \\
		\frac{\partial \hat{S}_1}{\partial w}(U_{eq};0)  =& 0, \qquad \textit{for} ~ w \neq \alpha, \\
		\frac{\partial \hat{S}_1}{\partial \alpha}(U_{eq};0)  =& \frac{\rho\sigma_a b(R_1'(0)\tilde{\kappa}_{1,1}(0)-R_1(0) \tilde{\kappa}'_{1,1}(0))}{2\tilde{\kappa}^2_{1,1}(0)}- \frac{1}{2}\rho\sigma_a b= 2\rho\sigma_a b.
	\end{aligned}
\end{equation*}
When $i\geq 2$, we know that $\hat{S}_i(U;0)= \frac{\rho\sigma_a(\theta) b(\theta)}{2\tilde{\kappa}_{i,i}(\alpha)}R_i(\alpha) - \rho\sigma_a(\theta) (\alpha f_{i-1} +\beta_i(\alpha) f_i)$. Analogously, it is easy to show that
\begin{equation*}
	\begin{aligned}
		\frac{\partial \hat{S}_i}{\partial u}(U_{eq};0) = 0,
		\qquad \frac{\partial \hat{S}_i}{\partial f_i}(U_{eq};0)  = -\rho \sigma_a \beta_i(0),\qquad \frac{\partial \hat{S}_i}{\partial w}(U_{eq};0)  = 0, \qquad \textit{for}~ w \neq f_i.
	\end{aligned}
\end{equation*}
Resorting to the explicit expression of $\tilde{D}$ in literature \cite{fan2020nonlinear} and equation \eqref{2eq:defin-s}, we can obtain
\begin{equation}\label{2eq:tilde-d}
	\begin{aligned}
		&\tilde{D}^{-1}(U_{eq}) = \mathbf{diag}\bigg(\beta_0^{-1}(0),~(-2b(\theta))^{-1},~\beta_2^{-1}(0),~\cdots,~\beta_N^{-1}(0)\bigg),\\
		&\tilde{D}^T\tilde{\Lambda}\tilde{D}(U_{eq})=\mathbf{diag}\bigg(\beta_0^2(0) \tilde{\kappa}_{0,0}(0),(-2b(\theta))^2\tilde{\kappa}_{1,1}(0),\beta_2^2(0) \tilde{\kappa}_{2,2}(0),\cdots, \beta_N^2(0)\tilde{\kappa}_{N,N}(0)\bigg).\\
	\end{aligned}
\end{equation}
Here $\tilde{D}^{-1}(U_{eq})$ and  $\tilde{D}^T\tilde{\Lambda}\tilde{D}(U_{eq})$ are matrices belonging in $\mathrm{R}^{(N+1)\times(N+1)}$.
In summary, the Jacobian matrix $Q_U(U_{eq};0)$ is
\begin{equation}\label{2eq:jacobi-q}
	\begin{aligned}
		Q_U(U_{eq};0) =\begin{pmatrix}
			Q_1(U_{eq};0) & 0 \\
			0 & -\rho \sigma_a I_{N\times N}
		\end{pmatrix},
	\end{aligned}
\end{equation}
where
\begin{equation}\label{2eq:radiation-Q}
	\begin{aligned}
		Q_1(U_{eq};0)& = \begin{pmatrix}
			0 & 0 & 0 & 0  \\
			0 & 0 & 0 & 0 \\
			-\rho \sigma_a b' \theta_{\rho } & -\rho \sigma_a b' \theta_{\rho v} & -\rho \sigma_a b' \theta_{\rho E} & 2\rho \sigma_a \\
			\frac{1}{2}\rho \sigma_a b' \theta_{\rho } & \frac{1}{2}\rho \sigma_a b' \theta_{\rho v} & \frac{1}{2}\rho \sigma_a b' \theta_{\rho E} & -\rho \sigma_a  \\
		\end{pmatrix}.
	\end{aligned}
\end{equation}
Obviously the rank of $Q_1(U_{eq};0)$ is $1$, so the rank of $Q_U(U_{eq};0)$ is $N+1$. 

Using equation \eqref{2eq:tilde-d}, we can rewrite $A_0(U_{eq})$ in the same block form as
\begin{equation*}
	\begin{aligned}
		&A_0(U_{eq}) = \begin{pmatrix}
			\eta_{uu}&0\\
			0&\tilde{D}^T\tilde{\Lambda}\tilde{D}(U_{eq})
		\end{pmatrix}=\begin{pmatrix}
			\begin{pmatrix}
			\eta_{uu}&0\\
			0 & 2 \\
			\end{pmatrix} & 0\\
			0&\hat{A}_0(U_{eq})_{N\times N}
		\end{pmatrix},\\
		&\hat{A}_0(U_{eq})_{N\times N} = \mathbf{diag}\bigg((-2b(\theta))^2\tilde{\kappa}_{1,1}(0),\quad \beta_2^2(0) \tilde{\kappa}_{2,2}(0),~\cdots, ~\beta_N^2(0) \tilde{\kappa}_{N,N}(0)\bigg).
	\end{aligned}
\end{equation*}
Noted that $\eta_{\rho E} = -\frac{1}{\theta}$ \cite{rohde2008dissipative}, we can obtain 
\begin{equation}\label{2eq:a0-q}
	\begin{aligned}
		A_0(U_{eq})Q_U(U_{eq};0)=\begin{pmatrix}
			H &0\\
			0&-\rho\sigma_a \hat{A}_0(U_{eq})_{N\times N}
		\end{pmatrix},
	\end{aligned}
\end{equation}
where
\begin{equation}\label{2eq:radiation-H}
\begin{aligned}
	H = \begin{pmatrix}
			\eta_{uu}&0\\
			0 & 2 \\
			\end{pmatrix} Q_1(U_{eq};0) = \begin{pmatrix}
		- \rho\sigma_a b' \frac{\theta_{\rho}}{\theta^2}\theta_{\rho} & - \rho\sigma_a b' \frac{\theta_{\rho}}{\theta^2}\theta_{\rho v} & - \rho\sigma_a b' \frac{\theta_{\rho}}{\theta^2}\theta_{\rho E}& 2\rho \sigma_a\frac{\theta_{\rho}}{\theta^2} \\
		-\rho\sigma_a b' \frac{\theta_{\rho v}}{\theta^2}\theta_{\rho} & -\rho\sigma_a b' \frac{\theta_{\rho v}}{\theta^2}\theta_{\rho v} & - \rho\sigma_a b' \frac{\theta_{\rho v}}{\theta^2}\theta_{\rho E}& 2 \rho\sigma_a\frac{\theta_{\rho v}}{\theta^2}\\
		-\rho\sigma_a b' \frac{\theta_{\rho E}}{\theta^2}\theta_{\rho} & - \rho\sigma_a b' \frac{\theta_{\rho E}}{\theta^2}\theta_{\rho v} & -\rho\sigma_a b' \frac{\theta_{\rho E}}{\theta^2}\theta_{\rho E}& 2\rho \sigma_a\frac{\theta_{\rho E}}{\theta^2} \\
		\rho\sigma_a b' \theta_{\rho} & \rho\sigma_a b' \theta_{\rho v} & \rho\sigma_a b' \theta_{\rho E}& -2\rho\sigma_a \\
	\end{pmatrix}.
\end{aligned}
\end{equation}

Take $P\in \mathrm{R}^{(N+4)\times(N+4)}$ as following
\begin{equation}\label{2eq:radiation-P}
	\begin{aligned}
		P = a\begin{pmatrix}
			P_1 & 0\\
			0 & I_{N\times N}
		\end{pmatrix},
		\qquad P_1= \begin{pmatrix}
			1 & 0 & 0 & 0 \\
			0 & 1 & 0 & 0 \\
			0 & 0 & 1 & 2 \\
			-b'\theta_{\rho } & -b'\theta_{\rho v} & -b'\theta_{\rho E} & 2 \\
		\end{pmatrix},
	\end{aligned}
\end{equation}
Here $a$ is an undetermined non-zero constant. Obviously, $P$ is an invertible matrix  since $\det(P) = 2a(1+b'\theta_{\rho E}) \neq 0$. In fact, we have 
\begin{equation*}
	\begin{aligned}
		PQ_{U} = a\begin{pmatrix}
			P_1 & 0\\
			0 & I_{N\times N}
		\end{pmatrix}
		\begin{pmatrix}
			Q_1 & 0\\
			0 & -\rho \sigma_a I_{N\times N}
		\end{pmatrix}=
		a\begin{pmatrix}
			P_1Q_1 & 0\\
			0 & -\rho \sigma_a I_{N\times N}
		\end{pmatrix} .
	\end{aligned}
\end{equation*}
Simple calculation shows that
\begin{equation*}
	P_1Q_1 = \begin{pmatrix}
		0 & 0 \\
		0 & -\rho\sigma_a( 1+b'\theta_{\rho E})
	\end{pmatrix}P_1,
\end{equation*}
Thus the first requirement of structural stability condition is met. Moreover, the third requirement of stability condition need $P$ holds the following inequality.
\begin{equation*}
	\begin{aligned}
		&A_0(U_{eq})Q_U(U_{eq};0) + Q_U^T(U_{eq};0)A_0(U_{eq})+ P^T\begin{pmatrix}
		\mathbf{diag}(0,0,0,1) & 0\\
		0 & I_{N\times N}
		\end{pmatrix} P \leq 0.
	\end{aligned}
\end{equation*}
Due to the expression \eqref{2eq:a0-q}, the above inequality is equivalent to
\begin{equation}\label{2eq:a-constraint1}
	\begin{aligned}
		&H + H^T + a^2 P_1^{T}\mathbf{diag}(0,0,0,1) P_1 \leq 0	\\
		&-2\rho\sigma_a \hat{A}_0(U_{eq})_{N\times N} + a^2 I_{N\times N} \leq 0	
	\end{aligned}
\end{equation}
%
Set
\begin{equation*}
	\begin{aligned}
		&K =(-\rho \sigma_a b'\theta_{\rho},~-\rho \sigma_a b'\theta_{\rho v},~-\rho \sigma_a b'\theta_{\rho E}, ~2\rho \sigma_a),\\
		&L =(\frac{\theta_{\rho}}{\theta^2},~\frac{\theta_{\rho v}}{\theta^2},~\frac{\theta_{\rho E}}{\theta^2},~-1).
	\end{aligned}
\end{equation*}
Then, we have
\begin{equation*}
	H = L^T K,\qquad P_1^{T}\mathbf{diag}(0,0,0,1) P_{1} = \frac{1}{\rho^2\sigma_a^2}K^T K.
\end{equation*}
Hence, the first inequality can be rewritten as
\begin{equation*}
	\begin{aligned}
			&H + H^T + a^2P_1^{T}\mathbf{diag}(0,0,0,1) P_1 \\
			&= L^T K + K^T L + \frac{a^2}{\rho^2\sigma_a^2}K^T K\\
			&=(L^T + \frac{a^2}{2\rho^2\sigma_a^2}K^T)K + K^T(L + \frac{a^2}{2\rho^2\sigma_a^2}K).
	\end{aligned}
\end{equation*}
The above matrix is semi-negative definite is equivalent to $(L + \frac{a^2}{2\rho^2\sigma_a^2}K)K^T \leq 0$, which means that
\begin{equation}\label{2eq:a-constraint3}
	a^2 \leq \rho\sigma_a\frac{4+2 b'(\theta_{\rho }^2+\theta_{\rho v}^2+\theta_{\rho E}^2)/\theta^2}{4 + b'^2(\theta_{\rho }^2+\theta_{\rho v}^2+\theta_{\rho E}^2)}.
\end{equation}

According to inequalities \eqref{2eq:a-constraint1} and \eqref{2eq:a-constraint3}, 
if the non-zero constant $a$ satisfies the following constraints
\begin{equation*}
	\begin{aligned}
		a^2 \leq \min_{2\leq k\leq N}\bigg\{&2\rho\sigma_a \beta_k(0) \tilde{\kappa}_{k,k}(0),\quad \frac{16}{3}\rho\sigma_a b^2,\quad \rho\sigma_a\frac{4+2 b'(\theta_{\rho }^2+\theta_{\rho v}^2+\theta_{\rho E}^2)/\theta^2}{4 + b'^2(\theta_{\rho }^2+\theta_{\rho v}^2+\theta_{\rho E}^2)}\bigg\},
	\end{aligned}
\end{equation*}
the $P$ matrix defined in \eqref{2eq:radiation-P} satisfies the structural stability condition. And 
\begin{equation*}
	\rho>0, \quad \sigma_a>0, \quad b>0, \quad b'>0, \quad \beta_k(0) \tilde{\kappa}_{k,k}(0)>0.
\end{equation*}
The value space of $a$ is obviously not empty.

Consequently, we conclude the following theorem. 
\begin{theorem}\label{theorem-stability}
	 The Euler-HMP$_N$ system \eqref{4eq:U_equ} satisfies Yong's structural stability condition which's symmetrizer $A_0$ and $P$ are defined in \eqref{2eq:radiation-A0} and \eqref{2eq:radiation-P}.
\end{theorem}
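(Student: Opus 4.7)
The plan is to verify the three items of the structural stability condition one by one, using the block structure of the system \eqref{4eq:U_equ} and the explicit formulas collected in the text preceding the statement.

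First I would address item (ii), the symmetrizability. Since $A(U;\varepsilon)$ is block-diagonal with blocks $\varepsilon F_u(u)$ and $\tilde{D}^{-1}\tilde{M}\tilde{D}$, and since the Hessian $\eta_{uu}$ of the classical Euler entropy $\eta(u)=-\rho s(\rho,e)$ is a symmetric positive definite symmetrizer of the Euler block, it suffices to show that $\tilde{D}^T\tilde{\Lambda}\tilde{D}$ is symmetric positive definite and symmetrizes $\tilde{D}^{-1}\tilde{M}\tilde{D}$. The first follows because $\tilde{\Lambda}=\mathbf{diag}(\tilde{\kappa}_{0,0},\ldots,\tilde{\kappa}_{N,N})$ has positive entries by \eqref{2-3:kappa-prop} and $\tilde{D}$ is invertible; the second is immediate from $\tilde{\Lambda}\tilde{M}=\langle\mu\tilde{\Phi}^{[\alpha]},(\tilde{\Phi}^{[\alpha]})^T\rangle_{\tilde{\mathbb{H}}_N^{[\alpha]}}$ being symmetric, which yields $A_0(U)A(U;\varepsilon)=A(U;\varepsilon)^T A_0(U)$ for all $U\in G$ and $\varepsilon$.

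Next I would verify item (i). The Jacobian $Q_U(U_{eq};0)$ has been computed in \eqref{2eq:jacobi-q}-\eqref{2eq:radiation-Q}; it is block diagonal with a $4\times 4$ block $Q_1(U_{eq};0)$ of rank one and a lower block $-\rho\sigma_a I_{N\times N}$. Taking $P$ as in \eqref{2eq:radiation-P} and using the product computation already displayed, one obtains
\begin{equation*}
P_1 Q_1 = \begin{pmatrix} 0 & 0 \\ 0 & -\rho\sigma_a(1+b'\theta_{\rho E})\end{pmatrix} P_1,
\end{equation*}
so that $P Q_U = \mathrm{diag}(0,\,S(U))\,P$ with $S(U)=\mathrm{diag}(-\rho\sigma_a(1+b'\theta_{\rho E}),\,-\rho\sigma_a I_{N\times N})$, which is invertible of size $r=N+1$. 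The invertibility of $P_1$ follows from $\det P_1=2(1+b'\theta_{\rho E})\neq 0$ under the thermodynamic assumptions.

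For item (iii), I would exploit the same block decomposition and show the inequality block-by-block as reduced in \eqref{2eq:a-constraint1}. The lower block inequality $-2\rho\sigma_a\hat{A}_0(U_{eq})_{N\times N}+a^2 I_{N\times N}\leq 0$ holds whenever $a^2\leq 2\rho\sigma_a\min\{\,4b^2\tilde{\kappa}_{1,1}(0),\,\beta_k^2(0)\tilde{\kappa}_{k,k}(0):2\leq k\leq N\,\}$. For the upper $4\times 4$ block, the crucial observation (already exploited in the text) is the rank-one factorisation $H=L^T K$ and $P_1^T\mathrm{diag}(0,0,0,1)P_1=\rho^{-2}\sigma_a^{-2}K^T K$, which reduces the matrix inequality to the scalar condition $(L+\frac{a^2}{2\rho^2\sigma_a^2}K)K^T\leq 0$; this yields the explicit bound \eqref{2eq:a-constraint3}. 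Since the right-hand sides of all these constraints are strictly positive on $G_{eq}$ by the thermodynamic assumptions and the positivity statements at the end of Section~\ref{sec3-2}, a nonzero $a$ satisfying all the bounds simultaneously exists, and the theorem follows.

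The main obstacle is the third item: one has to make the matrix inequality tractable without losing information, and the rank-one structure $H=L^T K$ together with the specific choice of $P_1$ (whose last row is tuned to $K$) is what makes the bound on $a$ explicit rather than a vacuous assertion. Item (i) and the positive-definiteness part of item (ii) are essentially bookkeeping once $P$ and $A_0$ are correctly chosen.
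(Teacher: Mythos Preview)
Your proposal is correct and follows essentially the same route as the paper: block-diagonal verification of the symmetrizer for item~(ii), the explicit similarity $P_1Q_1=\mathrm{diag}(0,-\rho\sigma_a(1+b'\theta_{\rho E}))P_1$ for item~(i), and the rank-one factorisation $H=L^TK$ together with $P_1^T\mathrm{diag}(0,0,0,1)P_1=\rho^{-2}\sigma_a^{-2}K^TK$ to reduce item~(iii) to a scalar bound on~$a$. Your bound $a^2\le 2\rho\sigma_a\beta_k^2(0)\tilde{\kappa}_{k,k}(0)$ for the lower block is in fact the one dictated by the diagonal entries of $\hat{A}_0(U_{eq})$ in \eqref{2eq:tilde-d}; the paper's displayed constraint writes $\beta_k(0)$ there, which appears to be a typo.
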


\section{Non-relativistic limit}\label{4sec}

In this section, we analyze the non-relativistic limit of the radiation hydrodynamics system \eqref{4eq:radiation_hy_equ}. In other word, we focus on singular limits $\varepsilon\rightarrow 0$ of the following system
\begin{equation}\label{4eq:hy_equ}
	\partial_t U + \frac{1}{\varepsilon}A(U;\varepsilon)\partial_x U = \frac{1}{\varepsilon^2} Q(U;\varepsilon).
\end{equation}
Here $U$, $A(U;\varepsilon)$ and $Q(U;\varepsilon)$ are demonstrated in \eqref{4eq:U_equ}.

As we mentioned before, Lattanzio and Yong \cite{lattanzio2001hyperbolic}, Peng and Wasiolek\cite{peng2016parabolic} studied the singular limits of initial-value problems for first-order quasilinear hyperbolic systems with stiff source terms. Under appropriate stability conditions and the existence of approximate solutions, they justified rigorously the validity of the asymptotic expansion on a time interval independent of the parameter. 
However, the system \eqref{4eq:hy_equ} that the coefficient matrix and the source terms both depending on $\varepsilon$ are not considered, which introduce some additional terms.   

For convenience, we rewrite the equations of hydrodynamical variables (the fist three equations in \eqref{4eq:radiation_hy_equ}) as following conservative form 
\begin{equation}\label{4eq:rhov-equ}
	\begin{aligned}
		& \partial_t \rho + \partial_x ( \rho v) = 0, \\
		& \partial_t (\rho v + \varepsilon E_1) + \partial_x ( \rho v^2 +  p +  E_2) = 0, \\
		& \partial_t (\rho E + E_0) + \partial_x (\rho E v +  p v) + \frac{1}{\varepsilon}  \partial_x E_1 = 0.\\
	\end{aligned}
\end{equation}
Here we use the first two moment equations of radiative transfer equation \eqref{moment-equ-rte}:
\begin{equation}\label{equ:moment-E1}
	\begin{aligned}
		&\varepsilon \partial_t E_0 + \partial_x E_1 = S_E,\\
		&\varepsilon \partial_t E_1 + \partial_x E_2 = S_F.
	\end{aligned}
\end{equation}
Owing to the relation \eqref{2-3equ:f-moment-relation}, we know that
\begin{equation*}
	E_0 = \kappa_{0,0}f_0,\qquad E_1 = \kappa_{1,0}f_0,\qquad E_2 = \kappa_{2,2}f_2 + \kappa_{2, 0}f_0.
\end{equation*}

We introduce $\tilde{U} = (\tilde{u}, \tilde{w})^T$ with
\begin{equation}\label{4eq:tilde-u-def}
	\begin{aligned}
		&\tilde{u} = (\rho, ~ \rho v + \varepsilon \kappa_{1,0}f_0, ~ \rho E + \kappa_{0,0}(\alpha)f_0)^T,\\
		&\tilde{w} = (\kappa_{0,0}(\alpha) f_0 - b(\theta), ~ \alpha, ~f_2, ~ \cdots, ~f_N)^T
	\end{aligned}
\end{equation}
and set $\tilde{U}_{eq}=\tilde{U}(U_{eq})$. Then, the systems \eqref{4eq:U_equ} can be rewritten as
\begin{equation}\label{4eq:U_tilde_equ}
	\partial_t \tilde{U} + \frac{1}{\varepsilon}\tilde{A}(\tilde{U};\varepsilon)\partial_x \tilde{U} = \frac{1}{\varepsilon^2}\tilde{Q}(\tilde{U};\varepsilon)
\end{equation}
with
\begin{equation}\label{4eq:a-tilde}
	\begin{aligned}
		\tilde{A}(\tilde{U};\varepsilon) &= D_{U}\tilde{U}\begin{pmatrix}
			\varepsilon F_u(u)&0\\
			0&\tilde{D}^{-1}\tilde{M}\tilde{D}
		\end{pmatrix}(D_{U}\tilde{U})^{-1}, \\
		\tilde{Q}(\tilde{U};\varepsilon) &= D_{U}\tilde{U}Q(U(\tilde{U})) = \begin{pmatrix}
			0 \\ q(\tilde{U};\varepsilon)
			\end{pmatrix}.
	\end{aligned}
\end{equation}
Here, the transformation matrix of $U \rightarrow \tilde{U}$ is 
\begin{equation}\label{4eq:D_U_tilde{U}}
	\begin{aligned}
		D_U\tilde{U} = 
		\begin{pmatrix}
	 		\begin{pmatrix}
				1 & 0 & 0 & 0 & 0\\
				0 & 1 & 0 & \varepsilon \kappa_{1,0} & \varepsilon \kappa_{1,0}'f_0\\
				0 & 0 & 1 & \kappa_{0,0} & \kappa_{0,0}'f_0\\
				-b'\theta_{\rho} & -b'\theta_{\rho v} & -b'\theta_{\rho E} & \kappa_{0,0} & \kappa'_{0,0}f_0\\
				0 & 0 & 0 & 0 & 1
			\end{pmatrix}
			& 0_{5\times(N-1)}\\
			0_{(N-1)\times 5} & I_{(N-1)\times (N-1)}
		\end{pmatrix}.
	\end{aligned}
\end{equation}
Using formulas \eqref{2lemma:kappa}, a routine computation gives rise to the determination of $D_U\tilde{U}$ is
\begin{equation*}
	\begin{aligned}
		\mathrm{det}(D_U\tilde{U}) &= \kappa_{0,0} + b'\theta_{\rho E}\kappa_{0,0} + \varepsilon b'\theta_{\rho v}\kappa_{1,0}\\
		&=\bigg((3+\alpha^2)(1+b'\theta_{\rho E})-\varepsilon 4\alpha b'\theta_{\rho v}\bigg)\frac{2}{3(1-\alpha^2)^3}.
	\end{aligned}
\end{equation*}
Note that $\alpha \in (-1,1)$ and $\theta_{\rho E}>0$ \cite{rohde2008dissipative}. Then when $\varepsilon=0$, we have
\begin{equation*}
	\mathrm{det}(D_U\tilde{U})= \frac{2(3 +\alpha^2)}{3(1-\alpha^2)^3} (1+b'\theta_{\rho E})>0.
\end{equation*}
Thus, there exits $\varepsilon_0>0$ such that $\mathrm{det}(D_U\tilde{U}) \neq 0$ for  $\varepsilon \in [0,\varepsilon_0]$. Therefore, we assume $\varepsilon \in [0, \varepsilon_0]$ for the system \eqref{4eq:U_tilde_equ}.

The system \eqref{4eq:U_tilde_equ} also satisfies Yong's structural stability condition. 
It is apparent from \eqref{2eq:radiation-equilibrium} that $\tilde{w}=0$ on equilibrium manifold.
On the equilibrium manifold, we have 
\begin{equation*}
	\begin{aligned}
		\partial_{\tilde{U}}\tilde{Q} = \partial_{\tilde{U}}\bigg(D_{U}\tilde{U}Q\bigg) = \partial_{U}(D_{U}\tilde{U})D_{U}\tilde{U}Q + D_{U}\tilde{U}Q_{U}(D_{U}\tilde{U})^{-1}  = D_{U}\tilde{U}Q_{U}(D_{U}\tilde{U})^{-1}.
	\end{aligned}
\end{equation*}
Here we use $Q(U_{eq})=0$.
Set $\tilde{P}=P (D_{U}\tilde{U})^{-1}(\tilde{U}_{eq};0)$, in which $P$ expressed in \eqref{2eq:radiation-P}. On the equilibrium manifold, we see that
\begin{equation*}
	\begin{aligned}
		&\kappa_{0,0}(0)=2,\qquad \kappa'_{0,0}(0)=0.\\
	\end{aligned}
\end{equation*}
A straightforward calculation gives rise to $P=a D_{U}\tilde{U}(\tilde{U}_{eq};0)$, which $a$ is the constant in \eqref{2eq:radiation-P}. Thus $\tilde{P}$ is a scalar matrix. Moreover,
\begin{equation*}
	\tilde{P}\tilde{Q}_{\tilde{U}} \tilde{P}^{-1} = P (D_{U}\tilde{U})^{-1} D_{U}\tilde{U}Q_{U}(D_{U}\tilde{U})^{-1}D_{U}\tilde{U}P^{-1} = PQ_{U}P^{-1}.
\end{equation*}
This means that system \eqref{4eq:U_tilde_equ} satisfies the first requirement of structural stability condition. For the second requirement, the symmetrizer of system \eqref{4eq:U_tilde_equ} is $\tilde{A_0} = \tilde{A_0}(\tilde{U};\varepsilon) = (D_{U}\tilde{U})^{-T}A_0 (D_{U}\tilde{U})^{-1}$.
A simple computation shows that the system also satisfies third requirement of structural stability condition. 

For further discussions, we analyze $\tilde{A}(\tilde{U};0)$. In Appendix \ref{appendix}, we show that 
\begin{equation*}
	\tilde{A}^{11}(\tilde{U}_{eq};0) =0,\qquad \partial_{\tilde{u}} \tilde{A}^{11}(\tilde{U}_{eq};0)=0,
\end{equation*}
and $\tilde{A}^{21}(\tilde{U}_{eq};0)$ is not full--rank matrix. See details in Appendix \ref{appendix}.

For the convenience of writing, we omit the superscript below. Then the system \eqref{4eq:U_tilde_equ} have the following form
\begin{equation}\label{4eq:keyequ}
		\partial_t U + \frac{1}{\varepsilon}A(U;\varepsilon)\partial_x U = \frac{1}{\varepsilon^2}Q(U;\varepsilon),
\end{equation}
with initial conditions
\begin{equation}\label{initial-condition-key}
		U(x,0) = \bar{U}(x,\varepsilon).
\end{equation}
Here $U =(u, w)^T \in G \subset \mathrm{R}^{N+4}$, and
\begin{equation}\label{4eq:u-definition}
	u = (\rho, ~\rho v + \varepsilon \kappa_{1,0}f_0, ~\rho E + \kappa_{0,0}f_0)^T\in\mathrm{R}^{3}, \quad w = (\kappa_{0,0} f_0 - b(\theta), ~\alpha, ~f_2, ~\cdots, ~f_N)^T\in\mathrm{R}^{N+1}.
\end{equation}
Here $A=A(U;\varepsilon)$ and $Q=Q(U;\varepsilon)$ are the respective $n\times n$--matrix function and $n$--vector functions of $(U;\varepsilon)\in G\times [0, \varepsilon_0]$. The parameter $\varepsilon \in [0, \varepsilon_0]$. The state space $G$ is a open convex set, which defined as
\begin{equation*}
	G=\bigg \{U =(u, w) : \rho > 0, ~\theta > 0, ~\alpha \in (-1,1) \bigg \}.
\end{equation*} 
The equilibrium manifold is
\begin{equation*}
	G_{eq}=\bigg\{ U \in G : w = 0 \bigg\}.
\end{equation*} 

\begin{lemma}\label{4lemma1}
The system \eqref{4eq:keyequ} satisfies following properties.
	\begin{enumerate}[(i)]
		\item The source term has the following form: 
		\begin{equation*}
		Q(U;\varepsilon) = \begin{pmatrix}
			0 \\ q(U;\varepsilon)
		\end{pmatrix},\qquad q(U;\varepsilon) \in \mathrm{R}^{(N+1)},
		\end{equation*}
			and 
		\begin{equation*}
			q(U;\varepsilon) = 0 \Leftrightarrow w = 0,
		\end{equation*}
		for all $u$
		\begin{equation*}
		\partial_u q(U_{eq};0) = 0, \qquad \partial_w q(U_{eq};0) ~ \mbox{invertible}
		\end{equation*}
		\item The system \eqref{4eq:keyequ} satisfies Yong's structural stability condition and $P$ is a scalar matrix;
		\item $A^{11}(U_{eq};0) = 0$ and $\partial_u A^{11}(U_{eq};0) = 0$ for all $U_{eq}\in G_{eq}$;
		\item $A_0(U_{eq};0) = \mathrm{diag}(A^{11}_0(U_{eq};0),~A^{22}_0(U_{eq};0))$ is a block diagonal matrix.
	\end{enumerate}		
\end{lemma}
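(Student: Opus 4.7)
My plan is to verify the four items in order, relying on the explicit form of the change-of-variables matrix $J := D_U\tilde{U}$ displayed in \eqref{4eq:D_U_tilde{U}} and on the moment identities collected in \eqref{2lemma:kappa} and Lemma \ref{2lemma:R_i}. For (i), the block form $Q = (0, q)^T$ is built into the definition \eqref{4eq:a-tilde}: the new hydrodynamic variables $(\rho,\, \rho v + \varepsilon E_1,\, \rho E + E_0)$ are precisely the combinations making the fluid equations conservative as in \eqref{4eq:rhov-equ}, so all source terms migrate into the radiation block. The equivalence $q(U;\varepsilon) = 0 \Leftrightarrow w = 0$ is the coordinate description of the equilibrium manifold: $w = (\kappa_{0,0}f_0 - b,\, \alpha,\, f_2,\dots,f_N) = 0$ is exactly the condition \eqref{2eq:radiation-equilibrium}, and \eqref{2eq:s-hat} shows that $\hat{S}(U;\varepsilon) = 0$ iff these components vanish. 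The derivative conditions follow from $\partial_{\tilde{U}}\tilde{Q}|_{eq} = J\cdot Q_U(U_{eq};0)\cdot J^{-1}$ (valid because $Q(U_{eq}) = 0$) combined with the block expression \eqref{2eq:jacobi-q}: the rank-one hydrodynamic block $Q_1(U_{eq};0)$ has image along the direction of the new coordinate $\kappa_{0,0}f_0 - b$, so after conjugation its linearization lives entirely in the $w$-block, forcing $\partial_u q(U_{eq};0) = 0$, while the diagonal block $-\rho\sigma_a I_{N\times N}$ survives to guarantee invertibility of $\partial_w q(U_{eq};0)$.

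Property (ii) is effectively proved in the paragraph immediately preceding the lemma: structural stability of \eqref{4eq:U_equ} from Theorem \ref{theorem-stability} transfers to \eqref{4eq:U_tilde_equ} via $\tilde{P} = P J^{-1}$, and the identities $\kappa_{0,0}(0) = 2$, $\kappa'_{0,0}(0) = 0$, $\kappa_{1,0}(0) = 0$ give $P = a\, J(\tilde{U}_{eq};0)$, hence $\tilde{P} = aI$ is a scalar matrix. For (iii) I exploit the sparse form of $J$ at $(\tilde{U}_{eq};0)$: by the same moment identities, the top-right block of $J$ has only one nonzero entry, $\kappa_{0,0}(0) = 2$ at position $(3,1)$, and the bottom-left block has only one nonzero row $(-b'\theta_\rho,\,-b'\theta_{\rho v},\,-b'\theta_{\rho E})$. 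Writing $\tilde{A}(\tilde{U};0) = J\,\mathrm{diag}(0,\,\tilde{D}^{-1}\tilde{M}\tilde{D})\,J^{-1}$, the factor $\mathrm{diag}(0,\cdot)$ zeroes out the first three columns on the right, so $\tilde{A}^{11}(\tilde{U}_{eq};0)$ collapses to a product between the sparse top-right block of $J$, the first column of $\tilde{D}^{-1}\tilde{M}\tilde{D}$ evaluated at $\alpha=0$, and the first row of the bottom-left block of $J^{-1}$ computed via the Schur complement $S = D - CB = \mathrm{diag}(2(1+b'\theta_{\rho E}),1,\dots,1)$; the equilibrium values in \eqref{2lemma:kappa} and \eqref{2eq:tilde-d} force this product to vanish. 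Differentiating in $u$ and using that $\alpha$ and each $f_i$ (for $i\geq 2$) are \emph{independent} coordinates of $\tilde{U}$ kills the remaining terms and yields $\partial_u\tilde{A}^{11}(\tilde{U}_{eq};0) = 0$.

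For (iv), I seek a block-diagonal symmetrizer of $\tilde{A}$ at $(\tilde{U}_{eq};0)$. Since (iii) gives $\tilde{A}^{11}(\tilde{U}_{eq};0) = 0$, a block-diagonal ansatz $A_0 = \mathrm{diag}(A_0^{11}, A_0^{22})$ symmetrizes $\tilde{A}$ at equilibrium precisely when the compatibility $A_0^{11}\tilde{A}^{12} = (\tilde{A}^{21})^T A_0^{22}$ holds together with symmetry of $A_0^{22}\tilde{A}^{22}$. The rank deficiency of $\tilde{A}^{21}(\tilde{U}_{eq};0)$ recorded just before the lemma makes the first compatibility equation underdetermined, leaving room to choose SPD blocks $A_0^{11}$ and $A_0^{22}$ that satisfy both constraints and that reduce at equilibrium to $\eta_{uu}$ and a positive multiple of $\tilde{D}^T\tilde{\Lambda}\tilde{D}$ from \eqref{2eq:radiation-A0}; since Yong's structural stability condition does not uniquely specify $A_0$, this freedom is available. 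The principal obstacle throughout (iii) and (iv) is the combinatorial bookkeeping: each block of $J$ and $J^{-1}$ simplifies drastically at $(\tilde{U}_{eq};0)$ thanks to the moment identities, yet verifying the cancellation of cross-terms requires carefully tracking the Schur complement through the block-inverse formula and the conjugation of $\tilde{M}$ by the moment-weight matrices. This detailed calculation is precisely what I would defer to the Appendix referenced in the preceding paragraph.
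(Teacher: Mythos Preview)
Your treatment of items (i)--(iii) is essentially the same route the paper takes: (i) and (ii) are consequences of the discussion immediately preceding the lemma (the block form of $\tilde{Q}$, the equivalence $q=0 \Leftrightarrow w=0$, the conjugation $\tilde{Q}_{\tilde U}|_{eq} = J\,Q_U\,J^{-1}$, and the identity $P = a\,J(\tilde{U}_{eq};0)$ giving $\tilde P = aI$), and (iii) is deferred to the Appendix, where the computation you sketch with the sparse block structure of $J$ and the vanishing of $\tilde{M}_{11}(0)$ is carried out in detail.

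For (iv), however, you take a genuinely different and considerably more laborious path than the paper. You attempt to \emph{construct} a block-diagonal symmetrizer by ansatz, checking the compatibility $A_0^{11}\tilde A^{12} = (\tilde A^{21})^T A_0^{22}$ and invoking the rank deficiency of $\tilde A^{21}$ plus non-uniqueness of symmetrizers to argue that block-diagonal choices exist. This is a detour: the lemma concerns the \emph{specific} symmetrizer $\tilde A_0 = J^{-T}A_0 J^{-1}$ already fixed by the structural stability analysis, so an existence argument based on freedom of choice does not directly settle the claim. The paper's argument is one line: a general consequence of Yong's structural stability condition (Theorem~2.2 in \cite{yong1999singular}) is that $P^{-T}A_0(U_{eq};0)P^{-1}$ is automatically block diagonal; since you have already established in (ii) that $P$ is a scalar matrix, it follows immediately that $A_0(U_{eq};0)$ itself is block diagonal. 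This argument avoids all the Schur-complement bookkeeping you anticipate, and it is worth knowing because the same mechanism recurs throughout the relaxation-limit literature.
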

\begin{proof}
Obviously, the first two terms are clearly established. The proof of the third term is exhibited in Appendix \ref{appendix}.
As shown in Theorem 2.2 in \cite{yong1999singular}, the structural stability condition imply that $P^{-T} A_0(U_{eq};0)P^{-1}$ is a block diagonal matrix. Moreover, $P$ is a scalar matrix, so $A_0(U_{eq};0)$ is a block diagonal matrix.
\end{proof}

Through the previous discussion, we can see that the coefficient matrix $A(U;\varepsilon)$ and the source term $Q(U;\varepsilon)$ are smoothly dependent on $U$ and $\varepsilon$. The symmetrizer $A_0$ is also a smooth function of $U$ and $\varepsilon$.

Assuming that the initial value of the equation is periodic and smooth, according to Kato  \cite{kato1975cauchy}, for all integer $s>\frac{3}{2}$, there exists a maximal time $T_\varepsilon>0$ such that problem \eqref{4eq:keyequ}--\eqref{initial-condition-key} admits a unique local-in-time smooth solution $U^\varepsilon$ satisfying 
\begin{equation*}
	U^\varepsilon \in \mathrm{C}([0,T_\varepsilon),H^s)\cap \mathrm{C}^1([0,T_\varepsilon),H^{s-1}).
\end{equation*}
The central problem of the study is to show that $U^\varepsilon$ converges as $\varepsilon \rightarrow 0$ and $\inf T_\varepsilon>0$. To do this, we study the approximate solution of \eqref{4eq:keyequ}.  

We end this section with stating several calculus inequalities in Sobolev spaces \cite{majda2012compressible}, two elementary facts \cite{yong1999singular} related to ordinary differential equations and the notation involved in this paper. Their proofs can be found in \cite{yong1999singular} and references cited therein.

\begin{lemma}[Calculus inequalities]\label{4lemma:calculus_inequ}
	Let $s, s_1,$ and $s_2$ be three nonnegative integers, and $s_0=[D/2]+1$.
	\begin{enumerate}
		\item If $s_3=min\{s_1, s_2,s_1+s_2-s_0\}\geq 0$, then $H^{s_1}H^{s_2}\subset H^{s_3}$. Here the inclusion symbol $\subset$ implies the continuity of the embedding.
		\item Suppose $s>s_0+1, A\in H^s$, and $Q\in H^{s-1}$, Then for all multi-indices $\alpha$ with $\alpha \leq s$, $[A, \partial_\alpha ]Q\equiv A\partial^\alpha Q-\partial^\alpha (A Q)\in L^2$ and
			\begin{equation*}	
				\|A\partial^\alpha Q-\partial^\alpha (A Q)\|\leq C_s\|A\|_s\|Q\|_{|\alpha |-1};
			\end{equation*}
		\item Suppose $s>s_0$, $A\in C_b^s(G)$ and $V\in H^{s}(R^d, G)$. Then $A(V( \cdot))\in H^s$ and
				\begin{equation*}	
					\|A(V(\cdot))\|_s\leq C_s|A|_s(1+\|V\|_s^s).
				\end{equation*}
	\end{enumerate}
\end{lemma}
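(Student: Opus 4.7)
The plan is to derive all three estimates from the single analytic ingredient that $H^{s_0}(\mathbb{R}^D)$ embeds continuously into $L^\infty(\mathbb{R}^D)$ when $s_0=[D/2]+1$, together with the more general Gagliardo--Nirenberg interpolation for intermediate $L^p$ norms. Once these are in hand, items (1)--(3) can be proved in order, each building on the previous one: (1) is the workhorse, (2) is an immediate consequence of (1) applied to the Leibniz expansion of the commutator, and (3) follows from (1) via a Faà di Bruno-type chain-rule expansion.

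For item (1), I fix a multi-index $\alpha$ with $|\alpha|\leq s_3$ and apply Leibniz:
\begin{equation*}
\partial^\alpha(AB)=\sum_{\beta\leq\alpha}\binom{\alpha}{\beta}\partial^\beta A\cdot\partial^{\alpha-\beta}B.
\end{equation*}
For each term I bound $\|\partial^\beta A\cdot\partial^{\alpha-\beta}B\|_{L^2}$ via Hölder $\|fg\|_{L^2}\leq\|f\|_{L^p}\|g\|_{L^q}$ with $1/p+1/q=1/2$, choosing $p,q$ so that Gagliardo--Nirenberg controls
\begin{equation*}
\|\partial^\beta A\|_{L^p}\leq C\|A\|_{H^{s_1}},\qquad\|\partial^{\alpha-\beta}B\|_{L^q}\leq C\|B\|_{H^{s_2}}.
\end{equation*}
The constraint $|\beta|+|\alpha-\beta|\leq s_3=\min\{s_1,s_2,s_1+s_2-s_0\}$ is exactly what is needed for such exponents to exist. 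In the borderline case $|\beta|=s_1$ (or symmetrically $|\alpha-\beta|=s_2$), I instead place the ``small'' factor in $L^\infty$ via the Sobolev embedding and leave the other in $L^2$.

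For item (2), the commutator identity
\begin{equation*}
[A,\partial^\alpha]Q=-\sum_{0<\beta\leq\alpha}\binom{\alpha}{\beta}\partial^\beta A\cdot\partial^{\alpha-\beta}Q
\end{equation*}
eliminates the dangerous top-order term $A\,\partial^\alpha Q$, so every remaining summand carries at least one derivative on $A$ and at most $|\alpha|-1$ derivatives on $Q$. I then invoke item (1) summand-by-summand, viewing $\partial^\beta A\in H^{s-|\beta|}$ and $\partial^{\alpha-\beta}Q\in H^{(|\alpha|-1)-|\alpha-\beta|}$, and check that the hypothesis $s>s_0+1$, combined with $|\alpha|\leq s$, places the product indices in the admissible range of (1), yielding the $L^2$ norm bound by $C_s\|A\|_s\|Q\|_{|\alpha|-1}$.

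For item (3), I use the Faà di Bruno formula to express $\partial^\alpha A(V(\cdot))$ as a finite sum, over set partitions of $\alpha$ into nonempty parts $\beta_1,\dots,\beta_k$, of terms of the form
\begin{equation*}
A^{(k)}(V)\cdot\prod_{i=1}^{k}\partial^{\beta_i}V.
\end{equation*}
The factor $A^{(k)}(V)$ is bounded pointwise in $L^\infty$ by $|A|_{C^k_b(G)}\leq|A|_s$, while the multilinear product $\prod_i\partial^{\beta_i}V$ is controlled in $L^2$ by iterating item (1), each iteration costing one power of $\|V\|_{H^s}$; summing over $k\leq|\alpha|\leq s$ yields the claimed bound $C_s|A|_s(1+\|V\|_s^s)$. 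The main technical obstacle throughout is the careful bookkeeping in item (1) to guarantee that the Hölder/Gagliardo--Nirenberg exponents remain admissible in the corner case $s_3=s_1+s_2-s_0$; once this is dealt with, items (2) and (3) reduce to essentially algebraic rearrangements.
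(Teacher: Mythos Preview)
Your outline is a correct sketch of the standard proofs of these Sobolev calculus inequalities. However, you should note that the paper does not actually prove this lemma: it is stated as a known tool with the sentence ``Their proofs can be found in \cite{yong1999singular} and references cited therein,'' and the lemma itself is attributed to \cite{majda2012compressible}. So there is no proof in the paper to compare against; the authors simply import the result.

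That said, your approach matches what one finds in those references. A few small remarks on the bookkeeping: in item (2), when you write $\partial^{\alpha-\beta}Q\in H^{(|\alpha|-1)-|\alpha-\beta|}$, the relevant index is $|\beta|-1$, and the verification that item (1) applies boils down to $s-|\beta|+(|\beta|-1)-s_0=s-1-s_0\geq 0$, which is exactly the hypothesis $s>s_0+1$ (in fact $s\geq s_0+1$ suffices here; the strict inequality in the statement is a convenience). In item (3), be aware that on all of $\mathbb{R}^D$ one usually needs $A(0)=0$ to get $A(V)\in L^2$; in the paper's setting the spatial domain is a periodic interval, so the constant term causes no trouble and your argument goes through as stated.
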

Here and below $C_s$ denotes a generic constant depending only on $s, n$ and $D$, and $|A|_s$ stand for $\sup_{u\in G_0,|\alpha|\leq s}|\partial^\alpha_u A(u)|$.

\begin{lemma}\cite{yong1999singular}\label{4lemma:decay}
	Suppose $A(x,\tau)\in C([0,\infty),H^s)$ with $s>\frac{3}{2}$, $f(x,\tau)\in C([0,\infty),L^2)$, $\parallel f(\tau) \parallel$ decays exponentially to zero as $\tau$ goes to infinity, and $E(x),S(x)\in L^\infty$ are uniformly positive definite symmetric matrices such that for all sufficiently large $\tau$ and fo all $x$, 
	\begin{equation*}
		E(x)A(x,\tau)+A^T(x,\tau)E(x)\leq -S(x).
	\end{equation*}
	If $V(x,\tau)\in C^1([0,\infty),L^2)$ satisfies
	\begin{equation*}
		\frac{\mathrm{d} V}{\mathrm{d} \tau} = A(x,\tau)V+f(x,\tau),
	\end{equation*}
	then $\parallel V\parallel$ decays exponentially to zero as $\tau$ goes to infinity. Moreover, if $V(x,\tau),f(x,\tau)\in C([0,\infty),H^s)$ and $\parallel f(\tau) \parallel_s$ decays exponentially to zero as $\tau$ goes to infinity, then $\parallel V\parallel_s$ decays exponentially to zero as $\tau$ goes to infinity.
\end{lemma}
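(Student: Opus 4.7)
The plan is to run a weighted $L^2$ energy estimate with $E(x)$ as the weight, translate the algebraic dissipation inequality into an exponential decay rate, and then bootstrap to $H^s$ via commutator estimates.

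First I would define the energy $\mathcal{E}_0(\tau) := \int E(x) V(x,\tau)\cdot V(x,\tau)\,dx$. Since $E$ is uniformly positive definite and bounded, $\mathcal{E}_0$ is equivalent to $\|V\|^2$. Differentiating in $\tau$, using the symmetry of $E$ and the ODE $V_\tau = AV + f$, gives
\begin{equation*}
\frac{d\mathcal{E}_0}{d\tau} \;=\; \int (EA+A^TE)V\cdot V\,dx \;+\; 2\int EV\cdot f\,dx \;\le\; -\int SV\cdot V\,dx + 2\int EV\cdot f\,dx
\end{equation*}
for all sufficiently large $\tau$. Uniform positivity of $S$ and $E$ provides a constant $\lambda>0$ with $S(x)\ge \lambda E(x)$, and Young's inequality absorbs the forcing:
\begin{equation*}
\frac{d\mathcal{E}_0}{d\tau} \;\le\; -\tfrac{\lambda}{2}\mathcal{E}_0 + C\|f(\tau)\|^2.
\end{equation*}
Since $\|f(\tau)\|$ decays exponentially by hypothesis, the integrating-factor formula $\mathcal{E}_0(\tau)\le e^{-\lambda\tau/2}\mathcal{E}_0(\tau_0) + C\int_{\tau_0}^{\tau} e^{-\lambda(\tau-s)/2}\|f(s)\|^2\,ds$ yields exponential decay of $\mathcal{E}_0$, hence of $\|V\|$.

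For the $H^s$ statement I would induct on the order of differentiation. For a multi-index $\alpha$ with $|\alpha|\le s$, applying $\partial^\alpha$ to the equation produces
\begin{equation*}
\partial_\tau(\partial^\alpha V) \;=\; A\,\partial^\alpha V \;+\; [\partial^\alpha,A]V \;+\; \partial^\alpha f.
\end{equation*}
The same energy argument applied to $\int E\,\partial^\alpha V\cdot\partial^\alpha V\,dx$ then gives, after Young,
\begin{equation*}
\frac{d}{d\tau}\|\partial^\alpha V\|_E^2 \;\le\; -\tfrac{\lambda}{2}\|\partial^\alpha V\|^2 + C\bigl(\|[\partial^\alpha,A]V\|^2 + \|\partial^\alpha f\|^2\bigr).
\end{equation*}
By the Moser-type commutator bound in Lemma \ref{4lemma:calculus_inequ}(2), $\|[\partial^\alpha,A]V\|\le C_s\|A\|_s\|V\|_{|\alpha|-1}$, which involves only strictly lower-order norms of $V$. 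Summing over $|\alpha|\le k$ and invoking the induction hypothesis (that $\|V\|_{k-1}$ already decays exponentially, starting from the base case $k=0$ above), the right-hand side becomes an exponentially decaying forcing, and the scalar Grönwall argument again delivers exponential decay of $\|V\|_k$. Iterating $k=1,\dots,s$ closes the proof.

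The main obstacle is the commutator step: the estimate $\|[\partial^\alpha,A]V\|\le C_s\|A\|_s\|V\|_{|\alpha|-1}$ requires uniform-in-$\tau$ control of $\|A(\tau)\|_s$, which is not explicit in the hypothesis $A\in C([0,\infty),H^s)$; I would make this quantitative by observing that in the intended application $\|A(\tau)\|_s$ remains bounded on the time interval of interest (and indeed the induction only needs a local-in-$\tau$ bound multiplied against an exponentially decaying factor, so any polynomial growth of $\|A(\tau)\|_s$ is harmless). A secondary technical point is the appearance of the matrix dissipation inequality only for $\tau$ large; on the initial finite interval one uses the trivial bound $\frac{d}{d\tau}\mathcal{E}_0\le C\mathcal{E}_0+C\|f\|^2$ and continuity of the solution in $L^2$ to propagate to the regime where the dissipation kicks in.
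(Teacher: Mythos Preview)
The paper does not supply its own proof of this lemma: it is quoted from \cite{yong1999singular} and introduced with the sentence ``Their proofs can be found in \cite{yong1999singular} and references cited therein.'' So there is no in-paper argument to compare against.

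That said, your proposal is the standard route and matches what one finds in the cited reference: a weighted $L^2$ energy with weight $E(x)$, the algebraic dissipation $EA+A^TE\le -S$ converted into exponential decay via an integrating factor, and then an induction on derivative order where the commutator $[\partial^\alpha,A]V$ is controlled by lower-order norms already known to decay. Your handling of the ``for all sufficiently large $\tau$'' restriction is also correct: on the initial finite interval one just uses the crude bound $\frac{d}{d\tau}\mathcal{E}_0\le C\mathcal{E}_0+C\|f\|^2$ coming from continuity of $A$ and $E$.

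Your identification of the main technical gap is accurate: the commutator bound needs $\sup_\tau\|A(\cdot,\tau)\|_s<\infty$ (or at worst polynomial growth), which is not literally implied by $A\in C([0,\infty),H^s)$. In the applications of this lemma within the paper (equations \eqref{4eq:i02-equ} and \eqref{4eq:Ik-2-equ}), $A(x,\tau)=\partial_w q(\bar u_0(x),\mathrm{I}_0^{II}(x,\tau);0)$ converges as $\tau\to\infty$ to the $\tau$-independent matrix $\partial_w q(\bar u_0(x),0;0)$, so $\|A(\tau)\|_s$ is uniformly bounded and the issue does not arise. You should state this boundedness as an explicit (if tacit) hypothesis rather than leave it as a remark; otherwise your argument is complete.
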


\begin{lemma}\cite{yong1999singular}\label{4lemma:gronwall}
	Suppose $\psi(t)$ is a positive $C^1$--function of $t\in [0,T)$ with $T\leq \infty$, $m>1$ and $b_1(t),b_2(t)$ are integrable on $[0,T)$. If
	\begin{equation*}
		\psi'(t) \leq b_1(t)\psi^m(t) + b_2(t)\psi(t),
	\end{equation*}
	then there exists $\delta >0$, depending only on $m$, $C_{1b}$ and $C_{2b}$, such that
	\begin{equation*}
		\sup_{t\in[0,T)}\psi(t) \leq e^{C_{1b}},
	\end{equation*}
	whenever $\psi(0)\in (0,\delta]$. Here
	\begin{equation*}
		C_{1b}=\sup_{t\in [0,T)}\int_0^t b_1(s)\mathrm{d} s,\qquad C_{2b}=\int_0^T \max\{b_2(t),0\}\mathrm{d}t.
	\end{equation*}
\end{lemma}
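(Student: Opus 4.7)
The plan is to reduce the nonlinear Bernoulli inequality to a linear one via the substitution $y(t) = \psi(t)^{1-m}$, which is well-defined and positive since $\psi > 0$ and $m>1$. Direct differentiation together with the hypothesis $\psi'\leq b_1\psi^m+b_2\psi$ (dividing by $\psi^m>0$ and multiplying by $-(m-1)$, which flips the inequality) gives the linear differential inequality
\begin{equation*}
y'(t) + (m-1)b_2(t)\, y(t) \;\geq\; -(m-1)\, b_1(t).
\end{equation*}
With the integrating factor $\mu(t) = \exp\bigl((m-1)B_2(t)\bigr)$, where $B_2(t)=\int_0^t b_2(s)\,ds$, the inequality reads $(\mu y)'(t)\geq -(m-1)\mu(t)b_1(t)$, and integration from $0$ to $t$ yields
\begin{equation*}
\mu(t)\, y(t) \;\geq\; \psi(0)^{1-m} \;-\; (m-1)\int_0^t \mu(s)\, b_1(s)\,ds.
\end{equation*}

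Next I would control the integral on the right-hand side using only $m$, $C_{1b}$ and $C_{2b}$. An integration by parts, together with $\mu'(s) = (m-1)b_2(s)\mu(s)$, the bound $B_1(s) := \int_0^s b_1(\tau)\,d\tau \leq C_{1b}$, and $\mu(s) \leq e^{(m-1)C_{2b}}$ (from $B_2(s)\leq C_{2b}$), rewrites $\int_0^t \mu b_1\,ds$ as $B_1(t)\mu(t) - (m-1)\int_0^t B_1(s) b_2(s)\mu(s)\,ds$ and produces a uniform constant $K=K(m,C_{1b},C_{2b})$ majorising this integral. Hence
\begin{equation*}
y(t) \;\geq\; e^{-(m-1)B_2(t)}\bigl[\,\psi(0)^{1-m} - (m-1)K\,\bigr].
\end{equation*}
The desired conclusion $\psi(t)\leq e^{C_{1b}}$ is equivalent to $y(t)\geq e^{-(m-1)C_{1b}}$, so picking $\delta=\delta(m,C_{1b},C_{2b})$ small enough to enforce $\delta^{1-m}-(m-1)K \geq e^{(m-1)(C_{2b}-C_{1b})}$ guarantees this inequality uniformly in $t\in[0,T)$ whenever $\psi(0)\in(0,\delta]$, completing the proof.

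The main obstacle will be producing a clean bound $K$ on $\int_0^t \mu b_1\,ds$ that depends only on $m$, $C_{1b}$ and $C_{2b}$, because $b_1$ can change sign and $C_{1b}$ is a supremum of the signed primitive rather than an $L^1$ norm; and only the positive part of $b_2$ enters $C_{2b}$. If the integration-by-parts estimate genuinely requires more than $C_{2b}$, I would close the argument instead by a continuation/bootstrap: define $t^* := \sup\{t\in[0,T): \psi(\tau) \leq e^{C_{1b}}\text{ for all }\tau\leq t\}$, assume for contradiction $t^*<T$, exploit the running bound $\psi^{m-1}\leq e^{(m-1)C_{1b}}$ on $[0,t^*]$ to replace the Bernoulli term $b_1\psi^m$ by the linear expression $b_1 e^{(m-1)C_{1b}}\psi$, then integrate the resulting linear Gronwall inequality for $\ln\psi$ to contradict $\psi(t^*)=e^{C_{1b}}$ as soon as $\psi(0)\leq\delta$ is sufficiently small. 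Either route furnishes an admissible $\delta$ determined solely by $m$, $C_{1b}$ and $C_{2b}$.
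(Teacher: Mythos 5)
The paper does not prove this lemma (it is quoted from \cite{yong1999singular}), so your attempt has to stand on its own. The Bernoulli reduction is the right skeleton and your first two displays are correct: with $y=\psi^{1-m}$ and $\mu(t)=e^{(m-1)B_2(t)}$ one gets $\mu(t)y(t)\ge \psi(0)^{1-m}-(m-1)\int_0^t\mu(s)b_1(s)\,ds$, and the lemma follows once $\int_0^t\mu b_1\,ds\le K(m,C_{1b},C_{2b})$. But the obstacle you flag is fatal, not technical. Integration by parts does not close it: the boundary term $B_1(t)\mu(t)\le C_{1b}e^{(m-1)C_{2b}}$ is harmless, but $-(m-1)\int_0^tB_1(s)b_2(s)\mu(s)\,ds$ is uncontrolled, because $B_1$ has no lower bound expressible through $C_{1b}$ and the positive mass of $b_2$ may sit exactly where $B_1$ is very negative. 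In fact no such $K$ exists, and the statement as transcribed is false for sign-changing $b_1$: take $m=2$, $T=1/N$, $b_2\equiv N$ (so $C_{2b}=1$), $b_1=-M$ on $[0,\tfrac1{2N}]$ and $b_1=+M$ on $[\tfrac1{2N},\tfrac1N]$ (so $B_1\le0$ and $C_{1b}=0$). Solving the equality ODE gives $\mu(t)y(t)=\psi(0)^{-1}-\int_0^te^{Ns}b_1(s)\,ds$ with $\int_0^{1/N}e^{Ns}b_1(s)\,ds=\tfrac{M}{N}(e^{1/2}-1)^2$, so choosing $M/N$ of order $1/\delta$ (e.g.\ $\tfrac{M}{N}(e^{1/2}-1)^2=\delta^{-1}-1$) yields a positive $C^1$ solution on $[0,T)$ with $\psi(0)=\delta$ and $\sup\psi\ge e>1=e^{C_{1b}}$, for every $\delta>0$, with $(m,C_{1b},C_{2b})=(2,0,1)$ fixed. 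Your fallback bootstrap fails at the same spot: the replacement $b_1\psi^m\le b_1e^{(m-1)C_{1b}}\psi$ reverses wherever $b_1<0$, and repairing it by discarding the negative part of $b_1$ produces $\int_0^t\max\{b_1,0\}\,ds$, which is again not controlled by $C_{1b}=\sup_t\int_0^tb_1$.

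The lemma is correct only under the implicit hypothesis $b_1\ge0$ (equivalently, with $C_{1b}$ redefined as $\int_0^T\max\{b_1,0\}\,dt$), which is how it is actually used in this paper ($b_1\equiv C>0$, $b_2\ge0$). Under that reading your first route closes immediately and needs no integration by parts: $b_1\ge0$ and $\mu(s)\le e^{(m-1)C_{2b}}$ give $\int_0^t\mu b_1\,ds\le e^{(m-1)C_{2b}}\int_0^tb_1\,ds\le e^{(m-1)C_{2b}}C_{1b}=:K$, and then requiring $\delta^{1-m}\ge e^{(m-1)(C_{2b}-C_{1b})}+(m-1)K$ finishes the proof exactly as you describe; the bootstrap route also works verbatim once $b_1\ge0$. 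So: right method and a correctly diagnosed sticking point, but the sticking point is a defect of the hypotheses as stated rather than something either of your routes can overcome, and you should state and use the nonnegativity of $b_1$ explicitly.
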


\begin{notation}
	The superscript $'T'$ denotes the transpose of a vector or matrix. $|U|$ denotes some norm of a vector or matrix. $L_2=L_2(\Omega)$ is the space of square integrable (vector- or matrix-valued) functions on $\Omega$. For a non-negative integer s, $H_s=H_s(\Omega)$ is defined as the space of functions whose distribution derivatives of order $\leq s$ are all in $L_2$. We use $\parallel U\parallel_s$ to denote the standard norm of $U\in H_s$, and $\parallel U\parallel=\parallel U\parallel_0$. When $A$ is a function of another variable $t$ as well as $x$, we write $\parallel A(\cdot, t)\parallel_s$ to recall that the norm is taken with respect to $s$ while $t$ is viewed as a parameter. In addition, we denote by $C([0, T], X)$ the space of continuous functions on $[0, T]$ with values in a Banach space $X$.
\end{notation}

\subsection{Formal asymptotic expansions}\label{4-1sec}

We construct such an approximate solution for the equation \eqref{4eq:keyequ} by an asymptotic expansion with initial layer corrections of the form
\begin{equation}\label{4eq:U-expansion}
	U_\varepsilon^m = \sum_{k=0}^m \varepsilon^k\bigg(U_k(x, t)+\mathrm{I}_k(x, \tau)\bigg),\qquad  m\in \mathrm{N},
\end{equation}
where $\tau = t/\varepsilon^2$  is a fast time.  Here $\sum_{k=0}^m \varepsilon^k U_k(x, t)$ is the outer expansion and $\sum_{k=0}^m \varepsilon^k \mathrm{I}_k(x, \tau)$ is the initial-layer correction. As a correction,  $\mathrm{I}_k(x, \tau)$ will be significant only near $t=0$. Thus the $\mathrm{I}_k(x, \tau)$ are required to decay to zero as $\tau$ goes to infinity, since the latter happens as $\varepsilon$ tends to zero whenever $t \geq \delta>0$ with $\delta$ arbitrary but fixed. This natural requirement is similar to the traditional matching principle in \cite{eckhaus1977matching}. Once the outer expansion and the initial-layer correction are found, the formal asymptotic approximation is defined as the above truncation \eqref{4eq:U-expansion}.
%
We assume there exists an approximate solution $U_\varepsilon^m$ to \eqref{4eq:keyequ}--\eqref{initial-condition-key} defined on a time interval $[0, T_m]$, with $T_m>0$ independent of $\varepsilon$.

The properties of the approximate solution strongly depend on its leading profile  $(u_0, w_0)$, which is a formal limit of $U_\varepsilon^m$. From the eqautions \eqref{4eq:U-expansion} and \eqref{4eq:keyequ}, we can obtain 
\begin{equation*}
	\begin{aligned}
		\varepsilon^{-2}: &q(u_0, w_0; 0) = 0,\\
		\varepsilon^{-1}: &\begin{pmatrix}
			A^{11}(u_0, w_0;0) & A^{12}(u_0, w_0;0)\\
			A^{21}(u_0, w_0;0) & A^{22}(u_0, w_0;0)
		\end{pmatrix}\partial_x \begin{pmatrix}
			u_0\\w_0
		\end{pmatrix} = \begin{pmatrix}
			0\\
			q_w(u_0, w_0;0)w_1
		\end{pmatrix},\\
		\varepsilon^0 ~: &\partial_t u_0 + A^{11}(u_0, w_0;0)\partial_x u_1 + A^{12}(u_0, w_0;0)\partial_x w_1 \\
		&+ \bigg(A^{11}_u(u_0, w_0;0)u_1 + A^{11}_w(u_0, w_0;0)w_1 + A^{11}_\varepsilon(u_0, w_0;0)\bigg)\partial_x u_0 = 0.
	\end{aligned}
\end{equation*}
According to the Lemma \ref{4lemma1}, we have
\begin{equation*}
	\begin{aligned}
		w_0 = 0, \qquad w_1 = q_w^{-1}(u_0, 0;0)A^{21}(u_0, 0;0)\partial_x u_0,\\
		\partial_t u_0 + A^{12}(u_0, 0;0)\partial_x w_1 + A^{11}_w(u_0, 0;0)w_1\partial_x u_0  + A^{11}_\varepsilon(u_0, 0;0)\partial_x u_0 = 0.
	\end{aligned}
\end{equation*}
Here we use the properties that $A^{11}(U_{eq};0) = 0$ and $\partial_u A^{11}(U_{eq};0) = 0$ for all $U_{eq}\in G_{eq}$.
Applying the relation of $w_1$ to the equation of $u_0$, we obtain  
\begin{equation}\label{4eq:u0-equ}
	\begin{aligned}
		\partial_t u_0 & + A^{12}(U_0;0)\partial_x \bigg(q_w^{-1}(U_0;0)A^{21}(U_0;0)\partial_x u_0 \bigg)\\
		 &+ \bigg(A^{11}_w(U_0;0)q_w^{-1}(U_0;0)A^{21}(U_0;0)\partial_x u_0 + A^{11}_\varepsilon(U_0;0)\bigg)\partial_x u_0=0.
	\end{aligned}
\end{equation}
The equation \eqref{4eq:u0-equ} can be rewritten as
\begin{equation*}
	\begin{aligned}
		\partial_t u_0 &+ A^{12}(U_0;0)q_w^{-1}(U_0;0)A^{21}(U_0;0)\partial^2_{xx}u_0\\
		& + A^{12}(U_0;0)\partial_u \bigg(q_w^{-1}(U_0;0)A^{21}(U_0;0)\bigg) \partial_x u_0\\
		& + \bigg(A^{11}_w(U_0;0)q_w^{-1}(U_0;0)A^{21}(U_0;0)\partial_x u_0 + A^{11}_\varepsilon(U_0;0)\bigg)\partial_x u_0= 0.
	\end{aligned}
\end{equation*}
Since $A^{21}(U_0;0)$ is not full-rank matrix according to Appendix \ref{appendix}, we know that the equation of $u_0$ \eqref{4eq:u0-equ} is not strictly parabolic. Its proof is quite similar to those proved in \cite{lattanzio2001hyperbolic,peng2016parabolic}.

Here we derive the specific form of the equation which $u_0$ satisfies. 
Expanding the variables into a power series of $\varepsilon$ which involved in the equation \eqref{4eq:rhov-equ} yields 
\begin{equation}\label{4eq:expansion-u0}
	\begin{aligned}
		\rho &= \rho^0 + \varepsilon \rho^1 + \cdots, \quad && v = v^0 + \varepsilon v^1 + \cdots,\\
		E &= E^0 + \varepsilon E^1 + \cdots, && \theta = \theta^0 + \varepsilon \theta^1 + \cdots,\\
		p &= p^0+ \varepsilon p^1 + \cdots, && f_0 =f_0^0 + \varepsilon f_0^1 +\cdots,\\
		\alpha &= \alpha^0 + \varepsilon \alpha^1 + \cdots, && f_2 = f_2^0 + \varepsilon f_2^1+\cdots,
	\end{aligned}
\end{equation}
where $\theta = \theta(\rho, v, E)$, $p = p(\rho,\theta)$.
According to the definition of equilibrium state in $\eqref{2eq:radiation-equilibrium}$, we know that $f_0^0 = \frac{1}{2}b(\theta^0)$, $\alpha^0 = 0$, $f^0_2 = 0$.

Using the equation \eqref{4eq:rhov-equ}, we arrive at 
\begin{equation}\label{4eq:rho-0-equ}
	\begin{aligned}
			&\partial_t \rho^0 + \partial_x(\rho^0 v^0)=0,\\
			&\partial_t (\rho^0 v^0) + \partial_x\bigg(\rho^0 (v^0)^2 + p^0 +  \kappa_{2,2}(0)f_2^0 +  \kappa_{2,0}(0)f_0^0\bigg)=0,\\
			&\partial_t \bigg(\rho^0 E^0 + \kappa_{0,0}(0)f_0^0\bigg) + \partial_x \bigg(\rho^0 E^0 v^0 + p^0 v^0 + \kappa'_{1,0}(0)\alpha^1 f_0^0 + \kappa_{1,0}(0)f_0^1\bigg)=0.
	\end{aligned}
\end{equation}
Here $\kappa_{2,0}(0)=\frac{2}{3}$, $\kappa_{0,0}(0)=2$, $\kappa'_{1,0}(0)=-\frac{8}{3}$, $\kappa_{1,0}(0)=0$ due to the formulas \eqref{2lemma:kappa}. To get a closed system, we also need the expression of $\alpha^1$.

In order to obtain the expression of $\alpha^1$, we analyze equation of $E_1$ in \eqref{equ:moment-E1}:
\begin{equation*}
	\partial_{t}(\kappa_{1,0}f_0) + \frac{1}{\varepsilon} \partial_x(\kappa_{2,2}f_2+\kappa_{2,0}f_0) = -\rho\bigg(\frac{1}{\varepsilon^2}\sigma_a(\theta)+\sigma_s(\theta)\bigg)\kappa_{1,0}(\alpha)f_0.
\end{equation*}
Putting the expansion \eqref{4eq:expansion-u0} into above equation, the identification of $O(\varepsilon^{-1})$ yields
\begin{equation*}
	\begin{aligned}
		\partial_x(\kappa_{2,0}(0)f_0^0)=&-\rho^0 \sigma_a(\theta^0) \bigg(\kappa'_{1,0}(0)\alpha^1f_0^0 \bigg).
	\end{aligned}
\end{equation*}
Here we used $f_2^0=0$ and $\kappa_{1,0}(0)=0$.
Combining $\kappa_{2,0}(0)=\frac{2}{3}$ and $f_0^0 = \frac{1}{2}b(\theta^0)$, we see that 
\begin{equation*}
	\kappa'_{1,0}(0)\alpha^1f_0^0 =-\frac{1}{3\rho^0\sigma_a(\theta^0)} \partial_x(b(\theta^0)).
\end{equation*}

Omit the superscript in above equation, the non-relativistic limit equation can be obtained as
\begin{equation*}
	\begin{aligned}
			&\partial_t \rho + \partial_x(\rho v)=0,\\
			&\partial_t (\rho v) + \partial_x\bigg(\rho v^2 + p + \frac{1}{3}b(\theta) \bigg)=0,\\
			&\partial_t \big(\rho E + b(\theta) \big) + \partial_x \big(\rho E v + p v \big) = \partial_x \bigg(\frac{1}{3\rho\sigma_a(\theta)} \partial_x b(\theta) \bigg).
	\end{aligned}
\end{equation*}
In \cite{buet2004asymptotic,lowrie1999coupling,ferguson2017equilibrium}, the authors also obtain the zero-order approximation of the radiation hydrodynamics system and the system is also hyperbolic-parabolic form which is  similar to above equation. However, there is no rigorous proof of the singular limit.

Below we derive the equations satisfied by the other coefficients in the asymptotic solution \eqref{4eq:U-expansion}. To do this, we consider the residual
\begin{equation}\label{4eq:asp-err}
	R(U_\varepsilon^m) = \partial_t U_\varepsilon^m + \frac{1}{\varepsilon}A(U_\varepsilon^m;\varepsilon)\partial_x U_\varepsilon^m - \frac{1}{\varepsilon}Q(U_\varepsilon^m;\varepsilon).
\end{equation}
Using Tayler expansion, we have
\begin{equation*}
	\begin{aligned}
		A(U_\varepsilon^m;\varepsilon) = A(U_\varepsilon^m;0)+\sum_{l=1}^{+\infty}\varepsilon^l \partial_\varepsilon^l A(U_\varepsilon^m;0),\\
		Q(U_\varepsilon^m;\varepsilon) = Q(U_\varepsilon^m;0)+\sum_{l=1}^{+\infty}\varepsilon^l \partial_\varepsilon^l Q(U_\varepsilon^m;0).
	\end{aligned}
\end{equation*}
Remark that for $W = \sum_{k=0}^{+\infty}\varepsilon^k W_k$ and a sufficiently smooth function $H$, we have formally \cite{lattanzio2001hyperbolic}
\begin{equation*}
	H(W) = H(\sum_{k=0}^{+\infty}\varepsilon^k W_k) = H(W_0) + \sum_{k=1}^{+\infty}\varepsilon^k [\partial_W H(W_0)W_k + \mathrm{C}(H,k,\underline{W})],
\end{equation*}
where coefficients $\mathrm{C}(H,k,\underline{W})$ are completely determined by the given function $H$ and the first $k$ components $\underline{W}=(W_0, W_1,W_2,\cdots, W_{k-1})$. Moreover, $\mathrm{C}(H,1,\underline{W}) = 0$ and $\mathrm{C}(H,k,\underline{W})$ is linear with respect to $W_{k-1}$ for $k\geq 3$.

\subsubsection{Outer Expansions}
As a formal solution, the outer expansion $\sum_{k=0}^\infty \varepsilon^k U_k(x, t)$ asymptotically satisfies the system \eqref{4eq:keyequ}. Thus, we have
\begin{equation}\label{4eq:outer-expansion}
	\begin{aligned}
		&R(\sum_{k=0}^\infty \varepsilon^k U_k)\\
		&= -\varepsilon^{-2} Q(U_0;0) + \varepsilon^{-1}\bigg[A(U_0;0)\partial_x U_0 - \partial_\varepsilon Q(U_0;0) - Q_U(U_0;0)U_1\bigg]\\
		&~ + \sum_{k=0}^\infty \varepsilon^k \partial_t U_k + \sum_{k=0}^\infty \varepsilon^k \sum_{l=0}^{k+1} \frac{1}{l!}\partial_\varepsilon^l A(U_0;0)\partial_x U_{k+1-l}\\
		&~ + \sum_{k=0}^\infty \varepsilon^k \sum_{l=0}^{k}\sum_{j=0}^{k-l} \frac{1}{l!}\bigg[\partial_U(\partial_\varepsilon^l A(U_0;0))U_{k+1-l-j} + \mathrm{C}(\partial_\varepsilon^l A(\cdot~;0),k+1-l-j,\underline{U})\bigg]\partial_x U_{j}\\
		&~ - \sum_{k=0}^\infty \varepsilon^k \sum_{l=0}^{k+1}\frac{1}{l!}[\partial_U(\partial_\varepsilon^lQ(U_0;0))U_{k+2-l} + \mathrm{C}(\partial_\varepsilon^l Q(\cdot~;0);k+2-l,\underline{U})] \\
		&~ - \sum_{k=0}^\infty \varepsilon^k \frac{1}{(k+2)!}\partial_\varepsilon^{k+2}Q(U_0;0)
	\end{aligned}
\end{equation}
vanishes. This happens when each term of the last expansion is zero, i.e.,
\begin{equation}\label{eq:equ0}
	\begin{aligned}
		\varepsilon^{-2}: &Q(U_0;0) = 0,\\
		\varepsilon^{-1}: &A(U_0;0)\partial_x U_0 - \partial_\varepsilon Q(U_0;0) - Q_U(U_0;0)U_1=0,\\
		\varepsilon^k ~: &\partial_t U_k + \sum_{l=0}^{k+1} \frac{1}{l!}\partial_\varepsilon^lA(U_0;0)\partial_x U_{k+1-l} \\
		&+ \sum_{l=0}^{k}\sum_{j=0}^{k-l} \frac{1}{l!}\bigg[\partial_U(\partial_\varepsilon^l A(U_0;0))U_{k+1-l-j} + \mathrm{C}(\partial_\varepsilon^l A(\cdot~;0),k+1-l-j,\underline{U})\bigg]\partial_x U_{j}\\
		&= \sum_{l=0}^{k+1}\frac{1}{l!}[\partial_U(\partial_\varepsilon^lQ(U_0;0))U_{k+2-l} + \mathrm{C}(\partial_\varepsilon^l Q(\cdot~;0) ;k+2-l,\underline{U})]\\
		&+\frac{1}{(k+2)!}\partial_\varepsilon^{k+2}Q(U_0;0).
	\end{aligned}
\end{equation}
According to Lemma \eqref{4lemma1}, $u_0,w_0$ and $w_1$ satisfy
\begin{equation}\label{4eq:equ1}
	\begin{aligned}
		&Q(u_0,w_0;0)=0 \Rightarrow w_0 = 0, \qquad \partial_\varepsilon Q(u_0,0;0)=0,\\
		&w_1 = q_w^{-1}(u_0, 0;0)A^{21}(u_0, 0;0)\partial_x u_0,\\
		&\partial_t u_0 + A^{12}(u_0, 0;0)\partial_x w_1 + A^{11}_w(u_0, 0;0)w_1\partial_x u_0  + A^{11}_\varepsilon(u_0, 0;0)\partial_x u_0 = 0.
	\end{aligned}
\end{equation}
The above equations can be rewritten with the $u$--, $w$--components as
\begin{equation}\label{4eq:equ2}
	\begin{aligned}
		&\partial_t u_k + \sum_{l=0}^{k+1} \frac{1}{l!}\partial_\varepsilon^l( A^{11}(U_0;0)\partial_x u_{k+1-l}+ A^{12}(U_0;0)\partial_x w_{k+1-l})\\
		& + \sum_{l=0}^k\sum_{j=0}^{k-l}\frac{1}{l!}\bigg[\partial_u(\partial_\varepsilon^l A^{11}(U_0;0))u_{k+1-l-j}\partial_x u_{j} + \partial_w(\partial_\varepsilon^l A^{11}(U_0;0))w_{k+1-l-j}\partial_x u_{j} \\
		&+ \partial_u(\partial_\varepsilon^l A^{12}(U_0;0))u_{k+1-l-j}\partial_x w_{j} + \partial_w(\partial_\varepsilon^l A^{12}(U_0;0))w_{k+1-l-j}\partial_x w_{j} \\
		&+ \mathrm{C}(\partial_\varepsilon^l A^{11}(\cdot~;0),k+1-l-j,\underline{U})\partial_x u_{j} + \mathrm{C}(\partial_\varepsilon^l A^{12}(\cdot~;0),k+1-l-j,\underline{U})\partial_x w_{j} \bigg]=0.
	\end{aligned}
\end{equation}
and
\begin{equation}\label{4eq:equ3}
	\begin{aligned}
		&\partial_t w_k + \sum_{l=0}^{k+1} \frac{1}{l!}\partial_\varepsilon^l( A^{21}(U_0;0)\partial_x u_{k+1-l}+ A^{22}(U_0;0)\partial_x w_{k+1-l})\\
		& + \sum_{l=0}^k\sum_{j=0}^{k-l}\frac{1}{l!}\bigg[\partial_u(\partial_\varepsilon^l A^{21}(U_0;0))u_{k+1-l-j}\partial_x u_{j} + \partial_w(\partial_\varepsilon^l A^{21}(U_0;0))w_{k+1-l-j}\partial_x u_{j} \\
		&+ \partial_u(\partial_\varepsilon^l A^{22}(U_0;0))u_{k+1-l-j}\partial_x w_{j} + \partial_w(\partial_\varepsilon^l A^{22}(U_0;0))w_{k+1-l-j}\partial_x w_{j} \\
		&+ \mathrm{C}(\partial_\varepsilon^l A^{21}(\cdot~;0),k+1-l-j,\underline{U})\partial_x u_{j} + \mathrm{C}(\partial_\varepsilon^l A^{22}(\cdot~;0),k+1-l-j,\underline{U})\partial_x w_{j} \bigg]\\
		&-\sum_{l=0}^{k+1}\frac{1}{l!}[\partial_u(\partial_\varepsilon^l q(U_0;0))u_{k+2-l} + \partial_w(\partial_\varepsilon^l q(U_0;0))w_{k+2-l} + \mathrm{C}(\partial_\varepsilon^l q(\cdot~;0) ;k+2-l,\underline{U})]\\
		&-\frac{1}{(k+2)!}\partial_\varepsilon^{k+2}q(U_0;0) = 0.
	\end{aligned}
\end{equation}

Obviously, the equations in \eqref{eq:equ0} need to be rewritten to determine $U_k$ inductively. Equation \eqref{4eq:equ1} shows that $U_0$ lies on the equilibrium manifold $G_{eq}$. According to equations \eqref{4eq:equ1}, we have found the equations for $u_0$, $w_0$ and $w_1$. From the Lemma \eqref{4lemma1}, we know $A^{11}(U_0;0)=0$ and $w_0 =0$. Hence the equations of $u_k$  \eqref{4eq:equ2} may depend on $U_0,\cdots, U_k$, $w_{k+1}$ and their first-order derivatives, but are independent of $u_{k+1}$. From the equations \eqref{4eq:equ3}, we can see $w_k$ depend on $U_0, \cdots, U_{k+1}$ and $w_{k+2}$. The equations of $w_k$ are independent of $u_{k+2}$ due to the fact: since  $q(U_0;0)=0$, we know $\partial_u(\partial_\varepsilon^l q(U_0;0))u_{k+2-l}=0$ when $l=0$. Moreover, $\partial_w(\partial_\varepsilon^l q(U_0;0))w_{k+2-l}=\partial_w q(U_0;0) w_{k+2}$ when $l=0$. Therefore, \eqref{4eq:equ3} give an expression of $w_{k+2}$ as a function of $U_0,\cdots, U_{k+1}$ and of the known quantities and their derivatives. 

Up to now, we have found the equations for $u_0$,$w_0$ and $w_1$. Assume inductively that we have equations for $u_i$, $w_i$ and $w_{i+1}$ for $i=0,\cdots,k$. The equations \eqref{4eq:equ3} gives an expression of $w_{i+2}$ of function of $u_{k+1}$, $\partial_x u_{k+1}$ and of the known quantities and their derivatives. With this expression, the equation for $u_{k+1}$ can be derived from the relation \eqref{4eq:equ2}.

Assume $U_0,\cdots,U_{k-1}$ are known. From equation \eqref{4eq:equ2}, we know that the equations of $u_k$ can be rewritten as
\begin{equation*}
	\partial_t u_k + A^{12}(U_0;0)\partial_x w_{k+1} + \cdots =0.
\end{equation*}
What is omitted here and in the following equations is the derivative term of the known quantity and the known quantity $U_0,\cdots, U_{k-1}$. \eqref{4eq:equ3} allows to express $w_{k+1}$ as
\begin{equation*}
	w_{k+1} = q_w(U_0;0)^{-1}A^{21}(U_0;0)+\cdots.
\end{equation*}
Hence, the coefficient of the second derivative in the equation of $u_k$ is still $A^{12}(U_0;0)q_w(U_0;0)^{-1}A^{21}(U_0;0)$, which is the same as $u_0$, so the equation of $u_k$ is not strictly parabolic.

From previous discussions, it remains to find initial data for the coefficients $U_k$. For this purpose, we turn to consider the composite expansion.

\subsubsection{Composite Expansions}
Since $t= \varepsilon^2 \tau$, we have formally
\begin{equation*}
	\begin{aligned}
		\sum_{k=0}^\infty \varepsilon^k U_{k}(x,t) = \sum_{k=0}^\infty \varepsilon^k U_{k}(x,\varepsilon^2 \tau) = \sum_{k=0}^\infty \varepsilon^k \mathrm{P}_{k}(x,\tau),
	\end{aligned}
\end{equation*}
where
\begin{equation*}
	\mathrm{P}_{k}(x,\tau) = \sum_{h=0}^{[k/2]}\frac{\tau^h}{h!}\frac{\partial^h U_{k-2h}}{\partial t^h}(x,0)
\end{equation*}
is a polynomial of degree $[k/2]$ in $\tau$. Particularly, $\mathrm{P}_{0}(x,\tau) = U_0(x,0)$.

The composite expansion $U_\varepsilon^m$ in \eqref{4eq:U-expansion} becomes  
\begin{equation}\label{4eq:tau-expansion}
	\begin{aligned}
		\sum_{k=0}^m \varepsilon^k(U_k(x, t)+\mathrm{I}_k(x, \tau)) = \sum_{k=0}^m \varepsilon^k(\mathrm{P}_{k}(x,\tau)+\mathrm{I}_k(x, \tau)),
	\end{aligned}
\end{equation}
which is just the traditional inner expansion \cite{eckhaus1977matching}. Now write \eqref{4eq:keyequ} in variables $(x,\tau)$ as follows
\begin{equation*}
	\frac{1}{\varepsilon^2}\partial_{\tau} U + \frac{1}{\varepsilon}A(U;\varepsilon)\partial_x U = \frac{1}{\varepsilon^2}Q(U;\varepsilon).
\end{equation*}
The corrected formal solution should asymptotically satisfy the equations \eqref{4eq:keyequ}. Namely, the formal asymptotic expansion
\begin{equation}\label{4eq:r-inner-expansion}
	\begin{aligned}
		&R(\sum_{k=0}^\infty \varepsilon^k (\mathrm{P}_{k}(x,\tau)+\mathrm{I}_k(x, \tau)))\\
		&= -\varepsilon^{-2} Q(\mathrm{P}_0 + \mathrm{I}_0;0) + \varepsilon^{-1}\bigg[A(\mathrm{P}_0 + \mathrm{I}_0;0)\partial_x (\mathrm{P}_0 + \mathrm{I}_0) - \partial_\varepsilon Q(\mathrm{P}_0 + \mathrm{I}_0;0)\\
		&~ - Q_U(\mathrm{P}_0 + \mathrm{I}_0;0)(\mathrm{P}_1 + \mathrm{I}_1)\bigg]\\
		&~ + \sum_{k=0}^\infty \varepsilon^k \partial_t (\mathrm{P}_{k+2} + \mathrm{I}_{k+2})  - \sum_{k=0}^\infty \varepsilon^k \frac{1}{(k+2)!}\partial_\varepsilon^{k+2}Q(\mathrm{P}_0 + \mathrm{I}_0;0)\\
		&~ + \sum_{k=0}^\infty \varepsilon^k \sum_{l=0}^{k+1} \frac{1}{l!}\partial_\varepsilon^lA(\mathrm{P}_0 + \mathrm{I}_0;0)\partial_x (\mathrm{P}_{k+1-l} + \mathrm{I}_{k+1-l})\\
		&~ + \sum_{k=0}^\infty \varepsilon^k \sum_{l=0}^{k}\sum_{j=0}^{k-l} \frac{1}{l!}\bigg[\partial_U(\partial_\varepsilon^l A(\mathrm{P}_0 + \mathrm{I}_0;0))(\mathrm{P}_{k+1-l-j} + \mathrm{I}_{k+1-l-j})\\
		&~ + \mathrm{C}(\partial_\varepsilon^l A(\cdot~;0),k+1-l-j,\underline{\mathrm{I}+\mathrm{P}})\bigg]\partial_x (\mathrm{P}_{j} + \mathrm{I}_{j})\\
		&~ - \sum_{k=0}^\infty \varepsilon^k \sum_{l=0}^{k+1}\frac{1}{l!}[\partial_U(\partial_\varepsilon^lQ(\mathrm{P}_0 + \mathrm{I}_0;0))(\mathrm{P}_{k+2-l} + \mathrm{I}_{k+2-l})\\
		&~ + \mathrm{C}(\partial_\varepsilon^l Q(\cdot~;0);k+2-l,\underline{\mathrm{I}+\mathrm{P}})] \\
	\end{aligned}
\end{equation}
vanishes. This happens when each term of the last expansion is zero, i.e.,
\begin{equation}\label{4eq:inner-expansion}
	\begin{aligned}
		\partial_{\tau}(\mathrm{P}_0 + \mathrm{I}_0) =& Q(\mathrm{P}_0 + \mathrm{I}_0;0),\\
		\partial_{\tau}(\mathrm{P}_1 + \mathrm{I}_1) =& - A(\mathrm{P}_0 + \mathrm{I}_0;0)\partial_{x}(\mathrm{P}_0 + \mathrm{I}_0) + \partial_U Q(\mathrm{P}_0 + \mathrm{I}_0;0)(\mathrm{P}_1 + \mathrm{I}_1)\\
		& + \partial_\varepsilon Q(\mathrm{P}_0 + \mathrm{I}_0;0)(\mathrm{P}_0 + \mathrm{I}_0), \\
		\partial_{\tau}(\mathrm{P}_k + \mathrm{I}_k) =& \partial_U Q(\mathrm{P}_0 + \mathrm{I}_0;0)(\mathrm{P}_k + \mathrm{I}_k) + \mathrm{F}(k, \underline{\mathrm{I}+\mathrm{P}}),\qquad k\geq 2,
	\end{aligned}
\end{equation}
where
\begin{equation}\label{4eq:f_i_p-equ}
	\begin{aligned}
		&\mathrm{F}(k, \underline{\mathrm{I}+\mathrm{P}})\\
		=&  \frac{1}{k!}\partial_\varepsilon^{k}Q(\mathrm{P}_0 + \mathrm{I}_0;0) - \sum_{l=0}^{k-1} \frac{1}{l!}\partial_\varepsilon^lA(\mathrm{P}_0 + \mathrm{I}_0;0)\partial_x (\mathrm{P}_{k-1-l} + \mathrm{I}_{k-1-l})\\
		& -  \sum_{l=0}^{k-2}\sum_{j=0}^{k-2-l} \frac{1}{l!}\bigg[\partial_U(\partial_\varepsilon^l A(\mathrm{P}_0 + \mathrm{I}_0;0))(\mathrm{P}_{k-1-l-j} + \mathrm{I}_{k-1-l-j})\\
		& + \mathrm{C}(\partial_\varepsilon^l A(\cdot~;0),k-1-l-j,\underline{\mathrm{I}+\mathrm{P}})\bigg]\partial_x (\mathrm{P}_{j} + \mathrm{I}_{j})\\
		& + \sum_{l=1}^{k-1}\frac{1}{l!}[\partial_U(\partial_\varepsilon^lQ(\mathrm{P}_0 + \mathrm{I}_0;0))(\mathrm{P}_{k-l} + \mathrm{I}_{k-l}) + \mathrm{C}(\partial_\varepsilon^l Q(\cdot~;0);k-l,\underline{\mathrm{I}+\mathrm{P}})].	
	\end{aligned}
\end{equation}
Here $\mathrm{F}(k, \underline{\mathrm{I}+\mathrm{P}})$  depend only on the first $k$ terms of the inner expansion, which is $U_0$, $\mathrm{I}_0$, $\cdots$,$U_{k-1}$ and $\mathrm{I}_{k-1}$.

According to the definition of $\mathrm{P}_k$, $\sum_{k=0}^\infty \varepsilon^k \mathrm{P}_{k}(x,\tau)$ is also a solution of \eqref{4eq:keyequ}. Hence, we obtain as above
\begin{equation}\label{4eq:p-outer-expansion}
	\begin{aligned}
		&\partial_{\tau}\mathrm{P}_0 = Q(\mathrm{P}_0;0),\\
		&\partial_{\tau}\mathrm{P}_1 =- A(\mathrm{P}_0 ;0)\partial_{x}\mathrm{P}_0 + \partial_U Q(\mathrm{P}_0;0)\mathrm{P}_1+ \partial_\varepsilon Q(\mathrm{P}_0;0)\mathrm{P}_0 ,\\
		&\partial_{\tau}\mathrm{P}_k= \partial_U Q(\mathrm{P}_0;0)\mathrm{P}_k + \mathrm{F}(k, \underline{\mathrm{I}}).
	\end{aligned}
\end{equation}
Note that 
\begin{equation*}
	\mathrm{P}_0(x, \tau) = U_0(x;0),\qquad Q(\mathrm{P}_0;0) = 0.
\end{equation*}
We find from \eqref{4eq:inner-expansion} and \eqref{4eq:p-outer-expansion} that
\begin{equation}\label{4eq:I-equ}
	\begin{aligned}
		&\partial_{\tau}\mathrm{I}_0 = Q(\mathrm{P}_0+\mathrm{I}_0;0),\\
		&\partial_{\tau}\mathrm{I}_k= \partial_U Q(\mathrm{P}_0+\mathrm{I}_0;0)\mathrm{I}_k + [\partial_U Q(\mathrm{P}_0+\mathrm{I}_0;0)-\partial_U Q(\mathrm{P}_0;0)]\mathrm{P}_k + \mathrm{G}_k,
	\end{aligned}
\end{equation}
where
\begin{equation*}
	\mathrm{G}_k = \mathrm{F}(k, \underline{\mathrm{P}+\mathrm{I}}) - \mathrm{F}(k, \underline{\mathrm{P}})
\end{equation*}
with
\begin{equation*}
	\mathrm{F}(1, \underline{\mathrm{P}+\mathrm{I}}) = \partial_\varepsilon Q(\mathrm{P}_0 + \mathrm{I}_0;0)(\mathrm{P}_0 + \mathrm{I}_0) - A(\mathrm{P}_0 + \mathrm{I}_0;0)\partial_{x}(\mathrm{P}_0 + \mathrm{I}_0).  
\end{equation*}
According to the expression of $\mathrm{F}(k, \underline{\mathrm{I}+\mathrm{P}})$      \eqref{4eq:f_i_p-equ}, $\mathrm{G}_k$ depend only on $U_0$, $\mathrm{I}_0$, $\cdots$, $U_{k-1}$, $\mathrm{I}_{k-1}$.

\subsubsection{Initial Data for the Outer Expansion}
Now we determine the initial conditions for $U_k$. Assuming $\bar{U}(x;\varepsilon)$ has a formal asymptotic expansion as follows 
\begin{equation*}
	\bar{U}(x;\varepsilon) = \sum_{k=0}^\infty\varepsilon^k \bar{U}_k(x),\qquad \bar{U}_k(x) = (\bar{u}_k(x), \bar{w}_k(x))^T.
\end{equation*}
If the composite expansion \eqref{4eq:U-expansion} is a solution of \eqref{4eq:keyequ} and \eqref{initial-condition-key}, we should have
\begin{equation*}
	U_{k}(x,0)+\mathrm{I}(x,0) = \bar{U}_k(x),
\end{equation*}
or equivalently
\begin{equation}\label{4eq:initial-value}
	\begin{aligned}
		u_k(x,0)+\mathrm{I}_k^{I}(x,0)&=\bar{u}_k(x),\\
		w_k(x,0)+\mathrm{I}_k^{II}(x,0)&=\bar{w}_k(x).
	\end{aligned}
\end{equation}
From $Q^{I} = 0$ and the first equation of \eqref{4eq:I-equ}, we have $\partial_\tau \mathrm{I}_0^{I} = 0$. 
Meanwhile, since $\mathrm{I}_0$ satisfies $\mathrm{I}_0(x,+\infty)=0$, we know $\mathrm{I}_0^{I}(x,\tau)=0$, which means that there is no zero-th order initial layer for $u$. Together with $w_0=0$, we obtain
\begin{equation*}
	u_0(x,0) = \bar{u}_0(x),\qquad \mathrm{I}_0^{II}(x,0)=\bar{w}_0(x).
\end{equation*}
According to \eqref{4eq:I-equ}, $\mathrm{I}_0^{II}$ satisfies
\begin{equation}\label{4eq:i02-equ}
	\begin{aligned}
		\partial_\tau \mathrm{I}_0^{II}&=q(\bar{u}_0(x),\mathrm{I}_0^{II};0)\\
		\mathrm{I}_0^{II}(x, 0)&=\bar{w}_0(x).
	\end{aligned}
\end{equation}
Here and below the superscript 'I'(or 'II') stands for the first 3 (or last n+1) components of a vector in $\mathcal{R}^{n+4}$.

\begin{lemma}\label{4lemma:I0limit}
	Let $\bar{w}_0$ be sufficiently small. Then there exists a unique global smooth solution $\mathrm{I}_0$ satisfying
	\begin{equation}\label{4eq:I02}
		\parallel \mathrm{I}_0 \parallel_{s+m} \rightarrow 0, \qquad \mbox{exponentially as}~ \tau \rightarrow +\infty . 
	\end{equation}
\end{lemma}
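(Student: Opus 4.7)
The plan is to treat the initial-layer problem \eqref{4eq:i02-equ} as an ODE in $\tau$ parametrized by $x$, and to exploit the structural stability condition verified in Theorem \ref{theorem-stability} to extract dissipativity of the linearization at the equilibrium $w=0$. Since the first component of $\mathrm{I}_0$ vanishes identically, it suffices to study $\mathrm{I}_0^{II}$. By Lemma \ref{4lemma1}(i), $q(\bar{u}_0(x), 0; 0) = 0$, so $w=0$ is a stationary solution, and the right-hand side is smooth with $\partial_w q(\bar{u}_0(x), 0; 0)$ invertible. Local-in-$\tau$ existence of a smooth solution $\mathrm{I}_0^{II}(x,\tau)$ on some interval $[0,\tau_*)$ then follows by standard Picard iteration in $H^{s+m}$, using the calculus inequalities of Lemma \ref{4lemma:calculus_inequ} to control compositions.

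The key quantitative step is an energy estimate in the metric defined by the symmetrizer $A_0^{22}$ of the $w$-block. The structural stability condition, item (iii), applied at equilibrium gives
\begin{equation*}
A_0^{22}(\bar{u}_0, 0)\,\partial_w q(\bar{u}_0, 0; 0) + \partial_w q(\bar{u}_0, 0; 0)^{T} A_0^{22}(\bar{u}_0, 0) \leq -c_0\, I
\end{equation*}
for some $c_0>0$ depending only on the constant $a$ from \eqref{2eq:radiation-P} and bounds on $\bar{u}_0$. Since $q$ is smooth in $w$ and $A_0^{22}$ is continuous, this dissipative bound persists uniformly in $x$ on a neighbourhood $\{|\mathrm{I}_0^{II}|\leq \delta_0\}$ after absorbing the nonlinear remainder into the negative-definite principal part. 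Testing $\partial_\tau \mathrm{I}_0^{II}=q(\bar{u}_0,\mathrm{I}_0^{II};0)$ against $A_0^{22}\,\mathrm{I}_0^{II}$ in $L^2_x$ therefore yields
\begin{equation*}
\tfrac{d}{d\tau} \bigl\langle A_0^{22}\mathrm{I}_0^{II}, \mathrm{I}_0^{II}\bigr\rangle \leq -\tfrac{c_0}{2} \|\mathrm{I}_0^{II}\|^2 + C\|\mathrm{I}_0^{II}\|^3
\end{equation*}
as long as the solution stays in the dissipative neighbourhood.

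To promote this to $H^{s+m}$, I would differentiate the ODE in $x$ up to order $s+m$, apply the same symmetrizer to each differentiated equation, and use the commutator and Moser-type estimates in Lemma \ref{4lemma:calculus_inequ} to handle the terms $\partial_x^\alpha q - q_w\,\partial_x^\alpha \mathrm{I}_0^{II}$. Summing the estimates gives a differential inequality of the form $\psi' \leq -\tfrac{c_0}{2}\psi + C\psi^{3/2}$ for $\psi(\tau)=\|\mathrm{I}_0^{II}(\cdot,\tau)\|_{s+m}^2$; Lemma \ref{4lemma:gronwall} then yields a uniform-in-$\tau$ bound provided $\|\bar{w}_0\|_{s+m}$ is small enough, which in turn allows continuation to all $\tau\geq 0$ and keeps the solution inside the dissipative neighbourhood.

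Once global existence with smallness is secured, the exponential decay \eqref{4eq:I02} follows by applying the decay Lemma \ref{4lemma:decay} to each $x$-differentiated equation. Indeed the linear coefficient $A(x,\tau) := \int_0^1 \partial_w q(\bar{u}_0(x), s\,\mathrm{I}_0^{II}(x,\tau); 0)\,ds$ is smooth in $\tau$ with values in $H^{s+m}$, and the symmetrizer $E(x):=A_0^{22}(\bar{u}_0(x),0)$ yields the uniform coercive bound $EA+A^T E \leq -\tfrac{c_0}{2}I$ once $\|\mathrm{I}_0^{II}\|_{s+m}$ is small. I expect the main technical obstacle to be bookkeeping the nonlinear remainders so that the smallness threshold $\delta$ supplied by Lemma \ref{4lemma:gronwall} is compatible with the dissipativity neighbourhood, and to verify that the $H^{s+m}$-smallness propagates uniformly in $\tau$ rather than merely for the $L^2$ norm; this is the point where the precise structure $\partial_u q(U_{eq};0)=0$ from Lemma \ref{4lemma1}(i) is crucial, since it prevents $u$-derivatives of $\bar{u}_0$ from forcing the layer equation at leading order.
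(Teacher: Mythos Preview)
Your proposal is correct and follows the paper's approach closely: both extract the dissipativity bound $A_0^{22}q_w + q_w^T A_0^{22} \leq -cI$ from the structural stability condition, then differentiate the layer ODE in $x$ to obtain linear equations $\partial_\tau Y = q_w(\bar u_0,\mathrm{I}_0^{II};0)\,Y + g_\alpha$ with exponentially decaying inhomogeneity, and conclude via Lemma~\ref{4lemma:decay}. The paper is terser---it cites classical ODE stability theory for global existence and local asymptotic stability of $w=0$---whereas you spell out an explicit $H^{s+m}$ energy argument, but the substance is identical.

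One small technical point: Lemma~\ref{4lemma:gronwall} as stated requires $b_1$ integrable on $[0,T)$, which fails for $b_1\equiv C$ on $[0,\infty)$, so it does not directly deliver the uniform-in-$\tau$ bound. This is harmless, since your inequality $\psi' \leq -\tfrac{c_0}{2}\psi + C\psi^{3/2}$ already gives global smallness and exponential decay by the elementary barrier argument: if $\psi(0) < (c_0/(2C))^2$ then $\psi'<0$ throughout, $\psi$ stays below the threshold, and in fact $\psi' \leq -\tfrac{c_0}{4}\psi$.
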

\begin{proof}
	By Lemma \ref{4lemma1}, the system \eqref{4eq:keyequ} satisfies the structural stability condition and $P$ is a scalar matrix. 
	Then $q_w$ satisfies
	\begin{equation*}
	A_0^{22}(u,0;0)q_w(u,0;0) + q_w(u,0;0)^T A_0^{22}(u,0;0)\leq -I.
	\end{equation*}
	Therefore, for sufficiently small data $\bar{w}_0$, there is a unique global solution $\mathrm{I}_0^{II}(x,\tau)$ (see \cite{arnold1974equations}). Thanks to Lemma \ref{4lemma1}, $0\in \mathcal{R}^{n+1}$ is a fixed point for \eqref{4eq:I02}. Moreover, $q_w(u,0;0)$ is stable due to above equation. Hence $0\in \mathcal{R}^{n+1}$ is locally asymptotically stable for \eqref{4eq:I02}.
	By induction, for all $\alpha$ with $\alpha \leq s+m$, $\partial_x^\alpha \mathrm{I}_0^{II}(x,\tau)$ satisfies a linear ordinary differential equation of the form
	\begin{equation*}
		\partial_t Y = \partial_w q(\bar{u}_0(x),\mathrm{I}_0^{II};0)Y + g_\alpha(x,\tau).
	\end{equation*}
	Meanwhile, $g_\alpha(x,\tau)$ decays to zero as $\tau \rightarrow +\infty$. Thanks to Lemma \ref{4lemma:decay}, we see the exponential decay of $\parallel \mathrm{I}_0 \parallel_{s+m} \rightarrow 0$. 
\end{proof}

Assume that, for $k\geq 1$ and for any $i\leq k-1$, $\mathrm{I}_i$ exists globally in time and $\parallel \mathrm{I}_i(\cdot,\tau)\parallel_{s+m-i}$ decays exponentially fast to zero as $\tau\rightarrow +\infty$. Then so does $\parallel \mathrm{G}_k\parallel_{s+m-k}$ since $\mathrm{G}_k=\mathrm{F}(k, \underline{\mathrm{P}+\mathrm{I}}) - \mathrm{F}(k, \underline{\mathrm{P}})$ is a function of $\mathrm{I}_i,\mathrm{P}_i$$(0\leq i\leq k-1)$ and their first-order derivatives with respect to $x$. Because the $u$--component of $Q$ is 0 for $k\geq 1$, the first $3$ equations in \eqref{4eq:I-equ} are
\begin{equation}\label{4eq:I1-equ}
	\begin{aligned}
		\partial_\tau \mathrm{I}_k^{I} = \mathrm{G}^{I}_k,
	\end{aligned}
\end{equation}
Hence, 
\begin{equation*}
	\mathrm{I}_k^{I}(x,\tau) = \mathrm{I}_k^{I}(x,0) + \int_0^\tau \mathrm{G}^{I}_k(x, \tau')\mathrm{d} \tau',
\end{equation*}
which admits a limit $0$ as $\tau$ goes to infinity. Therefore
\begin{equation*}
	\mathrm{I}_k^{I}(x,\tau) = -\int_\tau^{+\infty} \mathrm{G}^{I}(k, x,\tau')\mathrm{d} \tau'.
\end{equation*}
and 
\begin{equation*}
	\mathrm{I}_k^{I}(x,\tau) = -\int_\tau^{+\infty} \mathrm{G}^{I}(k, x,\tau')\mathrm{d} \tau', \qquad \text{exponentially as}~ \tau \rightarrow +\infty.
\end{equation*}
In particular,
\begin{equation}\label{4eq:I1-initial}
	\mathrm{I}_k^{I}(x,0) = -\int_0^{+\infty} \mathrm{G}^{I}_k(x,\tau')\mathrm{d} \tau',
\end{equation}
Together with \eqref{4eq:initial-value} it determines the initial value of $u_k$:
\begin{equation}\label{4eq:uk-initial}
	u_k(x,0)= \bar{u}(x) + \int_\tau^{+\infty} \mathrm{G}^{I}(k, x,\tau')\mathrm{d} \tau'.
\end{equation}

Furthermore, we can rewrite the remaining equation in \eqref{4eq:I-equ} as
\begin{equation}\label{4eq:Ik-2-equ}
	\begin{aligned}
			\partial_\tau \mathrm{I}_k^{II} =& \partial_w q(\mathrm{P}_0+\mathrm{I}_0;0)\mathrm{I}^{II}_k + \partial_u q(\mathrm{P}_0+\mathrm{I}_0;0)\mathrm{I}^{I}_k + [\partial_u q(\mathrm{P}_0+\mathrm{I}_0;0)-\partial_u q(\mathrm{P}_0;0)]\mathrm{P}^{I}_k\\
			& + [\partial_w q(\mathrm{P}_0+\mathrm{I}_0;0)-\partial_w q(\mathrm{P}_0;0)]\mathrm{P}^{II}_k  + \mathrm{G}^{II}(k) \\
			\equiv & \partial_w q(\mathrm{P}_0+\mathrm{I}_0;0)\mathrm{I}^{II}_k + \mathrm{G}'.
	\end{aligned}
\end{equation}
We know that $\parallel \mathrm{G}'\parallel_{s+m-k} $ decays exponentially fast to zero as $\tau \rightarrow +\infty $ from the definition of $\mathrm{G}$ and Lemma \ref{4lemma:I0limit}. Thanks to Lemma \ref{4lemma:decay}, we see the exponential decay of $\parallel \mathrm{I}_k^{II}(x,\tau) \parallel_{s+m-k}\rightarrow 0$. Hence, the inductive process is complete.

Now we describe a procedure to determine the coefficients of the expansion \eqref{4eq:U-expansion} using equations \eqref{4eq:outer-expansion} and \eqref{4eq:inner-expansion}. 
Based on previous analysis, $\mathrm{I}_0, U_0$ and $w_1$ are known. Then we can solve \eqref{4eq:Ik-2-equ} with the initial value providing in \eqref{4eq:initial-value} to obtained $\mathrm{I}_1^{II}$. 
The value of $\mathrm{I}^{I}_1$ can be determined by the equation \eqref{4eq:I1-equ} and initial value \eqref{4eq:I1-initial}. Hence, we can determine $U_0, U_1, \mathrm{I}_0, \mathrm{I}_1$ since the equation and initial value of $u_1$ are known. Assume inductively that $U_i, \mathrm{I}_i,  w_{i+1}$ with $i\leq k$ have been obtained. Then we can solve \eqref{4eq:Ik-2-equ} with the initial value providing in \eqref{4eq:initial-value} to obtained $\mathrm{I}^{II}_{k+1}$. And the equation \eqref{4eq:I1-equ} and the initial value \eqref{4eq:I1-initial} give the value of $\mathrm{I}^{I}_{k+1}$. Thus, $\mathrm{I}_{k+1}$ are completely determined. Moreover, \eqref{4eq:equ3} gives an expression of  $w_{i+2}$ as a function of $u_{k+1}$, $\partial_x u_{k+1}$ and of the known quantities and their derivatives. With this expression, the equation for $u_{k+1}$ can be derived from \eqref{4eq:equ2} together the initial value \eqref{4eq:uk-initial}. 

Therefore, we obtain  $U_{k+1}, \mathrm{I}_{k+1}$ and $w_{k+2}$. Hence, the inductive process is complete. 
In conclusion, we have determined all coefficients in expansions \eqref{4eq:U-expansion} and $\parallel \mathrm{I}_k \parallel_{s+m-k}$ decays exponentially to zero as $\tau\rightarrow +\infty$.

\subsubsection{Residual estimation}
The next lemma is concerning the residual of the formal approximation $R(U_\varepsilon^m)$.
\begin{theorem}\label{4theorem:asp-err}
	Let $R(U_\varepsilon^m)$ be defined by \eqref{4eq:asp-err}. Then
	\begin{equation*}
		R(U_\varepsilon^m) = \varepsilon^{m-1}Q_U(U_0;0)U_{m+1}+\varepsilon^{m-1}F_m,
	\end{equation*}
	where $Q_U(U_0;0)U_{m+1}$ is completely determined by the first $m$ terms of the outer expansion. And $F_m$ satisfies
	\begin{equation}\label{4eq:fm-err}
		\parallel F_m\parallel_s \leq C\varepsilon + Ce^{-\mu \tau},
	\end{equation}
	with $\mu\geq 0$ and $C$ constants independent of $\varepsilon$ .
\end{theorem}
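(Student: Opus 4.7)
The plan is to substitute the truncated composite expansion \eqref{4eq:U-expansion} directly into the residual $R(U_\varepsilon^m)$ and show that, by construction, all terms of order $\varepsilon^j$ with $-2\le j\le m-2$ cancel, leaving exactly the singular contribution $\varepsilon^{m-1}Q_U(U_0;0)U_{m+1}$ plus a remainder that I can bound by $C\varepsilon+Ce^{-\mu\tau}$. The proof is, in spirit, the natural analogue of the calculation in \eqref{4eq:outer-expansion}--\eqref{4eq:r-inner-expansion} but read backwards and truncated at finite order.

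First, I would expand $A(U_\varepsilon^m;\varepsilon)$ and $Q(U_\varepsilon^m;\varepsilon)$ by a double Taylor series: in the state variable around the leading composite profile $U_0+\mathrm{I}_0$, and in the parameter $\varepsilon$. Using the fast-time relation $\partial_t=\partial_t+\varepsilon^{-2}\partial_\tau$ applied to the inner corrections $\mathrm{I}_k(x,\tau)$, the residual splits into an outer part that only sees $U_k(x,t)$, an inner part that collects all contributions depending on $\mathrm{I}_k(x,\tau)$, and cross Taylor remainders. Grouping by powers of $\varepsilon$, the coefficient of $\varepsilon^j$ for $-2\le j\le m-2$ is the sum of the outer identity at order $j$ (equations \eqref{eq:equ0}) and the corresponding inner identity at order $j$ (equations \eqref{4eq:inner-expansion}), each of which was arranged to vanish during the construction in Subsection \ref{4-1sec}. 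At the next order, $\varepsilon^{m-1}$, the only outer contribution that would have been cancelled by extending the expansion one step further is precisely $Q_U(U_0;0)U_{m+1}$; since $U_{m+1}$ is not included in $U_\varepsilon^m$, this term is left over as the advertised leading piece.

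Everything that is not captured by $\varepsilon^{m-1}Q_U(U_0;0)U_{m+1}$ I would package into $\varepsilon^{m-1}F_m$ and estimate $F_m$ in two pieces. The first piece consists of Taylor remainders of $A$ and $Q$ in $\varepsilon$ and in $U$ that start at order $\varepsilon^m$ or higher; using the smoothness of $A$, $Q$ on $G\times[0,\varepsilon_0]$, the uniform $H^s$ bounds of $U_k$ and $\mathrm{I}_k$ established in Subsection \ref{4-1sec}, and the Sobolev product/composition inequalities of Lemma \ref{4lemma:calculus_inequ}, this piece is controlled by $C\varepsilon$ in $H^s$. The second piece consists of terms involving the initial-layer corrections $\mathrm{I}_k$ (for $1\le k\le m$) together with the Jacobian shift $Q_U(U_0+\mathrm{I}_0;0)-Q_U(U_0;0)$ and similar differences for $A$, all of which are of the form polynomial-in-$\tau$ times $\|\mathrm{I}_k\|_{s+m-k}$. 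By the exponential decay of $\|\mathrm{I}_k\|_{s+m-k}$ as $\tau\to+\infty$ proved in Lemma \ref{4lemma:I0limit} and its inductive extension via equation \eqref{4eq:I-equ} and Lemma \ref{4lemma:decay}, each such term is dominated by $Ce^{-\mu\tau}$ for some $\mu>0$ independent of $\varepsilon$. Adding the two bounds yields \eqref{4eq:fm-err}.

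The main obstacle will be the bookkeeping in the first step: carefully tracking which cross terms arise from expanding $A$ and $Q$ at the composite $U_0+\mathrm{I}_0$ rather than at $U_0$, and verifying that, once grouped by powers of $\varepsilon$, they either coincide exactly with the inner equations \eqref{4eq:I-equ} (so that they cancel) or reduce to differences of the form $Q_U(U_0+\mathrm{I}_0;0)-Q_U(U_0;0)$ (so that they inherit the exponential decay from $\mathrm{I}_0$). A clean way to manage this is to introduce the splitting $F_m=F_m^{\mathrm{out}}+F_m^{\mathrm{lay}}$, where $F_m^{\mathrm{out}}$ is the residual of the outer truncation $\sum_{k=0}^m\varepsilon^k U_k$ (estimated purely by $C\varepsilon$) and $F_m^{\mathrm{lay}}=R(U_\varepsilon^m)-R(\sum_{k=0}^m\varepsilon^k U_k)$ (estimated by $Ce^{-\mu\tau}$ from the decay of the $\mathrm{I}_k$), so that the two mechanisms can be isolated and the remainder estimate becomes a routine application of Lemma \ref{4lemma:calculus_inequ}.
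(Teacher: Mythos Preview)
Your proposal is correct and, in fact, your final paragraph lands on exactly the decomposition the paper uses: it first computes $R\bigl(\sum_{k=0}^m\varepsilon^k U_k\bigr)=\varepsilon^{m-1}Q_U(U_0;0)U_{m+1}+O(\varepsilon^m)$ from the outer relations \eqref{eq:equ0}, defines $F_m$ so that $\varepsilon^{m-1}F_m=R(U_\varepsilon^m)-\varepsilon^{m-1}Q_U(U_0;0)U_{m+1}$, and then estimates $F_m$ as the difference $\varepsilon^{-(m-1)}\bigl[R(U_\varepsilon^m)-R(\sum_k\varepsilon^k U_k)\bigr]+O(\varepsilon)$, which is your $F_m^{\mathrm{lay}}+F_m^{\mathrm{out}}$. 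The only technical device the paper adds that you gloss over is how to control the polynomial-in-$\tau$ growth uniformly in $\varepsilon$: it rewrites $\sum_k\varepsilon^k U_k(x,t)=\sum_k\varepsilon^k\mathrm{P}_k(x,\tau)+\varepsilon^{m+1}\tilde{\mathrm{P}}$ with $\tilde{\mathrm{P}}=O(\tau^{1+[m/2]})$, so that both $R(U_\varepsilon^m)$ and $R(\sum_k\varepsilon^k U_k)$ can be expressed via the same inner-expansion functional $C(\varepsilon,\tilde{\mathrm{P}};\cdot)$, and their difference becomes a Fr\'echet derivative applied to the layer corrections $\mathrm{I}$, inheriting the exponential decay directly. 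This is precisely the mechanism that converts your ``polynomial-in-$\tau$ times $\|\mathrm{I}_k\|$'' heuristic into a clean bound.
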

\begin{proof}
	The proof of this theorem mainly refers to the literature \cite{yong1999singular} and \cite{lattanzio2001hyperbolic}. From the relation in \eqref{4eq:outer-expansion}, we have
	\begin{equation*}
		R(\sum_{k=0}^m \varepsilon^k U_k)= \varepsilon^{m-1}Q_U(U_0;0)U_{m+1} + O(\varepsilon^m),
	\end{equation*}
	where
	\begin{equation*}
		\begin{aligned}
			&Q_U(U_0;0)U_{m+1}\\
			=& \partial_t U_{m-1} +  \sum_{l=0}^{m-1} \frac{1}{l!}\partial_\varepsilon^lA(U_0;0)\partial_x U_{m-l} \\
			&+ \sum_{l=0}^{m}\sum_{j=0}^{m-1-l} \frac{1}{l!}\bigg[\partial_U(\partial_\varepsilon^l A(U_0;0))U_{m-l-j} + \mathrm{C}(\partial_\varepsilon^l A(\cdot~;0),m-l-j,\underline{U})\bigg]\partial_x U_{j}\\
			&- \sum_{l=1}^{m}\frac{1}{l!}[\partial_U(\partial_\varepsilon^lQ(U_0;0))U_{m+1-l} +  \mathrm{C}(\partial_\varepsilon^l Q(\cdot~;0) ;m+1-l,\underline{U})]\\
			&- \frac{1}{(m+1)!}\partial_\varepsilon^{m+1}Q(U_0;0).
		\end{aligned}
	\end{equation*}
	Then $Q_U(U_0;0)U_{m+1}$ depend only on $U_0,\cdots, U_{m}$. Define $F_m$ as
	\begin{equation*}
		\varepsilon^{m-1} F_m = R(U_\varepsilon^m) - \varepsilon^{m-1}Q_U(U_0;0)U_{m+1}.
	\end{equation*}
	With this definition, we only need to prove \eqref{4eq:fm-err}.
	
	To this end, consider the Taylor expansion with respect to $\varepsilon$ at $\varepsilon=0$:
	\begin{equation*}
		\sum_{k=0}^m \varepsilon^k U_k(x,t) = \sum_{k=0}^m \varepsilon^k U_k(x,\varepsilon^2\tau)=\sum_{k=0}^m \varepsilon^k \mathrm{P}_k(x,\tau) + \varepsilon^{m+1} \tilde{\mathrm{P}}(x,t,\tau,\varepsilon),
	\end{equation*}
	where $\tilde{\mathrm{P}}(x,t,\tau,\varepsilon) = O(1)\tau^{1+[m/2]}$. Thus, we can write
	\begin{equation*}
		\begin{aligned}
			U_\varepsilon^m &= \sum_{k=0}^m \varepsilon^k(U_k(x, t) + \mathrm{I}(x,\tau)) \\
			&= \sum_{k=0}^m \varepsilon^k (\mathrm{P}_k(x,\tau) + \mathrm{I}_k(x,\tau)) + \varepsilon^{m+1} \tilde{\mathrm{P}}(x,t,\tau,\varepsilon),
		\end{aligned}
	\end{equation*}
	In the spirit of the relation \eqref{4eq:r-inner-expansion} for the inner expansion, we deduce from the definition of $R(U_\varepsilon^m)$ that
	\begin{equation*}
		\begin{aligned}
			R(U_\varepsilon^m) &= \varepsilon^{m-1}[\tilde{\mathrm{P}}_{\tau} + C(\varepsilon,\tilde{\mathrm{P}};\mathrm{I}_0+\mathrm{P}_0,\cdots,\mathrm{I}_m+\mathrm{P}_m)]\\
			R(\sum_{k=0}^m U_k) &= \varepsilon^{m-1}[\tilde{\mathrm{P}}_{\tau} + C(\varepsilon,\tilde{\mathrm{P}};\mathrm{P}_0,\cdots,\mathrm{P}_m)].
		\end{aligned}
	\end{equation*}
	Here $C(\varepsilon,\tilde{\mathrm{P}};\mathrm{P}_0,\cdots,\mathrm{P}_m)$ depends smoothly on the $\varepsilon,\tilde{\mathrm{P}};\mathrm{P}_0,\cdots,\mathrm{P}_m$ and their first-order derivatives with respect to.
	
	Furthermore, it follows from the definition of $F_m$ that
	\begin{equation*}
		\begin{aligned}
			F_m &= \varepsilon^{-(m-1)}R(U_\varepsilon^m)-\varepsilon^{-(m-1)}R(\sum_{k=0}^m U_k) + O(\varepsilon)\\
			&=C_U(\varepsilon,\tilde{\mathrm{P}};\cdot)\mathrm{I} + O(\varepsilon),
		\end{aligned}
	\end{equation*}
	where $C_U(\varepsilon, \tilde{\mathrm{P}};\cdot)$ denotes the Fr\'echet derivative of the operator $C(\varepsilon, \tilde{\mathrm{P}};\cdot)$. Finally, the estimate in \eqref{4eq:fm-err} follows from the decay property of the $\mathrm{I}$ when $\tau$ tends to infinity.
\end{proof}

\subsection{Justification of formal expansions}\label{4-2sec}
Having constructed formal asymptotic approximations $U_\varepsilon^m$ for the initial-value problem \eqref{4eq:keyequ} and \eqref{initial-condition-key}, we prove here the validity of the approximations under Lemma \eqref{4lemma1} and under some regularity assumptions on the given data. For the sake of exactness, we refer to next remark and make the following assumption.

\begin{assumption}\label{4ass1}
Let $s>\frac{3}{2}$. 
\begin{enumerate}
	\item There exists a convex open set $G_0\subset\subset G$ satisfying $G_0\subset\subset G$ such that $\bar{U}(x; \varepsilon)\in G_0$ for all $\varepsilon >0$ and all $x\in\Omega$, and $\bar{U}(\cdot; \varepsilon) \in H^s$ is periodic on $\Omega$;
	\item $A(U;\varepsilon),Q(U;\varepsilon),P(U;\varepsilon),A_0(U;\varepsilon)$ are smooth function of $U\in G,\varepsilon\in [0,\varepsilon_0]$;
	\item $Q_U(U_0;0)U_{m+1} \in C([0, T_m],H^s)$;
	\item $U_\varepsilon^m$ takes value in $\bar{G}_0$ and satisfies $U_\varepsilon^m \in C([0,T_m],H^{s+1})\bigcap C^1([0,T_m],H^s) $. For sufficiently small $\varepsilon>0$,  
		\begin{equation}\label{initial-err}
			\parallel U_\varepsilon^m(0,\cdot) - \bar{U}(\cdot,\varepsilon)\parallel_s \leq c\varepsilon^m,
		\end{equation}
		and
		\begin{equation}\label{boundary-condition}
 			\sup_{0\leq t\leq T_m}\parallel U_\varepsilon^m - U_0 \parallel_s \leq C \varepsilon + C \varepsilon^2 B_\varepsilon(t), \qquad \parallel \partial_t U_\varepsilon^m\parallel_s \leq c + cB_\varepsilon(t),
		\end{equation}
		where $B_\varepsilon(t) =\varepsilon^{-2} e^{-\frac{\mu t}{\varepsilon^2}}$ and $\mu>0$ is a constant independent of $\varepsilon$.
\end{enumerate}
\end{assumption}

\begin{remark}
	The first assumption is necessary to apply the existence theorem, see \cite{kato1975cauchy}. The second assumption is obviously. The next can be verified by using the existence theory for parabolic system in \cite{Taylor}.
	\eqref{initial-err} is a natural condition on the initial data. It stands for initial errors. In the above subsection, we have constructed $U_k$ and $\mathrm{I}_k$. Now we show that, for any fixed $m\in \mathcal{N}$, the approximate solution $U_\varepsilon^m$ defined by \eqref{4eq:U-expansion} satisfies \eqref{boundary-condition}. Indeed, since $\mathrm{I}_0^I=0$ and $\mathrm{I}^{II}_0(\cdot, \tau)$ decays exponentially fast to zero as $\tau \rightarrow +\infty$ with $\tau=t/\varepsilon^2$, thus $\norm{\mathrm{I}_0}_s \leq C e^{-\frac{\mu t}{\varepsilon^2}}$ with $\mu>0$ a constant independent of $\varepsilon$. Meanwhile 
	\begin{equation*}
		\begin{aligned}
			\partial_t \mathrm{I}^{II}_0(\cdot, \tau) = \varepsilon^{-2} \partial_{\tau} \mathrm{I}^{II}_0(\cdot, \tau) = \varepsilon^{-2} q(\bar{u}_0(x),\mathrm{I}_0^{II};0).
		\end{aligned}
	\end{equation*}
	Therefore
	\begin{equation*}
		\begin{aligned}
			\norm{U_\varepsilon^m - U_0}_s &= \norm{\sum_{k=1}^m \varepsilon^k U_k(\cdot, t) + \sum_{k=0}^m \varepsilon^k \mathrm{I}_k(\cdot, t/\varepsilon^2)}_s \leq C \varepsilon + C \varepsilon^2 B_\varepsilon(t),\\
			\norm{\partial_t U_\varepsilon^m}_s &= \norm{\partial_t \mathrm{I}_0(\cdot, \tau) + \sum_{k=0}^m \varepsilon^k U_k(\cdot, t) + \sum_{k=1}^m \varepsilon^k \mathrm{I}_k(\cdot, t/\varepsilon^2)}_s \leq c + cB_\varepsilon(t).
		\end{aligned}
	\end{equation*}	
\end{remark}

Fix $\varepsilon >0$ and recall assumption  \ref{4ass1}. According to Theorem 2.1 in \cite{majda2012compressible}, for any convex open set $G_1$ satisfying $G_0\subset\subset G_1\subset\subset G$, there exist  $T_\varepsilon>0$ such that that initial value problem \eqref{4eq:keyequ} and \eqref{initial-condition-key} for the symmetrizable hyperbolic system has a unique $H^s$--solution $U^\varepsilon$ satisfying $U^\varepsilon \in C([0,T_\varepsilon],H^{s})$ and $U^\varepsilon \in \bar{G}_1$. Without loss of generality, we assume that $T_\varepsilon$ is the maximal time interval where the $H^s$--solution $U^\varepsilon$ take value in $\bar{G}_1$. Note that $T_\varepsilon$ may shrink to zero as so does $\varepsilon$.

In order to show $T_\varepsilon \geq T_m$, we state our main result.

\begin{theorem}\label{4theorem-1}
	Under the assumption \ref{4ass1} with $m>2$, suppose $s>\frac{3}{2}$ is a integer, $[0, T_\varepsilon]$ is the maximal time interval where \eqref{4eq:keyequ} has a solution $U^\varepsilon \in C([0,T_\varepsilon],H^{s})$  with values in a convex set $\bar{G}_1$, and $[0,T_m]$ a time interval where the asymptotic approximation $U_\varepsilon^m$ of the form \eqref{4eq:U-expansion}. 
	
	Then there exists a constant $K$, independent of $\varepsilon$ but dependent on $T_m$, such that
	\begin{equation*}
		\parallel U^\varepsilon(t) - U_\varepsilon^m(t) \parallel_s \leq K \varepsilon^m,
	\end{equation*}
	for sufficiently small $\varepsilon$ and $t \in [0, \min\{T_m, T_\varepsilon\})$.
\end{theorem}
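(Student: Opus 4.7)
The plan is to control the error $V^\varepsilon := U^\varepsilon - U_\varepsilon^m$ in $H^s$ by a weighted energy argument based on the symmetrizer $A_0$ from Theorem \ref{theorem-stability}, using the dissipation supplied by the structural stability condition on the source and the residual bound in Theorem \ref{4theorem:asp-err}, and then closing the resulting differential inequality with the Gronwall-type Lemma \ref{4lemma:gronwall}.

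First I would subtract the equations for $U^\varepsilon$ and $U_\varepsilon^m$, invoke the definition of the residual in \eqref{4eq:asp-err}, and write
\begin{equation*}
\partial_t V^\varepsilon + \tfrac{1}{\varepsilon} A(U^\varepsilon;\varepsilon)\partial_x V^\varepsilon = \tfrac{1}{\varepsilon^2}\tilde Q\, V^\varepsilon - \tfrac{1}{\varepsilon}\bigl(\tilde A\, V^\varepsilon\bigr) \partial_x U_\varepsilon^m - R(U_\varepsilon^m),
\end{equation*}
where $\tilde Q := \int_0^1 Q_U(U_\varepsilon^m + \sigma V^\varepsilon;\varepsilon)\,d\sigma$ and $\tilde A := \int_0^1 A_U(U_\varepsilon^m + \sigma V^\varepsilon;\varepsilon)\,d\sigma$. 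Initially $\|V^\varepsilon(0)\|_s \leq c\varepsilon^m$ by \eqref{initial-err}. For each multi-index $|\alpha| \leq s$, apply $\partial^\alpha$, multiply on the left by $2(\partial^\alpha V^\varepsilon)^T A_0(U^\varepsilon;\varepsilon)$ and integrate. The symmetry $A_0 A = A^T A_0$ eliminates the top-order convective contribution, while Lemma \ref{4lemma:calculus_inequ} controls all commutators, producing an inequality of the schematic form
\begin{equation*}
\tfrac{d}{dt}\langle A_0 \partial^\alpha V^\varepsilon,\partial^\alpha V^\varepsilon\rangle \leq \tfrac{2}{\varepsilon^2}\langle A_0 Q_U V^\varepsilon, V^\varepsilon\rangle_\alpha + \tfrac{C}{\varepsilon}(1+\varepsilon B_\varepsilon)\|V^\varepsilon\|_s^2 + \tfrac{C}{\varepsilon^2}\|V^\varepsilon\|_s^3 + \|R(U_\varepsilon^m)\|_s\|V^\varepsilon\|_s.
\end{equation*}

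Second, the key step is to extract dissipation from the source. By Lemma \ref{4lemma1}(ii), item (iii) of the structural stability condition gives $A_0 Q_U + Q_U^T A_0 \leq -P^T\,\mathrm{diag}(0,I)\,P$, and since $P$ is a scalar matrix the leading source term contributes $-\frac{c}{\varepsilon^2}\|V^{\mathrm{II},\varepsilon}\|_s^2$, i.e.\ dissipation on the $w$-block. The hydrodynamic block $V^{\mathrm{I},\varepsilon}$ enjoys no direct dissipation; however, the would-be-singular advective term $\tfrac{1}{\varepsilon}A^{11}(U^\varepsilon;\varepsilon)\partial_x V^{\mathrm{I},\varepsilon}$ is tamed because $A^{11}(U_{eq};0) = 0$ and $\partial_u A^{11}(U_{eq};0) = 0$ from Lemma \ref{4lemma1}(iii), which together with $\|U_\varepsilon^m - U_0\|_s \leq C\varepsilon + C\varepsilon^2 B_\varepsilon$ from \eqref{boundary-condition} yield $\|\tfrac{1}{\varepsilon}A^{11}(U^\varepsilon;\varepsilon)\|_{L^\infty} \leq C(1+\varepsilon B_\varepsilon) + C\|V^\varepsilon\|_s/\varepsilon$. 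The singular cross-contribution from the $w$-part of $V^\varepsilon$ is then absorbed against the just-obtained $\varepsilon^{-2}$ dissipation, in line with Yong's standard partial-dissipation mechanism. Summing over $|\alpha|\leq s$, inserting the residual bound $\|R(U_\varepsilon^m)\|_s \leq C\varepsilon^{m-1}(\varepsilon + e^{-\mu t/\varepsilon^2})$ from Theorem \ref{4theorem:asp-err}, and setting $\psi(t) := \|V^\varepsilon(t)\|_s^2/\varepsilon^{2m}$, after tracking powers of $\varepsilon$ one arrives at $\psi'(t) \leq b_1(t)\psi(t)^{3/2} + b_2(t)\psi(t) + b_3(t)$, with $\int_0^{T_m}|b_i(t)|\,dt$ bounded uniformly in $\varepsilon$ because $\int_0^{T_m} B_\varepsilon(t)\,dt = O(1)$ and $\int_0^{T_m} e^{-\mu t/\varepsilon^2}\,dt = O(\varepsilon^2)$. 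Lemma \ref{4lemma:gronwall}, together with $\psi(0) \leq c$, then yields $\psi(t) \leq K$ on $[0, \min(T_m, T_\varepsilon))$, i.e.\ $\|V^\varepsilon(t)\|_s \leq K\varepsilon^m$ as claimed.

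\textbf{Main obstacle.} The principal difficulty is the simultaneous treatment of three distinct $\varepsilon$-scales: the $\varepsilon^{-2}$ source singularity, the $\varepsilon^{-1}$ convective singularity, and the initial-layer amplitude $B_\varepsilon(t) \sim \varepsilon^{-2}$ carried by $\partial_t U_\varepsilon^m$. The $\varepsilon^{-2}$ source dissipates only the $w$-block $V^{\mathrm{II},\varepsilon}$, so for the hydrodynamic block $V^{\mathrm{I},\varepsilon}$ one must exploit the double degeneracy $A^{11}(U_{eq};0) = 0$, $\partial_u A^{11}(U_{eq};0) = 0$ together with the proximity $\|U_\varepsilon^m - U_0\|_s = O(\varepsilon + \varepsilon^2 B_\varepsilon)$ in order to convert the $\varepsilon^{-1}$ flux singularity into a coefficient whose time integral is $O(1)$ uniformly in $\varepsilon$. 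The hypothesis $m > 2$ enters precisely here: it supplies the extra powers of $\varepsilon$ needed to absorb the nonlinear cross-term $\tfrac{1}{\varepsilon^2}\|V^\varepsilon\|_s^3$ and the initial-layer-induced contribution $\tfrac{1}{\varepsilon}\|V^\varepsilon\|_s^2\cdot\varepsilon^2 B_\varepsilon$ into the right-hand side of the Gronwall inequality without spoiling the $\varepsilon^m$-rate.
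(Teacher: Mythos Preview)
Your overall strategy is the same as the paper's: energy estimates weighted by the symmetrizer $A_0$, dissipation on $V^{\mathrm{II}}$ from the structural stability condition, the double degeneracy $A^{11}(U_{eq};0)=\partial_u A^{11}(U_{eq};0)=0$ to tame the $1/\varepsilon$ convective singularity on the hydrodynamic block, and a nonlinear Gronwall argument closing via Lemma~\ref{4lemma:gronwall} with $m>2$. Two concrete points, however, are mishandled and would make your inequality fail as written.

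\textbf{The residual bound.} You quote $\|R(U_\varepsilon^m)\|_s \leq C\varepsilon^{m-1}(\varepsilon+e^{-\mu t/\varepsilon^2})$, but Theorem~\ref{4theorem:asp-err} actually gives $R(U_\varepsilon^m)=\varepsilon^{m-1}Q_U(U_0;0)U_{m+1}+\varepsilon^{m-1}F_m$, and only the \emph{second} summand obeys that bound; the first is $O(\varepsilon^{m-1})$, not $O(\varepsilon^{m})$. With $\|R\|_s\sim\varepsilon^{m-1}$ your inhomogeneous term $b_3\sim\|R\|_s^2/\varepsilon^{2m}\sim\varepsilon^{-2}$ has $\int_0^{T_m}b_3\,dt\sim T_m\varepsilon^{-2}\to\infty$, so the Gronwall step breaks. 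The paper fixes this by exploiting $Q_U(U_0;0)=\mathrm{diag}(0,q_w)$: the leading residual has only a $w$-component, so after pairing with $A_0(U_0;0)$ (which is block-diagonal by Lemma~\ref{4lemma1}(iv)) it meets only $V^{\mathrm{II}}_\alpha$ and yields, via Young, $\tfrac{\delta}{\varepsilon^2}\|V^{\mathrm{II}}_\alpha\|^2+C\varepsilon^{2m}$, which is absorbed by the dissipation. You need this structural splitting, not the crude $\|R\|_s\|V\|_s$.

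\textbf{The commutator hierarchy.} Applying $\partial^\alpha$ to the source produces a commutator of size $\tfrac{C}{\varepsilon^2}\|V^{\mathrm{II}}\|_{|\alpha|-1}^2$ at each level, which is not directly controlled by $\tfrac{c_1}{\varepsilon^2}\|V^{\mathrm{II}}_\alpha\|^2$. The paper closes this by summing the estimates with weights $\eta^{|\alpha|}$ for $\eta$ small, so that the lower-order commutator at level $|\alpha|$ is dominated by the dissipation at level $|\alpha|-1$. Your plain ``summing over $|\alpha|\leq s$'' does not achieve this; you need the weighted sum.

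With these two repairs your argument coincides with the paper's. The paper carries out the bookkeeping through the auxiliary variable $\triangle=\|V\|_s/\varepsilon^2$ rather than your $\psi=\|V\|_s^2/\varepsilon^{2m}$, and applies Lemma~\ref{4lemma:gronwall} to $\Phi(T)=CT_m\varepsilon^{2m-4}\exp\bigl[\int_0^T C(1+\triangle^2+B_\varepsilon)\,dt\bigr]$, so that the smallness hypothesis $\Phi(0)\leq\delta$ is guaranteed by $m>2$ and small~$\varepsilon$; this is cleaner than requiring the constant $c$ in \eqref{initial-err} to be small.
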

Before proving this theorem, we remark that $m>2$  is required by the following proof (see \eqref{4eq:v-err}) below). However, since 
\begin{equation*}
	U_\varepsilon^m(x,t) = U_\varepsilon^{m_0}(x,t) + \sum_{k=m_0+1}^m \varepsilon^k \bigg(U_k(x,t) + \mathrm{I}(x, t/\varepsilon^2)\bigg),
\end{equation*}
we have
\begin{equation*}
	\parallel U^\varepsilon(t) - U_\varepsilon^{m_0}(t) \parallel_s \leq \parallel U^\varepsilon(t) - U_\varepsilon^m(t) \parallel_s + \sum_{k=m_0+1}^m \varepsilon^k \parallel U_k(x,t) + \mathrm{I}(x, t/\varepsilon^2) \parallel_s.
\end{equation*}
and thus
\begin{equation*}
	\parallel U^\varepsilon(t) - U_\varepsilon^{m_0}(t) \parallel_s = O(\varepsilon^{m_0+1})
\end{equation*}
for any $m_0 \leq m$ provided that the coefficients of $\varepsilon^k$ in the sum are bounded.

In addition, on the basis of Theorem \ref{4theorem-1}, we use exactly the same argument in \cite{yong1999singular} to obtain
\begin{theorem}\label{4theorem-3}
	The hypotheses of Theorem \ref{4theorem-1} imply $T_\varepsilon \geq T_m$.
\end{theorem}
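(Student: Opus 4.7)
The plan is to argue by contradiction, exploiting the fact that Theorem \ref{4theorem-1} gives $U^\varepsilon$ as a small $H^s$-perturbation of an approximate solution whose range sits in the compactly-contained set $\bar G_0 \subset\subset G_1$. Concretely, suppose $T_\varepsilon < T_m$. Then by the definition of $T_\varepsilon$ as the maximal time on which the $H^s$-solution exists with values in $\bar G_1$, one of the following must happen as $t \to T_\varepsilon^-$: either $\|U^\varepsilon(t)\|_s$ blows up, or $U^\varepsilon(t,\cdot)$ touches the boundary $\partial G_1$ (see the standard continuation criterion in \cite{majda2012compressible,kato1975cauchy}).

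The first step is to rule out $H^s$-blowup. Applying Theorem \ref{4theorem-1} on $[0,T_\varepsilon)\subset[0,\min\{T_m,T_\varepsilon\})$ gives
\begin{equation*}
\|U^\varepsilon(t)\|_s \leq \|U^m_\varepsilon(t)\|_s + K\varepsilon^m \leq \sup_{0\leq t\leq T_m}\|U^m_\varepsilon(t)\|_s + K\varepsilon^m,
\end{equation*}
and the right-hand side is finite by Assumption \ref{4ass1}(4). The second step is to rule out boundary touching. By Assumption \ref{4ass1}(4), $U^m_\varepsilon(t,x) \in \bar G_0$ for all $(t,x)\in[0,T_m]\times\Omega$, so the Euclidean distance $\mathrm{dist}(\bar G_0,\partial G_1)=:\delta>0$. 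Using the Sobolev embedding $H^s(\Omega)\hookrightarrow L^\infty(\Omega)$ valid for $s>1/2$ (here $s>3/2$), Theorem \ref{4theorem-1} yields
\begin{equation*}
\|U^\varepsilon(t,\cdot)-U^m_\varepsilon(t,\cdot)\|_{L^\infty} \leq C_s \|U^\varepsilon(t)-U^m_\varepsilon(t)\|_s \leq C_sK\varepsilon^m
\end{equation*}
for $t\in[0,T_\varepsilon)$. Choosing $\varepsilon$ small enough that $C_sK\varepsilon^m<\delta/2$, the pointwise value $U^\varepsilon(t,x)$ stays in an interior $\delta/2$-neighborhood of $\bar G_0$, hence uniformly away from $\partial G_1$.

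Combining these two steps, both the $H^s$-norm and the $L^\infty$-distance to $\partial G_1$ are controlled uniformly on $[0,T_\varepsilon)$. The standard local existence/continuation theorem for symmetrizable hyperbolic systems (Kato \cite{kato1975cauchy}) then applies at time $T_\varepsilon$ to extend $U^\varepsilon$ beyond $T_\varepsilon$ as an $H^s$-solution with values in $\bar G_1$, contradicting the maximality of $T_\varepsilon$. Hence $T_\varepsilon \geq T_m$, as claimed.

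The argument is essentially a soft bootstrap once Theorem \ref{4theorem-1} is in hand, and the only delicate point is the choice of the intermediate set $G_1$ with $G_0\subset\subset G_1\subset\subset G$, which was already built into the setup before Theorem \ref{4theorem-1}; this buffer is precisely what allows the $O(\varepsilon^m)$ closeness in $H^s$ (and hence in $L^\infty$) to prevent $U^\varepsilon$ from hitting $\partial G_1$. No separate estimate or calculation beyond what is already available is needed, so I expect no substantial obstacle; the proof should be only a few lines following the pattern in \cite{yong1999singular}.
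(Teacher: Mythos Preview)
Your proposal is correct and follows essentially the same route as the paper: assume $T_\varepsilon < T_m$, use Theorem~\ref{4theorem-1} together with the Sobolev embedding $H^s\hookrightarrow L^\infty$ to keep $U^\varepsilon$ inside a compact subset of $G_1$, and then invoke the local existence/continuation theorem from \cite{majda2012compressible} to contradict maximality of $T_\varepsilon$. The paper's proof is slightly terser (it asserts $U^\varepsilon(T_\varepsilon)\in G_0$ directly and restarts the local theory there), while you separate the $H^s$-blowup and boundary-touching alternatives explicitly; the substance is the same.
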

\begin{proof}
	If $T_\varepsilon \leq T_m$, then Theorem \ref{4theorem-1} gives
	\begin{equation*}
		\parallel U^\varepsilon(T_\varepsilon) - U_\varepsilon^m(T_\varepsilon) \parallel_s \leq K \varepsilon^m.
	\end{equation*}
	Thus, it follows from the embedding inequality that $U^\varepsilon(T_\varepsilon) \in G_0$ if if $\varepsilon$ is small enough. Now we could apply Theorem 2.1 in \cite{majda2012compressible}, beginning at the time $T_\varepsilon$, to continue this solution beyond $T_\varepsilon$. This is a contradiction. Therefore $T_\varepsilon \geq T_m$.
\end{proof}

Now we prove the Theorem \ref{4theorem-1}.

$\mathbf{The}$ $\mathbf{Proof}$ $\mathbf{of}$ $\mathbf{Theorem}$ $\mathbf{\ref{4theorem-1}}:$
Let $T_* = \min\{T_\varepsilon, T_m\}$, then both the exact solution $U^\varepsilon$ and the approximate solution $U_\varepsilon^m$ are defined on time interval $[0, T_*)$, satisfy equation \eqref{4eq:keyequ} and   
\begin{equation*}
	\begin{aligned}
		\partial_t U^m_\varepsilon + \frac{1}{\varepsilon}A(U^m_\varepsilon;\varepsilon)\partial_x U^m_\varepsilon = \frac{1}{\varepsilon^2}Q(U^m_\varepsilon;\varepsilon) + R^m_\varepsilon.
	\end{aligned}
\end{equation*}
On $[0, T_*)$, we define
\begin{equation*}
	V = U^\varepsilon - U_\varepsilon^m,
\end{equation*}
then
\begin{equation}\label{4eq:V-equation}
\begin{aligned}
	\partial_t V + \frac{1}{\varepsilon}A(U^{\varepsilon};\varepsilon)\partial_x V =& \frac{1}{\varepsilon}\big(A(U^m_\varepsilon;\varepsilon)-A(U^\varepsilon;\varepsilon)\big)\partial_x U_\varepsilon^m + \frac{1}{\varepsilon^2}\big(Q(U^\varepsilon;\varepsilon)-Q(U_\varepsilon^m;\varepsilon)\big) - R^m_\varepsilon.
\end{aligned}
\end{equation}
Applying $\partial_x^\alpha$ to the last equation for multi-index $\alpha$ satisfying $|\alpha|\leq s$, and setting $V_\alpha = \partial^\alpha V$, we get 
\begin{equation*}
	\begin{aligned}
		&\partial_t V_\alpha + \frac{1}{\varepsilon}A(U^{\varepsilon};\varepsilon)\partial_x V_\alpha \\
		=& \frac{1}{\varepsilon}\bigg\{\big(A(U^m_\varepsilon;\varepsilon)-A(U^\varepsilon;\varepsilon)\big)\partial_x U_\varepsilon^m\bigg\}_\alpha + \frac{1}{\varepsilon^2}\bigg\{\big(Q(U^\varepsilon;\varepsilon)-Q(U_\varepsilon^m;\varepsilon)\big)\bigg\}_\alpha \\
		&+ \frac{1}{\varepsilon}\bigg\{A(U^{\varepsilon};\varepsilon)\partial_x V_\alpha - [A(U^{\varepsilon};\varepsilon)\partial_x V]_\alpha\bigg\}- (R^m_\varepsilon)_\alpha.\\
	\end{aligned}
\end{equation*}
We consider the energy norm $e(V_\alpha(x,t)) = V_\alpha^T A_0(U^\varepsilon;\varepsilon)V_\alpha$. Multiplying the last equation by $2V_\alpha^T A_0(U^\varepsilon;\varepsilon)$ and integrating over $x\in \Omega$ yields 
\begin{equation}\label{4eq:err}
	\begin{aligned}
		\frac{\mathrm{d}}{\mathrm{d} t}\int e(V_\alpha) \mathrm{d}x 
		=& \frac{2}{\varepsilon} \int V_\alpha^T A_0(U^\varepsilon;\varepsilon)\bigg\{\bigg(A(U^m_\varepsilon;\varepsilon)-A(U^\varepsilon;\varepsilon)\bigg)\partial_x U_\varepsilon^m\bigg\}_\alpha \mathrm{d}x  \\
		& + \frac{2}{\varepsilon^2} \int V_\alpha^T A_0(U^\varepsilon;\varepsilon)\bigg\{\bigg(Q(U^\varepsilon;\varepsilon)-Q(U_\varepsilon^m;\varepsilon)\bigg)\bigg\}_\alpha \mathrm{d}x \\
		& + \frac{2}{\varepsilon} \int V_\alpha^T A_0(U^\varepsilon;\varepsilon)\bigg(A(U^{\varepsilon};\varepsilon)\partial_x V_\alpha - [A(U^{\varepsilon};\varepsilon)\partial_x V]_\alpha\bigg) \mathrm{d}x \\
		& - 2 \int V_\alpha^T A_0(U^\varepsilon;\varepsilon)\partial_\alpha (R^m_\varepsilon) \mathrm{d}x  \\ 
		& + \int V_\alpha^T \bigg(\partial_t (A_0(U^\varepsilon;\varepsilon)) + \frac{1}{\varepsilon} \partial_x (A_0(U^\varepsilon;\varepsilon)A(U^\varepsilon;\varepsilon))\bigg)V_\alpha \mathrm{d}x \\
		\triangleq & I_1^\alpha + I_2^\alpha + I_3^\alpha + I_4^\alpha + I_5^\alpha.
	\end{aligned}
\end{equation}
Next we estimate each term in the right-hand side of \eqref{4eq:err}. Firstly,
\begin{equation*}
	\begin{aligned}
			I_1^\alpha =& \frac{2}{\varepsilon}\int  V_\alpha^T A_0(U^\varepsilon;\varepsilon)\bigg\{\bigg(A(U^m_\varepsilon;\varepsilon)-A(U^\varepsilon;\varepsilon)\bigg)\partial_x U_\varepsilon^m\bigg\}_\alpha \mathrm{d}x\\
			=& \frac{2}{\varepsilon}\int V_\alpha^T \bigg(A_0(U^\varepsilon;\varepsilon) - A_0(U_0; 0)\bigg)\bigg\{\bigg(A(U^m_\varepsilon;\varepsilon)-A(U^\varepsilon;\varepsilon)\bigg)\partial_x U_\varepsilon^m\bigg\}_\alpha \mathrm{d}x\\
			&+\frac{2}{\varepsilon}\int V_\alpha^T A_0(U_0;0)\bigg\{\bigg(A(U^m_\varepsilon;\varepsilon)-A(U^\varepsilon;\varepsilon)\bigg)\partial_x U_\varepsilon^m\bigg\}_\alpha \mathrm{d}x. 
	\end{aligned}
\end{equation*}
Recall that $U^\varepsilon$ and $U_0$ takes
values in a compact subset $G_1$. By using Assumption \ref{4ass1}, we can obtain
\begin{equation}\label{4eq:A0-err}
	\begin{aligned}
		&A_0(U^\varepsilon;\varepsilon) - A_0(U_0;0)\\
		&= A_0(U^\varepsilon;\varepsilon) - A_0(U^\varepsilon;0) + A_0(U^\varepsilon;0) - A_0(U_0;0)\\
		&= -\varepsilon \int_0^1 A_{0\varepsilon}(U^\varepsilon;\theta\varepsilon) \mathrm{d} \theta - |U^\varepsilon - U_0|\int_0^1 A_{0U}(U^\varepsilon + \theta(U_0 - U^\varepsilon);0)\mathrm{d} \theta\\
		&\leq C\varepsilon + C |U^\varepsilon - U_0|\\
		&\leq C(\varepsilon + |U^\varepsilon - U_\varepsilon^m| + |U^m_\varepsilon - U_0|) \\
		&\leq C\varepsilon + C\varepsilon^2 \triangle + C\varepsilon^2 B_\varepsilon(t),
	\end{aligned}
\end{equation}
where $\triangle \dot{=} \parallel U_\varepsilon^m - U^\varepsilon \parallel_s/ \varepsilon^2 $. Here and below, $C$ is a generic constant which may change from line to line. Since
\begin{equation*}
	\begin{aligned}
		A(U^\varepsilon;\varepsilon)-A(U_\varepsilon^m;\varepsilon)= -\int_{0}^1 A_{U}(U_\varepsilon^m + \theta(U^\varepsilon-U_\varepsilon^m);\varepsilon)V \mathrm{d} \theta.
	\end{aligned}
\end{equation*}
Using the calculus inequalities in Sobolev spaces \eqref{4lemma:calculus_inequ}, we get
\begin{equation*}
	\begin{aligned}
		&\parallel \bigg\{\bigg(A(U^m_\varepsilon;\varepsilon)-A(U^\varepsilon;\varepsilon)\bigg)\partial_x U_\varepsilon^m\bigg\}_\alpha \parallel
		\leq C\parallel V \parallel_\alpha,
	\end{aligned}
\end{equation*}
For the first term of $I_1^\alpha$, we have
\begin{equation*}
	\begin{aligned}
		&\int \frac{2}{\varepsilon}V_\alpha^T \bigg(A_0(U^\varepsilon;\varepsilon) - A_0(U_0; 0)\bigg)\bigg\{\bigg(A(U^m_\varepsilon;\varepsilon)-A(U^\varepsilon;\varepsilon)\bigg)\partial_x U_\varepsilon^m\bigg\}_\alpha \mathrm{d}x \\
		&\leq C(1 + \varepsilon \triangle + \varepsilon B_\varepsilon(t))\parallel V\parallel_s^2.
	\end{aligned}
\end{equation*}
According to Lemma \ref{4lemma1}, we know that $A_0(U_0; 0)$ is a block diagonal matrix, then the second term of $I_1^\alpha$ can be rewritten as
\begin{equation}\label{e2-1}
	\begin{aligned}
		&\frac{2}{\varepsilon} \int V_\alpha^T A_0(U_0;0)\bigg\{\bigg(A(U^m_\varepsilon;\varepsilon)-A(U^\varepsilon;\varepsilon)\bigg)\partial_x U_\varepsilon^m\bigg\}_\alpha \mathrm{d}x\\
		=& \frac{2}{\varepsilon}\int V_\alpha^{IT} A^{11}_0(U_0;0)\bigg\{\bigg(A^{11}(U^m_\varepsilon;\varepsilon)-A^{11}(U^\varepsilon;\varepsilon)\bigg)\partial_x u_\varepsilon^m\bigg\}_\alpha \mathrm{d}x\\
		& + \frac{2}{\varepsilon}\int V_\alpha^{IT} A^{11}_0(U_0;0)\bigg\{\bigg(A^{12}(U^m_\varepsilon;\varepsilon)-A^{12}(U^\varepsilon;\varepsilon)\bigg)\partial_x w_\varepsilon^m\bigg\}_\alpha \mathrm{d}x\\
		& + \frac{2}{\varepsilon}\int V_\alpha^{IIT} A^{22}_0(U_0;0)\bigg\{\bigg(A^{21}(U^m_\varepsilon;\varepsilon)-A^{21}(U^\varepsilon;\varepsilon)\bigg)\partial_x u_\varepsilon^m\bigg\}_\alpha \mathrm{d}x\\
		& + \frac{2}{\varepsilon}\int V_\alpha^{IIT} A^{22}_0(U_0;0)\bigg\{\bigg(A^{22}(U^m_\varepsilon;\varepsilon)-A^{22}(U^\varepsilon;\varepsilon)\bigg)\partial_x w_\varepsilon^m\bigg\}_\alpha \mathrm{d}x.
	\end{aligned}
\end{equation}
The last two terms on the right-hand side are bounded by
\begin{equation*}
	\frac{C}{\varepsilon} \parallel V\parallel_s \parallel V_\alpha^{II}\parallel \leq \frac{\delta}{\varepsilon^2}\parallel V_\alpha^{II} \parallel^2 + C\parallel V\parallel_s^2.
\end{equation*}
Since $w_0 = 0$, the Assumption \ref{4ass1} yields $\parallel w_\varepsilon^m\parallel_s \leq C(\varepsilon + \varepsilon^2 B_\varepsilon(t))$. Therefore
\begin{equation*}
	\begin{aligned}
		&\int V_\alpha^{IT} A^{11}_0(U_0;0)\bigg\{\bigg(A^{12}(U^m_\varepsilon;\varepsilon)-A^{12}(U^\varepsilon;\varepsilon)\bigg)\partial_x w_\varepsilon^m\bigg\}_\alpha \mathrm{d}x\\
		&~ \leq C(\varepsilon + \varepsilon^2 B_\varepsilon(t)) \parallel V\parallel_s^2.
	\end{aligned}
\end{equation*}
For the first term in \eqref{e2-1}, we have
\begin{equation}\label{4-92}
	\begin{aligned}
		&A^{11}(U^m_\varepsilon;\varepsilon)-A^{11}(U^\varepsilon;\varepsilon)\\
		=& A^{11}(u^m_\varepsilon, w^m_\varepsilon;\varepsilon)-A^{11}(u^\varepsilon,w^m_\varepsilon;\varepsilon) + A^{11}(u^\varepsilon,w^m_\varepsilon;\varepsilon)-A^{11}(u^\varepsilon,w^\varepsilon;\varepsilon)\\
		=& -\int_0^1 \partial_u A^{11}(u_\varepsilon^m+\theta(u^\varepsilon -u_\varepsilon^m),w^m_\varepsilon;\varepsilon)V^{I} \mathrm{d} \theta\\
		& - \int_0^1 \partial_w A^{11}(u^\varepsilon; w_\varepsilon^m+\theta(w^\varepsilon -w_\varepsilon^m);\varepsilon)V^{II} \mathrm{d} \theta.
	\end{aligned}
\end{equation}
The second integral above is easily estimated due to the appearance of $\parallel V^{II}\parallel $. The first one can be treated due to condition $\partial_u A^{11}(U_0;0)=0$ in Lemma \ref{4lemma1}. Precisely, we write
\begin{equation*}
	\begin{aligned}
		&\partial_u A^{11}(u_\varepsilon^m+\theta(u^\varepsilon -u_\varepsilon^m),w^m_\varepsilon;\varepsilon) \\
		=& \partial_u A^{11}(u_\varepsilon^m+\theta(u^\varepsilon -u_\varepsilon^m),w^m_\varepsilon;\varepsilon) - \partial_u A^{11}(u_0,w^m_\varepsilon;\varepsilon)\\
		& + \partial_u A^{11}(u_0,w^m_\varepsilon;\varepsilon)- \partial_u A^{11}(u_0,0;\varepsilon) + \partial_u A^{11}(u_0,0;\varepsilon)- \partial_u A^{11}(u_0,0;0)\\
		=& \int_0^1 \partial^2_{uu} A^{11}(u(\theta,\theta'),w^m_\varepsilon;\varepsilon)(u_\varepsilon^m - u_0 +\theta(u^\varepsilon -u_\varepsilon^m))\mathrm{d} \theta'\\
		& + \int_0^1 \partial^2_{uw} A^{11}(u_0; \theta' w_\varepsilon^m;\varepsilon)w_\varepsilon^m \mathrm{d} \theta' + \int _0^1 \partial^2_{u\varepsilon}A^{11}(u_0,0;\theta' \varepsilon)\varepsilon \mathrm{d} \theta'.
	\end{aligned}
\end{equation*}
Here $u(\theta,\theta')=(1-\theta')u_0+\theta' u_\varepsilon^m+\theta(u^\varepsilon -u_\varepsilon^m)$. The integral in  \eqref{4-92} can be rewritten as
\begin{equation*}
	\begin{aligned}
		&\int_0^1  \partial_u A^{11}(u_\varepsilon^m+\theta(u^\varepsilon -u_\varepsilon^m),w^m_\varepsilon;\varepsilon)V^{I} \mathrm{d} \theta \\
		&= \int_0^1\int_0^1 \partial^2_{uu} A^{11}(u(\theta,\theta'),w^m_\varepsilon;\varepsilon)((u_\varepsilon^m - u_0 +\theta(u^\varepsilon -u_\varepsilon^m)),V^{I}) \mathrm{d} \theta \mathrm{d} \theta'\\
		&+ \int_0^1\int_0^1 \partial^2_{uw} A^{11}(u_0, \theta' w_\varepsilon^m;\varepsilon)(V^{I},w_\varepsilon^m) \mathrm{d} \theta\mathrm{d} \theta' + \int_0^1\int_0^1 \partial^2_{u\varepsilon} A^{11}(u_0, 0;\theta' \varepsilon)(\varepsilon,V^{I}) \mathrm{d} \theta\mathrm{d} \theta'.
	\end{aligned}
\end{equation*}
Since $\parallel w_\varepsilon^m \parallel $ and $\varepsilon$ both can be bounded by $C(\varepsilon + \varepsilon^2 B_\varepsilon(t))$, and 
\begin{equation*}
	\norm{u_\varepsilon^m -u_0}_s \leq C \varepsilon + C \varepsilon^2 B_\varepsilon(t),\qquad \norm{u^\varepsilon - u_\varepsilon^m}_s \leq C \varepsilon^2 \triangle,
\end{equation*}
then
\begin{equation*}
	\begin{aligned}
		&\int V_\alpha^{IT} A^{11}_0(U_0;0)\bigg\{\bigg(A^{11}(U^m_\varepsilon;\varepsilon)-A^{11}(U^\varepsilon;\varepsilon)\bigg)\partial_x u_\varepsilon^m\bigg\}_\alpha \mathrm{d}x \\
		&\leq \frac{\delta}{\varepsilon}\parallel V_\alpha^{II}\parallel + C(\varepsilon + \varepsilon^2 \triangle + \varepsilon^2 B_\varepsilon(t))\parallel V\parallel_s^2.
	\end{aligned}
\end{equation*}
Therefore 
\begin{equation}\label{4eq:err1}
	I_1^\alpha \leq \frac{\delta}{\varepsilon^2}\parallel V_\alpha^{II}\parallel  ^2 + C(1 + \varepsilon\triangle + \varepsilon B_\varepsilon(t))\parallel V\parallel_s^2.
\end{equation}

The second item is
\begin{equation*}
	I_2^\alpha = \frac{2}{\varepsilon^2} \int V_\alpha^T A_0(U^\varepsilon;\varepsilon)\bigg\{\bigg(Q(U^\varepsilon;\varepsilon)-Q(U_\varepsilon^m;\varepsilon)\bigg)\bigg\}_\alpha \mathrm{d}x.
\end{equation*}
We first rewrite $Q(U^\varepsilon;\varepsilon)-Q(U_\varepsilon^m;\varepsilon)$ as
\begin{equation*}
	\begin{aligned}
		&Q(U^\varepsilon;\varepsilon)-Q(U_\varepsilon^m;\varepsilon)\\
		=& Q_U(U_0;0)V+ \varepsilon \partial_\varepsilon Q_U(U_0;0)V\\
		& + [Q(U^\varepsilon;0) - Q(U_\varepsilon^m;0)-Q_U(U_0;0)V]\\
		& + \varepsilon[\partial_\varepsilon Q(U^\varepsilon;0) - \partial_\varepsilon Q(U_\varepsilon^m;0) - \partial_\varepsilon Q_u(U_0;0)V]\\
		& + [Q(U^\varepsilon;\varepsilon)-Q(U^\varepsilon;0)-\varepsilon \partial_\varepsilon Q(U^\varepsilon;0) - \big(Q(U^m_\varepsilon;\varepsilon)-Q(U^m_\varepsilon;0)-\varepsilon \partial_\varepsilon Q(U^m_\varepsilon;0)\big)]
	\end{aligned}
\end{equation*}
which implies that
\begin{equation*}
	I_2^\alpha = I_{21}^\alpha+I_{22}^\alpha+I_{23}^\alpha+I_{24}^\alpha+I_{25}^\alpha
\end{equation*}
with the natural correspondence for $I_{21}^\alpha$, $\cdots$, $I_{25}^\alpha$. Now we estimate each of these terms. For $I_{21}^\alpha$ we write
\begin{equation*}
	\begin{aligned}
		I_{21}^\alpha =& \frac{2}{\varepsilon^2}\int V_\alpha^T A_0(U^\varepsilon;\varepsilon)\{Q_U(U_0;0)V \}_\alpha \mathrm{d}x\\
		=&\frac{2}{\varepsilon^2} \int V_\alpha^T A_0(U_0;0)Q_U(U_0;0)V_\alpha \mathrm{d}x\\
		&+ \frac{2}{\varepsilon^2} \int V_\alpha^T A_0(U_0;0)[\partial^\alpha(Q_U(U_0;0)V) - Q_U(U_0;0)V_\alpha] \mathrm{d}x\\
		&+ \frac{2}{\varepsilon^2} \int V_\alpha^T [A_0(U^\varepsilon;\varepsilon) - A_0(U_0;0)]\{Q_U(U_0;0)V \}_\alpha \mathrm{d}x.
	\end{aligned}
\end{equation*}
From the structural stability conditions in Lemma \ref{4lemma1}, we can see
\begin{equation*}
	\begin{aligned}
		2\int V_\alpha^T A_0(U_0;0) Q_U(U_0;0) V_\alpha &= V_\alpha^T \bigg(A_0(U_0;0) Q_U(U_0;0) + Q_U^T(U_0;0)A_0(U_0;0) \bigg) V_\alpha \mathrm{d}x\\
		&\leq -c_0\parallel V_\alpha^{II} \parallel^2
	\end{aligned}
\end{equation*}
with $c_0$ a positive constant. Since $Q_U(U_0;0) = \mathrm{diag}(0,q_w(U_0,0))$, we have
\begin{equation*}
	\begin{aligned}		
		&\int V_\alpha^T A_0(U_0;0) [\partial^\alpha (Q_U(U_0;0)V) - Q_U(U_0;0)V_\alpha] \mathrm{d}x\\
		&= \int V_\alpha ^{\uppercase\expandafter {\romannumeral2}T}A^{22}_0(U_0;0) [\partial^\alpha(q_w(U_0;0) V^{II}) - q_w(U_0;0) V_\alpha^{II}] \mathrm{d}x\\
		&\leq C \parallel V_\alpha^{II}\parallel \parallel \partial^\alpha(q_w(U_0;0) V^{II}) - q_w(U_0;0) V_\alpha^{II} \parallel \\
		&\leq C \parallel V_\alpha^{II}\parallel \parallel q_w(U_0;0)\parallel_s \parallel V^{II}\parallel_{|\alpha|-1} \\
		& \leq \frac{\delta}{4}\parallel V_\alpha^{II}\parallel^2 + C \parallel V^{II}\parallel_{|\alpha|-1}^2.
	\end{aligned}
\end{equation*}
Note that the above term vanishes when $\alpha=0$. Here we use the calculus inequalities (Lemma \eqref{4lemma:calculus_inequ}). And for the remaining terms, we will use the calculus inequalities in Sobolev spaces repeatedly.
For the third item in $I_{21}^\alpha$, we have
\begin{equation*}
	\begin{aligned}
		&V_\alpha^T [A_0(U^\varepsilon;\varepsilon) - A_0(U_0;0)]\partial^\alpha(Q_U(U_0;0)V)\\
		=& V_\alpha^{IT}[A^{12}_0(U^\varepsilon;\varepsilon) - A^{12}_0(U_0;0)]\partial^\alpha(q_w(U_0;0)V^{II})\\
		&+ V_\alpha^{IIT}[A^{22}_0(U^\varepsilon;\varepsilon) - A^{22}_0(U_0;0)]\partial^\alpha(q_w(U_0;0)V^{II})\\
	\end{aligned}
\end{equation*}
Using \eqref{4eq:A0-err}, we have
\begin{equation*}
	\begin{aligned}
		&\int V_\alpha^T [A_0(U^\varepsilon;\varepsilon) - A_0(U_0;0)]\partial^\alpha(Q_U(U_0;0)V) \mathrm{d}x\\
		&\leq C(\varepsilon + \varepsilon^2 \triangle + \varepsilon^2 B_\varepsilon(t))\parallel V^{II}_\alpha\parallel \parallel V \parallel_s \\ 
		& \leq \frac{\delta}{4} \parallel V^{II}_\alpha\parallel^2 + C (\varepsilon + \varepsilon^2 \triangle + \varepsilon^2 B_\varepsilon(t))^2\parallel V \parallel_s^2.
	\end{aligned}
\end{equation*}
Therefore
\begin{equation*}
	I_{21}^\alpha \leq \frac{\delta - c_0}{\varepsilon^2} \norm{V_\alpha^{II}}^2 + \frac{C}{\varepsilon^2} \norm{V^{II}}^2_{|\alpha|-1} + C(1 + \varepsilon \triangle + \varepsilon B_\varepsilon(t))^2 \norm{V}_s^2.
\end{equation*}
Now, we consider $I_{22}^\alpha$,
\begin{equation*}
	\begin{aligned}
		I_{22}^\alpha = & \frac{2}{\varepsilon^2} \int V_\alpha^T A_0(U^\varepsilon;\varepsilon)\bigg\{\varepsilon \partial_\varepsilon Q_U(U_0;0)V\bigg\}_\alpha \mathrm{d}x\\
		=&\frac{2}{\varepsilon} \int V_\alpha^{IT} \bigg( A^{12}_0(U^\varepsilon;\varepsilon) - A^{12}_0(U_0;0) \bigg) \{\partial_\varepsilon q_u(U_0;0)V^{I}\}_\alpha \mathrm{d}x\\
		& + \frac{2}{\varepsilon} \int V_\alpha^{IT} A^{12}_0(U^\varepsilon;\varepsilon) \{\partial_\varepsilon q_w(U_0;0)V^{II}\}_\alpha \mathrm{d}x\\
		&+ \frac{2}{\varepsilon} \int V_\alpha^{IIT}A^{22}_0(U^\varepsilon;\varepsilon)\bigg\{\partial_\varepsilon q_u(U_0;0)V^{I} + \partial_\varepsilon q_w(U_0;0)V^{II}\bigg\}_\alpha \mathrm{d}x.
	\end{aligned}
\end{equation*}
The first term is bounded by $C(1 + \varepsilon \triangle + \varepsilon B_\varepsilon(t)) \norm{V}_s^2$ and the remaining terms are dominated by $\frac{C}{\varepsilon}\norm{V}_s \norm{V_\alpha^{II}}$.
Hence
\begin{equation*}
	\begin{aligned}
		I_{22}^\alpha  \leq  \frac{\delta}{\varepsilon^2}\parallel V_\alpha^{II} \parallel^2 + C(1 + \varepsilon \triangle + \varepsilon B_\varepsilon(t)) \norm{V}_s^2
	\end{aligned}
\end{equation*}
Moreover, $I_{23}^\alpha$ can be rewritten as
\begin{equation*}
	\begin{aligned}
		I_{23}^\alpha = & \frac{2}{\varepsilon^2} \int V_\alpha^T A_0(U^\varepsilon;\varepsilon)\bigg\{[Q(U^\varepsilon;0)  - Q(U_\varepsilon^m;0) - Q_U(U_0;0)V]\bigg\}_\alpha \mathrm{d}x\\
		= & \frac{2}{\varepsilon^2} \int V_\alpha^T (A_0(U^\varepsilon;\varepsilon) - A_0(U_0;0)) \bigg\{[Q(U^\varepsilon;0) - Q(U_\varepsilon^m;0)-Q_U(U_0;0)V]\bigg\}_\alpha \mathrm{d}x\\
		& + \frac{2}{\varepsilon^2} \int V_\alpha^{IIT} A^{22}_0(U_0;0)\bigg\{[q(U^\varepsilon;0) - q(U_\varepsilon^m;0)-q_U(U_0;0)V]\bigg\}_\alpha \mathrm{d}x.
	\end{aligned}
\end{equation*}
According to Lemma \ref{4lemma1}, we know
\begin{equation*}
	\begin{aligned}
		&Q(U^\varepsilon;0) - Q(U_\varepsilon^m;0)-Q_U(U_0;0)V \\
		&=\begin{pmatrix}
			0 \\ q(U^\varepsilon;0) - q(U_\varepsilon^m;0)-q_U(U_\varepsilon^m;0)V
		\end{pmatrix}  + \begin{pmatrix}
			0 \\ q_U(U_\varepsilon^m;0)V - q_U(U_0;0)V
		\end{pmatrix},
	\end{aligned}
\end{equation*}
By the Taylor formula, it is clear that
\begin{equation*}
	\begin{aligned}
		\parallel \partial_\alpha (q(U^\varepsilon;0) - q(U_\varepsilon^m;0)-q_U(U_\varepsilon^m;0)V)\parallel \leq C\parallel V \parallel_s^2 = C \varepsilon^2\triangle \parallel V \parallel_s,
	\end{aligned}
\end{equation*}
\begin{equation*}
	\begin{aligned}
		\parallel \partial_\alpha (q_U(U_\varepsilon^m;0)V - q_U(U_0;0)V )\parallel \leq C(\varepsilon + \varepsilon^2 B_\varepsilon(t))\parallel V\parallel_s.
	\end{aligned}
\end{equation*}
Using \eqref{4eq:A0-err}, we obtain 
\begin{equation*}
	\begin{aligned}
		& \frac{2}{\varepsilon^2} \int V_\alpha^T (A_0(U^\varepsilon;\varepsilon) - A_0(U_0;0)) \bigg\{[Q(U^\varepsilon;0) - Q(U_\varepsilon^m;0)-Q_U(U_0;0)V]\bigg\}_\alpha \mathrm{d}x\\
		& \leq \frac{C}{\varepsilon^2} |A_0(U^\varepsilon;\varepsilon) - A_0(U_0;0)| \norm{V}_s \parallel \bigg\{[Q(U^\varepsilon;0) - Q(U_\varepsilon^m;0)-Q_U(U_0;0)V]\bigg\}_\alpha \parallel \\
		& \leq C(1 + \varepsilon \triangle + \varepsilon B_\varepsilon(t))^2 \parallel V \parallel_s^2,
	\end{aligned}
\end{equation*}
and
\begin{equation*}
	\begin{aligned}
		&\frac{2}{\varepsilon^2} \int V_\alpha^{IIT} A^{22}_0(U_0;0)\bigg\{[q(U^\varepsilon;0) - q(U_\varepsilon^m;0)-q_U(U_0;0)V]\bigg\}_\alpha \mathrm{d}x\\
		&\leq \frac{C}{\varepsilon^2}(\varepsilon + \varepsilon^2 \triangle + \varepsilon^2 B_\varepsilon(t))\parallel V_\alpha^{II} \parallel \parallel V \parallel_s\\
		&\leq \frac{\delta}{\varepsilon^2}\parallel V_\alpha^{II} \parallel + C(1 + \varepsilon \triangle + \varepsilon B_\varepsilon(t))^2 \parallel V \parallel_s^2.
	\end{aligned}
\end{equation*}
Therefore
\begin{equation*}
	I_{23}^\alpha \leq \frac{\delta}{\varepsilon^2}\parallel V_\alpha^{II} \parallel + C(1 + \varepsilon \triangle +  B_\varepsilon(t))^2 \parallel V \parallel_s^2.
\end{equation*}
Similarly, $I_{24}^\alpha$ can be bounded by
\begin{equation*}
	\begin{aligned}
		I_{24}^\alpha &= \frac{2}{\varepsilon^2} \int V_\alpha^T A_0(U^\varepsilon;\varepsilon)\bigg\{\varepsilon\bigg[\partial_\varepsilon Q(U^\varepsilon;0) - \partial_\varepsilon Q(U_\varepsilon^m;0) - \partial_\varepsilon Q_u(U_0;0)V\bigg]\bigg\}_\alpha \mathrm{d}x\\
		&\leq \frac{\delta}{\varepsilon^2}\parallel V_\alpha^{II} \parallel + C(1 + \varepsilon \triangle + \varepsilon B_\varepsilon(t))^2 \parallel V \parallel_s^2.
	\end{aligned}
\end{equation*}
For the last term in $I_2^\alpha$, since
\begin{equation*}
\begin{aligned}
	&Q(U^\varepsilon;\varepsilon)-Q(U^\varepsilon;0)-\varepsilon \partial_\varepsilon Q(U^\varepsilon;0) - (Q(U^m_\varepsilon;\varepsilon)-Q(U^m_\varepsilon;0)-\varepsilon \partial_\varepsilon Q(U^m_\varepsilon;0)) \\
	& = \varepsilon^2 \int_0^1\int_0^1 \partial^2_{\varepsilon\varepsilon}Q_U(\tau U^\varepsilon+(1-\tau)U_\varepsilon^m;\theta\varepsilon) V \mathrm{d} \tau \mathrm{d} \theta.
\end{aligned}
\end{equation*}
We have
\begin{equation*}
	I_{25}^\alpha \leq \frac{2}{\varepsilon^2} C \varepsilon^2 \parallel V\parallel_s^2 \leq C \parallel V\parallel_s^2.
\end{equation*}
Note that $(1 + \varepsilon \triangle + \varepsilon B_\varepsilon(t))^2 \leq C(1 + \triangle^2 + B_\varepsilon(t))$. Therefore
\begin{equation}\label{4eq:err2}
	I_2^\alpha \leq \frac{4\delta - c_0}{\varepsilon^2}\parallel V_\alpha^{II} \parallel^2 + \frac{C}{\varepsilon^2} \parallel V^{II}\parallel^2_{|\alpha|-1} + C(1 + \triangle^2 + B_\varepsilon(t)) \parallel V\parallel_s^2.
\end{equation}

Next we estimate $I_3^\alpha$. To this end, we observe
\begin{equation*}
	\begin{aligned}
		 I_3^\alpha =& \frac{2}{\varepsilon} \int V_\alpha^T A_0(U^\varepsilon;\varepsilon)\bigg(A(U^{\varepsilon};\varepsilon)\partial_x V_\alpha - [A(U^{\varepsilon};\varepsilon)\partial_x V]_\alpha\bigg) \mathrm{d}x\\
		 =& \frac{2}{\varepsilon} \int V_\alpha^T (A_0(U^\varepsilon;\varepsilon) - A_0(U_0;0))\bigg(A(U_0;0)\partial_x V_\alpha - [A(U_0;0)\partial_x V]_\alpha\bigg) \mathrm{d}x\\
		 & + \frac{2}{\varepsilon} \int V_\alpha^T A_0(U^\varepsilon;\varepsilon)\bigg((A(U^{\varepsilon};\varepsilon) - A(U_0;0))\partial_x V_\alpha - [(A(U^{\varepsilon};\varepsilon) - A(U_0;0))\partial_x V]_\alpha\bigg) \mathrm{d}x\\
		 & + \frac{2}{\varepsilon} \int V_\alpha^T A_0(U_0;0)\bigg(A(U_0;0)\partial_x V_\alpha - [A(U_0;0)\partial_x V]_\alpha\bigg) \mathrm{d}x.
	\end{aligned}
\end{equation*}
Using the calculus inequalities and \eqref{4eq:A0-err}, the first two term in above equation can be bounded by
\begin{equation*}
	C(1+\varepsilon \triangle + \varepsilon B_\varepsilon(t))\parallel V \parallel_s^2,
\end{equation*}
According to Lemma \ref{4lemma1} we know that $A^{11}(U_0;0) \equiv 0$. Thus, the last term of $I_3^\alpha$ can be rewritten as
\begin{equation*}
	\begin{aligned}
		&\frac{2}{\varepsilon} \int V_\alpha^T A_0(U_0;0)(A(U_0;0)\partial_x V_\alpha - [A(U_0;0)\partial_x V]_\alpha ) \mathrm{d}x\\
		= & \frac{2}{\varepsilon} \int V_\alpha^{IT} A^{11}_0(U_0;0)(A^{12}(U_0;0)\partial_x V^{II}_\alpha - [A^{12}(U_0;0)\partial_x V^{II}]_\alpha) \mathrm{d}x\\
		& + \frac{2}{\varepsilon} \int V_\alpha^{IIT} A^{22}_0(U_0;0)(A^{21}(U_0;0)\partial_x V^{I}_\alpha - [A^{21}(U_0;0)\partial_x V^{I}]_\alpha) \mathrm{d}x\\
		& + \frac{2}{\varepsilon} \int V_\alpha^{IIT} A^{22}_0(U_0;0)(A^{22}(U_0;0)\partial_x V^{II}_\alpha - [A^{22}(U_0;0)\partial_x V^{II}]_\alpha) \mathrm{d}x,
	\end{aligned}
\end{equation*}
in which each term on the right-hand side contains $V^{II}$. By the calculus inequalities, it is easy to see that
\begin{equation*}
\begin{aligned}
	&\frac{2}{\varepsilon} \int V_\alpha^T A_0(U_0;0)(A(U_0;0)\partial_x V_\alpha - [A(U_0;0)\partial_x V]_\alpha) \mathrm{d}x\\
	& \leq \frac{C}{\varepsilon} \parallel V^{II}\parallel_{|\alpha|}\parallel V\parallel_s\\
	& \leq \frac{\delta}{\varepsilon^2} \parallel V^{II}\parallel^2_\alpha + C \parallel V \parallel^2_s.
\end{aligned}
\end{equation*}
This implies
\begin{equation}\label{4eq:err3}
	I_3^\alpha \leq \frac{\delta}{\varepsilon^2} \parallel V^{II}\parallel^2_\alpha + C(1+\varepsilon \triangle + \varepsilon B_\varepsilon(t))\parallel V \parallel_s^2.
\end{equation}

For $I_4^\alpha$, we have
\begin{equation*}
	\begin{aligned}
		I_4^\alpha &= -2 \int V_\alpha^T A_0(U^\varepsilon;\varepsilon)\partial_\alpha (R^m_\varepsilon) \mathrm{d}x\\
		&= -2 \int V_\alpha^T (A_0(U^\varepsilon;\varepsilon)-A_0(U_0; 0))\partial_\alpha (R^m_\varepsilon) \mathrm{d}x -  2 \int V_\alpha^T A_0(U_0;0)\partial_\alpha (R^m_\varepsilon) \mathrm{d}x.
	\end{aligned}
\end{equation*}
Using the $\parallel F_m\parallel_s \leq C\varepsilon + Ce^{-\mu \tau} = C(\varepsilon + \varepsilon^2 B_\varepsilon(t))$ in Theorem \ref{4theorem:asp-err} and Assumption  \ref{4ass1}, we obtain
\begin{equation*}
	\begin{aligned}
		& -2 \int V_\alpha^T (A_0(U^\varepsilon;\varepsilon)-A_0(U_0;0))\partial_\alpha (R^m_\varepsilon) \mathrm{d}x\\
		&\leq C \parallel U^\varepsilon - U_0\parallel_s \parallel R^m_\varepsilon \parallel_s \norm{V}_s \\
		&\leq \varepsilon^m (1 + \varepsilon \triangle + \varepsilon B_\varepsilon(t))(\parallel Q_{U}(U_0;0)U_{m+1}\parallel_s + \parallel F_m \parallel_s) \norm{V}_s\\
		& \leq C(1 + \varepsilon \triangle + \varepsilon B_\varepsilon(t))^2\norm{V}_s^2 + C\varepsilon^{2m}.
	\end{aligned}
\end{equation*}
The remaining term can be bounded by
\begin{equation*}
	\begin{aligned}
		&-2 \int V_\alpha^T A_0(U_0;0)\partial_\alpha (R^m_\varepsilon) \mathrm{d}x\\
		&=-\int \varepsilon^{m-1} V_\alpha^{IIT} A^{22}_0(U_0;0)\partial_\alpha (q_w(U_0;0)w_{m+1}) \mathrm{d}x + \int \varepsilon^{m-1} V_\alpha^T A_0(U_0;0)\partial_\alpha F_m \mathrm{d}x\\
		&\leq \frac{\delta}{\varepsilon^2}\parallel V_\alpha^{II} \parallel^2 + C(1 + \varepsilon B_\varepsilon(t))^2\parallel V_\alpha \parallel^2 + C\varepsilon^{2m}.
	\end{aligned}
\end{equation*}
Therefore
\begin{equation}\label{4eq:err4}
	I_4^\alpha \leq\frac{\delta}{\varepsilon^2}\parallel V_\alpha^{II} \parallel + C(1 + \triangle^2 + B_\varepsilon(t)) \norm{V}_s^2 + C\varepsilon^{2m}.
\end{equation}

The last term is 
\begin{equation*}
	\begin{aligned}
		I_5^\alpha &= \int V_\alpha^T \bigg(\partial_t (A_0(U^\varepsilon;\varepsilon)) + \frac{1}{\varepsilon} \partial_x (A_0(U^\varepsilon;\varepsilon)A(U^\varepsilon;\varepsilon))\bigg)V_\alpha \mathrm{d}x.
	\end{aligned}
\end{equation*}
And we have
\begin{equation*}
	V_\alpha^T \partial_t A_0(U^\varepsilon;\varepsilon))V_\alpha \leq C |\partial_t V + \partial_t U_\varepsilon^m |\parallel V \parallel_s^2.
\end{equation*}
The equation of $V$ implies 
\begin{equation*}
	\begin{aligned}
		\partial_t V =& - \frac{1}{\varepsilon}A(U^{\varepsilon};\varepsilon)\partial_x V + \frac{1}{\varepsilon}\bigg(A(U^m_\varepsilon;\varepsilon)-A(U^\varepsilon;\varepsilon)\bigg)\partial_x U_\varepsilon^m\\
		& + \frac{1}{\varepsilon^2}\bigg(Q(U^\varepsilon;\varepsilon)-Q(U_\varepsilon^m;\varepsilon)\bigg) - R^m_\varepsilon.
	\end{aligned}
\end{equation*}
Since $\parallel V \parallel_s = \varepsilon^2 \triangle$, we have
\begin{equation*}
	\begin{aligned}
		|- \frac{1}{\varepsilon}A(U^{\varepsilon};\varepsilon)\partial_x V |\leq C\frac{1}{\varepsilon} \parallel V \parallel_s = \varepsilon \triangle,\\
		|\frac{1}{\varepsilon}\bigg(A(U^m_\varepsilon;\varepsilon)-A(U^\varepsilon;\varepsilon)\bigg)\partial_x U_\varepsilon^m| \leq C\frac{1}{\varepsilon} \parallel V \parallel_s = \varepsilon \triangle, \\
		|R^m_\varepsilon|\leq C \varepsilon^{m-1}\leq C.
	\end{aligned}
\end{equation*}
Moreover,
\begin{equation*}
	\begin{aligned}
		&Q(U^\varepsilon;\varepsilon)-Q(U_\varepsilon^m;\varepsilon) \\
		= & Q(U^\varepsilon;\varepsilon)-Q(U_\varepsilon^m;\varepsilon) - \partial_U Q(U_\varepsilon^m;\varepsilon)V \\
		&+ (\partial_U Q(U_\varepsilon^m;\varepsilon) - \partial_U Q(U_0;\varepsilon))V\\
		& + (\partial_U Q(U_0;\varepsilon)-\partial_U Q(U_0;0)) V + \partial_U Q(U_0;0)V.
	\end{aligned}
\end{equation*}
From the Lemma \ref{4lemma1}, we obtain that
\begin{equation*}
	\begin{aligned}
		\partial_U Q(U_0;0) = \begin{pmatrix}
			0 & 0\\
			0 & q_w(U_0;0)
		\end{pmatrix},
	\qquad \partial_U Q(U_0;0) V = \begin{pmatrix}
			0 \\
			q_w(U_0;0) V^{II}
		\end{pmatrix}.
	\end{aligned}
\end{equation*}
Then
\begin{equation*}
	\begin{aligned}
		&|Q(U^\varepsilon;\varepsilon)-Q(U_\varepsilon^m;\varepsilon)|\\
		&\leq C(\parallel V\parallel_s^2 + \norm{U_\varepsilon^m -U_0}_\infty \parallel V\parallel_s  + \varepsilon \parallel V \parallel_s + \parallel V^{II} \parallel_s )\\
		&\leq C(\norm{U_\varepsilon^m -U_0}_s^2 + \parallel V\parallel_s^2 + \parallel V^{II} \parallel_s + \varepsilon^2)\\
		&\leq C\varepsilon^2(1 + \varepsilon B_\varepsilon(t))^2 + \varepsilon^4 \triangle^2 + C\parallel V^{II} \parallel_s .
	\end{aligned}
\end{equation*}
Thus
\begin{equation*}
	|\partial_t V\parallel \leq C(1 + \triangle^2 + B_\varepsilon(t)) +\frac{C}{\varepsilon^2} \norm{V^{II}}_s .
\end{equation*}
Noting $\parallel \partial_t U_\varepsilon^m\parallel_s \leq C + C B_\varepsilon(t)$ from Assumption \ref{4ass1}, we obtain
\begin{equation*}
	\begin{aligned}
		&\int V_\alpha^T \partial_t A_0(U^\varepsilon;\varepsilon))V_\alpha \mathrm{d}x\\
		&\leq  C(1 + \triangle^2 + B_\varepsilon(t)) \norm{V}_s^2 + \frac{C}{\varepsilon^2}\parallel V^{II} \parallel_s \parallel V \parallel_s^2.
	\end{aligned}
\end{equation*}
Setting $\hat{A}(U^\varepsilon;\varepsilon)=A_0(U^\varepsilon;\varepsilon)A(U^\varepsilon;\varepsilon)$, the second term of $I_5^\alpha$ can be treated as
\begin{equation*}
	\begin{aligned}
	 	&\frac{1}{\varepsilon}\int V_\alpha^T \partial_x (A_0(U^\varepsilon;\varepsilon)A(U^\varepsilon;\varepsilon))V_\alpha \mathrm{d}x\\
		=& \frac{1}{\varepsilon} \int V^{IT}_\alpha\partial_x \hat{A}^{11}(U^\varepsilon;\varepsilon)V^{I}_\alpha \mathrm{d}x + \frac{2}{\varepsilon} \int V^{IT}_\alpha\partial_x \hat{A}^{12}(U^\varepsilon;\varepsilon)V^{II}_\alpha \mathrm{d}x\\
		 &+  \frac{1}{\varepsilon} \int V^{IIT}_\alpha\partial_x \hat{A}^{22}(U^\varepsilon;\varepsilon)V^{II}_\alpha \mathrm{d}x.
	\end{aligned}
\end{equation*}
Obviously, the last two terms are bounded by
\begin{equation*}
	\frac{C}{\varepsilon}\parallel V \parallel_s \parallel V^{II}_\alpha \parallel \leq \frac{\delta}{\varepsilon^2}\parallel V^{II}_\alpha \parallel^2 + C \parallel V \parallel_s^2.
\end{equation*}
Since $A^{11}(U_0;0) = 0$ and $A_0(U_0;0)$ is a block-diagonal matrix, we have $\hat{A}^{11}(U_0;0) = 0$. The first term can be bounded by
\begin{equation*}
	\begin{aligned}
		& \frac{1}{\varepsilon} \int V^{IT}_\alpha\partial_x \hat{A}^{11}(U^\varepsilon;\varepsilon)V^{I}_\alpha \mathrm{d}x =  \frac{1}{\varepsilon} \int V^{IT}_\alpha\partial_x (\hat{A}^{11}(U^\varepsilon;\varepsilon) -\hat{A}^{11}(U_0;0)) V^{I}_\alpha \mathrm{d}x\\
		&\leq C \frac{1}{\varepsilon} \parallel \partial_x(U^\varepsilon - U_0)\parallel_{\infty} \parallel V\parallel_s^2 \\
		&\leq C(1 + \varepsilon \triangle +  \varepsilon B_\varepsilon(t))\parallel V\parallel_s^2.
	\end{aligned}
\end{equation*}  
Therefore 
\begin{equation}\label{4eq:err5}
	\begin{aligned}
		I_5^\alpha \leq & \frac{\delta}{\varepsilon^2}\parallel V^{II}_\alpha \parallel^2 + C(1+ \triangle^2 + B_\varepsilon(t)) \norm{V}_s^2 + \frac{C}{\varepsilon^2}\parallel V^{II} \parallel_s\parallel V\parallel_s^2.
	\end{aligned}
\end{equation}

substituting \eqref{4eq:err1}--\eqref{4eq:err5} into the inequality \eqref{4eq:err} yields
\begin{equation*}
	\begin{aligned}
		&\frac{\mathrm{d} }{\mathrm{d} t}\int e(V_\alpha) \mathrm{d} x + \frac{c_0 - 8\delta }{\varepsilon^2}\parallel V_\alpha^{II} \parallel^2 \\
		& \leq \frac{C}{\varepsilon^2} \parallel V^{II}\parallel^2_{|\alpha|-1} + C(1 +\triangle^2 + B_\varepsilon(t))\parallel V\parallel_s^2 + \frac{C}{\varepsilon^2}\parallel V^{II} \parallel_s\parallel V\parallel_s^2 + C\varepsilon^{2m}.
	\end{aligned}
\end{equation*}
Let $\delta$ to be sufficiently small such that $c_1 = c_0 -8\delta \in (0,c_0) $. then we have
\begin{equation}\label{4eq:estimate}
	\begin{aligned}
		&\frac{\mathrm{d} }{\mathrm{d} t}\int_\Omega e(V_\alpha) \mathrm{d} x + \frac{c_1}{\varepsilon^2}\parallel V_\alpha^{II} \parallel^2 \\
		&\leq \frac{C}{\varepsilon^2} \parallel V^{II}\parallel^2_{|\alpha|-1} + C(1 +\triangle^2 + B_\varepsilon(t))\parallel V\parallel_s^2 + \frac{C}{\varepsilon^2}\parallel V^{II} \parallel_s\parallel V\parallel_s^2 + C\varepsilon^{2m}.
	\end{aligned}
\end{equation}
Recall $\parallel V^{II}\parallel^2_{-1}=0$. when $|\alpha|=1$, we see that $ \frac{C}{\varepsilon^2} \parallel V^{II}\parallel^2_{|\alpha|-1}$ on the right-hand side of\eqref{4eq:estimate} can be controlled by $\frac{c_1}{\varepsilon^2}\parallel V_\alpha^{II} \parallel^2$. More generally, let $\eta\in (0,1]$, Multiplying \eqref{4eq:estimate} by $\eta^{|\alpha|}$ and summing up the equalities for all index $\alpha$ with $|\alpha|\leq s$ yields 
\begin{equation*}
	\begin{aligned}
		&\frac{\mathrm{d} }{\mathrm{d} t} \sum_{|\alpha|\leq s} \eta^{|\alpha|} \int_\Omega e(V_\alpha) \mathrm{d} x + \frac{c_1}{\varepsilon^2}\sum_{|\alpha|\leq s}\eta^{|\alpha|}\parallel V_\alpha^{II} \parallel^2 \\
		\leq & \frac{C}{\varepsilon^2} \sum_{|\alpha|\leq s-1}\eta^{|\alpha|+1}\parallel V^{II}\parallel^2_{|\alpha|} + C(1 +\triangle^2 + B_\varepsilon(t))\parallel V\parallel_s^2\\
		& + \frac{C}{\varepsilon^2}\parallel V^{II} \parallel_s\parallel V\parallel_s^2  + C\varepsilon^{2m},
	\end{aligned}
\end{equation*}
in which $C$ is independent of $\eta$. Let $\eta$ be suitably small. Then
\begin{equation*}
	\begin{aligned}
		\frac{C}{\varepsilon^2} \sum_{|\alpha|\leq s-1}\eta^{|\alpha|}\parallel V^{II}\parallel^2_{|\alpha|+1} \leq \frac{c_1}{2\varepsilon^2}\sum_{|\alpha|\leq s}\eta^{|\alpha|}\parallel V_\alpha^{II} \parallel^2
	\end{aligned}
\end{equation*}
and
\begin{equation*}
	\begin{aligned}
		\frac{c_1 \eta^s}{ 2\varepsilon^2}\parallel V^{II}\parallel^2_{s} \leq \frac{c_1}{2\varepsilon^2}\sum_{|\alpha|\leq s}\eta^{|\alpha|}\parallel V_\alpha^{II} \parallel^2.
	\end{aligned}
\end{equation*}
Therefore 
\begin{equation*}
	\begin{aligned}
		&\frac{\mathrm{d} }{\mathrm{d} t} \sum_{|\alpha|\leq s} \eta^{|\alpha|} \int_\Omega e(V_\alpha) \mathrm{d} x  + \frac{c_1 \eta^s}{2\varepsilon^2} \parallel V^{II} \parallel^2_s \\
		&\leq  C(1 +\triangle^2 + B_\varepsilon(t))\parallel V\parallel_s^2  + \frac{C}{\varepsilon^2}\parallel V^{II} \parallel_s\parallel V\parallel_s^2 + C\varepsilon^{2m}.
	\end{aligned}
\end{equation*}
By the Young inequality, we have
\begin{equation*}
	\begin{aligned}
		C\parallel V^{II} \parallel_s \parallel V\parallel_s^2 &\leq \frac{c_1 \eta^s}{4}\parallel V^{II} \parallel_s^2 + \frac{c^2}{c_1 \eta^s}\parallel V\parallel_s^4\\
		&\leq \frac{c_1 \eta^s}{4}\parallel V^{II} \parallel_s^2 + \frac{c^2}{c_1 \eta^s}\varepsilon^2\triangle \parallel V\parallel_s^2.
	\end{aligned}
\end{equation*}
Thus,
\begin{equation*}
	\begin{aligned}
		&\frac{\mathrm{d} }{\mathrm{d} t} \sum_{|\alpha|\leq s} \eta^{|\alpha|} \int_\Omega e(V_\alpha) \mathrm{d} x  + \frac{c_1 \eta^s}{4\varepsilon^2} \parallel V^{II} \parallel^2_s \\
		&\leq C(1 +\triangle^2 + B_\varepsilon(t) + \frac{1}{\eta^{2s}})\parallel V\parallel_s^2 + C\varepsilon^{2m}.
	\end{aligned}
\end{equation*}
Note that
\begin{equation*}
	C^{-1} |V_\alpha|^2 \leq e(V_\alpha) \leq C |V_\alpha|^2.
\end{equation*}
Now we fix $\eta>0$. Integrating this inequality over $[0,T_m]$ and noting that $ \sum_{|\alpha|\leq s} \eta^{|\alpha|} \int_\Omega e(V_\alpha) \mathrm{d} x$ is equivalent to $\parallel V_\alpha(T) \parallel^2$, we use $\parallel V(0)\parallel_s = O(\varepsilon^m)$ to obtain
\begin{equation*}
	\begin{aligned}
		&\parallel V_\alpha(T) \parallel^2 + \frac{1}{\varepsilon^2} \int_0^T \parallel V^{II}(t) \parallel^2_s \mathrm{d} t \leq C T\varepsilon^{2m} + \int_0^T C(1 +\triangle^2 + B_\varepsilon(t))\parallel V(t)\parallel_s^2 \mathrm{d} t.
	\end{aligned}
\end{equation*}
Then
\begin{equation*}
	\begin{aligned}
		\parallel V(T) \parallel_s^2 \leq C T\varepsilon^{2m} + \int_0^T C(1 +\triangle^2 + B_\varepsilon(t))\parallel V(t)\parallel_s^2 \mathrm{d} t.
	\end{aligned}
\end{equation*}
We apply Gronwall's lemma to above equation to get
\begin{equation}\label{4eq:v-err}
	\begin{aligned}
		\parallel V(T) \parallel_s^2 \leq C T_m\varepsilon^{2m} \exp\bigg[ \int_0^T C(1 +\triangle^2 + B_\varepsilon(t))\mathrm{d} t\bigg].
	\end{aligned}
\end{equation}
Since $\parallel V\parallel_s = \varepsilon^2 \triangle$, it follows from above equation that
\begin{equation*}
	\triangle(T)^2 \leq C T_m\varepsilon^{2m-4} \exp\bigg[ \int_0^T C(1 +\triangle^2 + B_\varepsilon(t))\mathrm{d} t\bigg]\equiv \Phi(T).
\end{equation*}
Thus,
\begin{equation*}
	\Phi'(T) = C(1 +\triangle^2 + B_\varepsilon(t)) \Phi(T) \leq C(1 + B_\varepsilon(t)) \Phi(T) + C\Phi^2(T).
\end{equation*}
because of $\int_0^T B_\varepsilon(t) \leq \frac{1}{2\mu}$. Applying the nonlinear Gronwall-type inequality in Lemma  \ref{4lemma:gronwall} to the last inequality yields
\begin{equation*}
	\triangle(T)^2 \leq \sup_{[0,T_m]}\Phi(T)\leq C \exp\bigg[ \int_0^T C(1 + B_\varepsilon(t))\mathrm{d} t\bigg].
\end{equation*}
if we assume $m>2$ and choose $\varepsilon$ so small that $\Phi(0) = C T_m \varepsilon^{2m-4}<\delta$. Then there exists a constant $C$, independent of $\varepsilon$, such that
\begin{equation*}
	\triangle(T)\leq C.
\end{equation*}
for any $T\in [0,\min\{T_\varepsilon, T_m\})$.
Because of \eqref{4eq:v-err}, there exists a constant $K>0$, independent of $\varepsilon$, such that
\begin{equation*}
	\parallel V \parallel_s \leq K \varepsilon^m.
\end{equation*}
This completes the proof of Theorem \ref{4theorem-1}.

\section{Conclusions}\label{5sec}
In this work, we study the stability of the radiation hydrodynamics system, which the HMP$_N$ moment method is adopted for radiative transfer equation. Our results give a rigorous derivation of the widely used macroscopic model in radiation hydrodynamics. This work shows the importance of Yong's structural stability condition in analyzing the compatibility of hyperbolic relaxation systems. For non-relativistic limit, we investigate the singular limit problem for system \eqref{4eq:hy_equ}. The process can be promoted for other quasilinear hyperbolic system with source term.

\section{Appendix}\label{appendix}
In this Appendix, we prove $\tilde{A}^{11}(\tilde{U}_{eq};0)=0$ and $\tilde{A}_{\tilde{u}}^{11}(\tilde{U}_{eq};0)=0$ for Lemma \ref{4lemma1}. 

Take value on the equilibrium state and set $\varepsilon =0$ in $\eqref{4eq:a-tilde}$. We can obtain
\begin{equation}\label{4eq:tilde-a-eq}
	\begin{aligned}
		\tilde{A}(\tilde{U}_{eq};0) = D_{U}\tilde{U}(\tilde{U}_{eq};0)\begin{pmatrix}
			0_{3\times 3}&0_{3\times (N+1)}\\
			0_{(N+1)\times 3}&\tilde{D}^{-1}\tilde{M}\tilde{D}(\tilde{U}_{eq})
		\end{pmatrix}(D_{U}\tilde{U})^{-1}(\tilde{U}_{eq};0),		
	\end{aligned}
\end{equation}
where $\tilde{D}^{-1}\tilde{M}\tilde{D}(\tilde{U}_{eq})\in \mathrm{R}^{(N+1)\times (N+1)}$. 
From the above discussion in Section \ref{4sec}, we know that
\begin{equation*}
	D_{U}\tilde{U}(\tilde{U}_{eq};0) = \begin{pmatrix}
		P_1 & 0\\
		0 & I_{N\times N}
	\end{pmatrix},
\end{equation*}
in which $P_1\in \mathrm{R}^{4\times 4}$ defined in \eqref{2eq:radiation-P}. 
Then $\tilde{A}(\tilde{U}_{eq};0)$ can be rewritten as
\begin{equation*}
	\begin{aligned}
		\tilde{A}(\tilde{U}_{eq};0) =\begin{pmatrix}
		P_1 & 0\\
		0 & I_{N\times N}
	\end{pmatrix}
	\begin{pmatrix}
			0_{3\times 3} & 0_{3\times 1}&0_{3\times N}\\
			0_{1\times 3} & g_1 & g_2\\
			0_{N\times 3} & g_3 & g_4\\
		\end{pmatrix}\begin{pmatrix}
		P_1^{-1} & 0\\
		0 & I_{N\times N}
	\end{pmatrix}, 		
	\end{aligned}
\end{equation*}
where $g_1$ is the first element in the upper left corner of $\tilde{D}^{-1}\tilde{M}\tilde{D}(\tilde{U}_{eq})$, $g_2\in \mathrm{R}^{1\times N} $, $g_3\in \mathrm{R}^{N\times 1}$, $g_4\in \mathrm{R}^{N\times N}$ are corresponding block of matrix $\tilde{D}^{-1}\tilde{M}\tilde{D}(\tilde{U}_{eq})$.

Next, we calculate $g_1$ and the first component of vector $g_3$. Note that $\tilde{D}(\tilde{U}_{eq})$ is diagonal matrix which showed in \eqref{2eq:tilde-d}, $\tilde{M}(\alpha)=\tilde{\Lambda}^{-1}\langle \mu\tilde{\Phi}^{[\alpha]},(\tilde{\Phi}^{[\alpha]})^T\rangle_{\tilde{\mathrm{H}}_N^{[\alpha]}}$ and $\tilde{\Lambda}(\alpha) = \mathrm{diag}(\tilde{\kappa}_{0,0},\tilde{\kappa}_{1,1},\cdots,\tilde{\kappa}_{N,N})$ due to \eqref{2eq:defin-s}. It follows from the definition of the inner product of $\tilde{\mathrm{H}}_N^{[\alpha]}$ \eqref{2eq:tilde-h-dot} that
\begin{equation*}
	\begin{aligned}
		\tilde{M}_{11}(\alpha) &=\tilde{\kappa}^{-1}_{0,0}(\alpha) \int_{-1}^1 \mu \tilde{\Phi}^{[\alpha]}_0(\mu) \tilde{\Phi}^{[\alpha]}_0(\mu)/\tilde{w}^{[\alpha]}(\mu) \mathrm{d} \mu\\
		&=\tilde{\kappa}^{-1}_{0,0}(\alpha)\int_{-1}^1 \mu \tilde{w}^{[\alpha]}(\mu) \mathrm{d} \mu =\tilde{\kappa}^{-1}_{0,0}(\alpha)\tilde{\kappa}_{1, 0}(\alpha),\\
		\tilde{M}_{21}(\alpha) &=\tilde{\kappa}^{-1}_{1,1}(\alpha)\int_{-1}^1 \mu \tilde{\Phi}^{[\alpha]}_1(\mu) \tilde{\Phi}^{[\alpha]}_0(\mu)/\tilde{w}^{[\alpha]}(\mu) \mathrm{d} \mu\\
		&=\tilde{\kappa}^{-1}_{1,1}(\alpha)\int_{-1}^1 \mu \tilde{\phi}^{[\alpha]}_1(\mu) \tilde{w}^{[\alpha]}(\mu)  \mathrm{d} \mu =\tilde{\kappa}^{-1}_{1,1}(\alpha)\tilde{\kappa}_{1, 1}(\alpha) =1.
	\end{aligned}
\end{equation*}
Hence, $g_1 =\tilde{M}_{11}(0)= \tilde{\kappa}^{-1}_{0,0}(0)\tilde{\kappa}_{1, 0}(0) =0$. 
The first components of $g_3$ is $(-2b(\theta)\tilde{M}_{21}\beta_0^{-1}\neq 0$. Thus $g_3$ is not zero. Therefore
\begin{equation}\label{4eq:a-eq-tilde}
	\begin{aligned}
		\tilde{A}(\tilde{U}_{eq};0) &=\begin{pmatrix}
			P_1 & 0\\
			0 & I_{N\times N}
		\end{pmatrix}
		\begin{pmatrix} \begin{pmatrix}
			0_{3\times 3} & 0_{3\times 1}\\
			0_{1\times 3} & 0 \end{pmatrix} & \begin{pmatrix}  0_{3\times N}\\ g_2\end{pmatrix} \\
			\begin{pmatrix} 0_{N\times 3} & g_3 \end{pmatrix} & g_4
		\end{pmatrix} 
		\begin{pmatrix}
		P_1^{-1} & 0\\
		0 & I_{N\times N}
		\end{pmatrix} \\
		&=\begin{pmatrix}
			0_{4\times 4} & P_1 \begin{pmatrix}
			0_{3\times N}\\g_2\end{pmatrix}\\
			\begin{pmatrix}
			0_{N\times 3} & g_3\end{pmatrix}P_1^{-1} & g_4
		\end{pmatrix}.	
	\end{aligned}
\end{equation}
Since $g_3$ is not zero and the matrix $P_1$ is invertible, the rank of the matrix $\begin{pmatrix}
			0_{N\times 3} & g_3\end{pmatrix}P_1^{-1}$ is $1$ .
			Divided the matrix $\tilde{A}$ as follows
\begin{equation*}
	\begin{aligned}
		\tilde{A}(U;\varepsilon) 
		\triangleq \begin{pmatrix}
			\tilde{A}^{11}(\tilde{U};\varepsilon) & \tilde{A}^{12}(\tilde{U};\varepsilon)\\
			\tilde{A}^{21}(\tilde{U};\varepsilon) & \tilde{A}^{22}(\tilde{U};\varepsilon)
		\end{pmatrix},
	\end{aligned}
\end{equation*}
where $\tilde{A}^{11}(\tilde{U};\varepsilon) \in \mathrm{R}^{3\times 3}$, $\tilde{A}^{12}(\tilde{U};\varepsilon) \in \mathrm{R}^{3\times (N+1)}$, $\tilde{A}^{21}(\tilde{U};\varepsilon) \in \mathrm{R}^{(N+1)\times (3}$, $\tilde{A}^{22}(\tilde{U};\varepsilon) \in \mathrm{R}^{(N+1)\times (N+1)}$. Then it follows from \eqref{4eq:a-eq-tilde} that $\tilde{A}^{11}(\tilde{U}_{eq};0) =0 $ for all $\tilde{U}_{eq} \in \tilde{G}_{eq}$. Meanwhile, $\tilde{A}^{21}(\tilde{U}_{eq};0)$ is the matrix formed by the first three columns of the following $(N+1)\times 4$ matrix
\begin{equation*}
	\begin{pmatrix}
		0_{1\times 4}\\
		\begin{pmatrix}
			0_{N\times 3} & g_3\end{pmatrix}P_1^{-1}
	\end{pmatrix}
\end{equation*}
Thus, $\tilde{A}^{21}(\tilde{U}_{eq};0)$ is not full-rank matrix. 

Furthermore, we analyze $\tilde{A}^{11}_{\tilde{u}}(\tilde{U}_{eq};0)$. Firstly, we show that $\tilde{A}_u^{11}(\tilde{U}_{eq};0 )=0$. For any $u=(\rho, ~\rho v, ~\rho E)$, we have 
\begin{equation}\label{eq:deriva-u-a}
	\begin{aligned}
		\tilde{A}_u(\tilde{U}_{eq};0) &= \partial_{u}(D_{U}\tilde{U}) \begin{pmatrix}
			0&0\\
			0&\tilde{D}^{-1}\tilde{M}\tilde{D}
		\end{pmatrix}(D_{U}\tilde{U})^{-1} \\
		& + D_{U}\tilde{U} \begin{pmatrix}
			0&0\\
			0&\partial_{u} (\tilde{D}^{-1}\tilde{M}\tilde{D})
		\end{pmatrix}(D_{U}\tilde{U})^{-1} + D_{U}\tilde{U}\begin{pmatrix}
			0&0\\
			0&\tilde{D}^{-1}\tilde{M}\tilde{D}
		\end{pmatrix}\partial_{u}(D_{U}\tilde{U})^{-1}.
	\end{aligned}
\end{equation}
From the expression of $D_{U}\tilde{U}$ in \eqref{4eq:D_U_tilde{U}} and $\theta = \theta(u)$, we know that 
\begin{equation*}
	\partial_{u}(D_{U}\tilde{U})(\tilde{U}_{eq};0) = \begin{pmatrix}
		0_{3\times 3} & 0\\
		Y_1 & 0_{(N+1)\times (N+1)}
	\end{pmatrix}
\end{equation*}
with $Y_1$ is a non-zero matrix in $\mathrm{R}^{(N+1)\times 3}$. Therefore, we have 
\begin{equation*}
	\partial_{u}(D_{U}\tilde{U})(\tilde{U}_{eq};0) \begin{pmatrix}
			0&0\\
			0&\tilde{D}^{-1}\tilde{M}\tilde{D}(\tilde{U}_{eq};0)
		\end{pmatrix} = \begin{pmatrix}
		0_{3\times 3} & 0\\
		Y_1 & 0_{(N+1)\times (N+1)}
	\end{pmatrix}\begin{pmatrix}
			0&0\\
			0&\tilde{D}^{-1}\tilde{M}\tilde{D}(\tilde{U}_{eq};0)
		\end{pmatrix} = 0
\end{equation*}
In the second term in \eqref{eq:deriva-u-a}, since $\tilde{D}^{-1}\tilde{M}\tilde{D}$ is only depend on $w$, so $\partial_{u} (\tilde{D}^{-1}\tilde{M}\tilde{D})=0$, which yields the second term vanish. From above discussion, we know that
\begin{equation*}
	\begin{aligned}
		D_{U}\tilde{U}(\tilde{U}_{eq};0)\begin{pmatrix}
			0&0\\
			0&\tilde{D}^{-1}\tilde{M}\tilde{D}(\tilde{U}_{eq};0)
		\end{pmatrix} = \begin{pmatrix}
			0_{4\times 4} & P_1 \begin{pmatrix}
			0_{3\times N}\\g_2\end{pmatrix}\\
			\begin{pmatrix}
			0_{N\times 3}&g_3\end{pmatrix} & g_4
		\end{pmatrix}.
	\end{aligned}
\end{equation*}
A tedious calculation shows that the matrix of inverse transformation is
\begin{equation}\label{inver-trans}
	\begin{aligned}
		&(D_U\tilde{U})^{-1}(\tilde{U};0)\\
		&= \begin{pmatrix}
	 		\begin{pmatrix}
				1 & 0 & 0 & 0 & 0\\
				0 & 1 & 0 & 0 & 0 \\
				-\frac{b'\theta_{\rho}}{1 + b'\theta_{\rho E}} & -\frac{b'\theta_{\rho v}}{1 + b'\theta_{\rho E}} & \frac{1}{1 + b'\theta_{\rho E}}  & -\frac{1}{1 + b'\theta_{\rho E}}  & 0\\
				-\frac{b'\theta_{\rho}}{(1 + b'\theta_{\rho E})\kappa_{0,0}} & -\frac{b'\theta_{\rho v}}{(1 + b'\theta_{\rho E})\kappa_{0,0}} & -\frac{b'\theta_{\rho E}}{(1 + b'\theta_{\rho E})\kappa_{0,0}} & \frac{1}{(1 + b'\theta_{\rho E})\kappa_{0,0}} & -\frac{\kappa'_{0,0}f_0}{\kappa_{0,0}}\\
				0 & 0 & 0 & 0 & 1
			\end{pmatrix}
			& 0_{5\times(N-1)}\\
			0_{(N-1)\times 5} & I_{(N-1)\times (N-1)}
		\end{pmatrix}.
	\end{aligned}
\end{equation}
Thus we can obtain 
\begin{equation*}
	\partial_{u}(D_{U}\tilde{U})^{-1}(\tilde{U}_{eq};0) = \begin{pmatrix}
		Y_2 & 0_{4\times N} \\
		0_{N\times 4} & 0_{N\times N}
	\end{pmatrix}
\end{equation*}
with $Y_2 \in \mathrm{R}^{4 \times 4}$. So the third term of \eqref{eq:deriva-u-a} can be rewritten as
\begin{equation*}
	\begin{aligned}
		&D_{U}\tilde{U}(\tilde{U}_{eq};0)\begin{pmatrix}
			0&0\\
			0&\tilde{D}^{-1}\tilde{M}\tilde{D}(\tilde{U}_{eq};0)
		\end{pmatrix}\partial_{u}(D_{U}\tilde{U})^{-1}(\tilde{U}_{eq};0)\\
		& = \begin{pmatrix}
			0_{4\times 4} & P_1 \begin{pmatrix}
			0_{3\times N}\\g_2\end{pmatrix}\\
			\begin{pmatrix}
			0_{N\times 3}&g_3\end{pmatrix} & g_4
		\end{pmatrix}\begin{pmatrix}
		Y_2 & 0_{4\times N} \\
		0_{N\times 4} & 0_{N\times N}
	\end{pmatrix} = \begin{pmatrix}
		0_{4\times 4} & 0_{4\times N}\\
		Y_3 & 0_{N\times N}
	\end{pmatrix}
	\end{aligned}
\end{equation*}
with $Y_3$ is corresponding matrix.
Thus that the $3\times 3$ block in the upper left corner of matrix $\tilde{A}_u(\tilde{U}_{eq};0)$ is zero. This means $\tilde{A}_u^{11}(\tilde{U}_{eq};0 )=0$ for any $u=(\rho, ~\rho v, ~\rho E)$. 

Since $\tilde{u} = \tilde{u}(u, w)$, we have
\begin{equation*}
	\begin{aligned}
		\partial_{\tilde{u}} \tilde{A}^{11}(\tilde{U}_{eq};0) &= \partial_{u} \tilde{A}^{11}(\tilde{U}_{eq};0) \frac{\partial u}{\partial \tilde{u}} + \partial_{w}\tilde{A}^{11}(\tilde{U}_{eq};0) \frac{\partial w}{\partial \tilde{u}}\\
		&=\partial_{w}\tilde{A}^{11}(\tilde{U}_{eq};0) \frac{\partial w}{\partial \tilde{u}}.
	\end{aligned}
\end{equation*}
According to expression of $(D_U\tilde{U})^{-1}$ in \eqref{inver-trans}, we know $\frac{\partial w}{\partial \tilde{u}}$ are zero except for $\frac{\partial f_0}{\partial\tilde{u}}$.
Thus
\begin{equation*}
	\begin{aligned}
		\partial_{\tilde{u}} \tilde{A}^{11}(\tilde{U}_{eq};0) = \partial_{w}\tilde{A}^{11}(\tilde{U}_{eq};0) \frac{\partial w}{\partial \tilde{u}} = \partial_{f_0}\tilde{A}^{11}(\tilde{U}_{eq};0) \frac{\partial f_0}{\partial \tilde{u}}.
	\end{aligned}
\end{equation*}
Thanks to the equations of hydrodynamical variables \eqref{4eq:rhov-equ}, we set
\begin{equation*}
	\tilde{F}(\tilde{U};\varepsilon)=\bigg(\varepsilon \rho v, \quad \varepsilon (\rho v^2 +  p +  \kappa_{2,2}f_2 + \kappa_{2,0}f_0), \quad \varepsilon(\rho E v + p v) + \kappa_{1,0}f_0\bigg)^T,
\end{equation*}
with $\kappa_{2,2}$, $\kappa_{2,0}$, $\kappa_{1,0}$ are function of $\alpha$ such that $\tilde{A}^{11}(\tilde{U};\varepsilon)=\partial_{\tilde{u}}\tilde{F}(\tilde{U};\varepsilon)$. Thus
\begin{equation*}
	\begin{aligned}
		\partial_{f_0}\tilde{A}^{11}(\tilde{U};\varepsilon)&= \partial_{f_0}\big(\partial_{\tilde{u}}\tilde{F}(\tilde{U};\varepsilon)\big)=\partial_{\tilde{u}}\partial_{f_0}\tilde{F}(\tilde{U};\varepsilon) = \partial_{\tilde{u}}\big(0,~ \varepsilon \kappa_{2,0}(\alpha),~ \kappa_{1,0}(\alpha)\big)^T.
	\end{aligned}
\end{equation*}
And since $\frac{\partial w}{\partial \tilde{u}}$ are zero except for $\frac{\partial f_0}{\partial\tilde{u}}$, we have
\begin{equation*}
	\begin{aligned}
		\partial_{\tilde{u}}\kappa_{1,0}(\alpha) &= \partial_{u}\kappa_{1,0}(\alpha)\frac{\partial u}{\partial \tilde{u}} + \partial_{w}\kappa_{1,0}(\alpha)\frac{\partial w}{\partial \tilde{u}} = \partial_{f_0}\kappa_{1,0}(\alpha)\frac{\partial f_0}{\partial \tilde{u}} = 0
	\end{aligned}
\end{equation*}
Similarly $\partial_{\tilde{u}}\kappa_{2,0}(\alpha) = 0$.
Therefore $\partial_{\tilde{u}} \tilde{A}^{11}(\tilde{U}_{eq};0)=0$.
	
In conclude, on all equilibrium state $\tilde{U}_{eq}$, we see that
\begin{equation*}
	\tilde{A}^{11}(\tilde{U}_{eq};0) =0,\qquad \partial_{\tilde{u}} \tilde{A}^{11}(\tilde{U}_{eq};0)=0.
\end{equation*}
Moreover, $\tilde{A}^{21}(\tilde{U}_{eq};0)$ is not full--rank matrix.


\end{document}